\newtheorem{theo}{Theorem}[section]
\newtheorem{lem}[theo]{Lemma}
\newtheorem{prop}[theo]{Proposition}
\newtheorem{cor}[theo]{Corollary}
\renewcommand{\P}{{\mathbb P}}
\newcommand{\e}{{\text{e}}}
\newcommand{\N}{{\mathbb N}}
\newcommand{\Z}{{\mathbb Z}}
\newcommand{\R}{{\mathbb R}}
\newcommand{\T}{{\mathbb T}}
\newcommand{\ba}{{\bf a}}
\newcommand{\aveN}{{\frac1N\sum_{k=1}^N}}
\newcommand{\limaveN}{\lim_{N\to\infty} \aveN}
\theoremstyle{remark}
\newtheorem*{ex}{\bf Example}
\newtheorem*{remark}{\bf Remark}
\begin{document}

\baselineskip=14pt

\title{On modulated ergodic theorems}

\author{Tanja Eisner}
\address{Institute of Mathematics, University of Leipzig, Germany}
\email{tatjana.eisner@math.uni-leipzig.de}

\author{Michael Lin}
\address{Department of Mathematics, Ben-Gurion University, Beer-Sheva, Israel}
\email{lin@math.bgu.ac.il}

\subjclass[2010]{Primary: 47A35, 47B38, 37A30; Secondary: 46E30}
\keywords{weakly almost periodic operators, modulated ergodic theorems, contractions,
Hartman sequences, averaging along the prime numbers, doubly power-bounded operators on $L^r$}

\begin{abstract}
Let $T$ be a weakly almost periodic (WAP) linear operator on a Banach space $X$. 
A sequence of scalars $(a_n)_{n\ge 1}$ {\it modulates}  $T$ on  $Y \subset X$ 
if $\frac1n\sum_{k=1}^n a_kT^k x$ converges in norm for every $x \in Y$. 
We obtain a sufficient condition for $(a_n)$ to modulate every WAP operator on the space of its
flight vectors, a necessary and sufficient condition for (weakly) modulating every WAP operator 
$T$ on the space of its (weakly) stable vectors, and sufficient conditions for modulating
every contraction on a Hilbert space on the space of its weakly stable vectors. 
We study as an example modulation by the modified von Mangoldt function 
$\Lambda'(n):=\log n1_\mathbb P(n)$ (where $\mathbb P =(p_k)_{k\ge 1}$ is the sequence of primes),
and show that, as in the scalar case, convergence of the corresponding modulated averages is 
equivalent to convergence of the averages along the primes $\frac1n\sum_{k=1}^n T^{p_k}x$. 
We then prove that for any contraction  $T$ on a Hilbert space $H$ and $x \in H$, 
and also for every invertible $T$ with $\sup_{n \in \mathbb Z} \|T^n\| <\infty$ on 
$L^r(\Omega,\mu)$ ($1<r< \infty$) and $f \in L^r$, the averages along the primes converge.

\end{abstract}

\dedicatory{Dedicated to Simeon Reich on the occasion of his 70th birthday}

\maketitle

\section{Introduction}

Let $T$ be a power-bounded  (linear) operator on a (real or complex) reflexive Banach space $X$.
The mean ergodic theorem, proved independently by  Lorch, Yosida and Kakutani, says that for
every $x \in X$ the averages $\frac1n \sum_{k=1}^n T^kx$ converge in norm.
\medskip

{\bf Definition.} A bounded linear operator $T$ on a Banach space $X$ is called {\it (weakly)
almost periodic} if for every $x \in X$ the orbit $\{T^nx\}_{n\ge 0}$ is (weakly) conditionally 
compact. A weakly almost periodic operator is necessarily power-bounded. Every power-bounded $T$ on 
a reflexive Banach space is weakly almost periodic.
\medskip

The statement of the mean ergodic theorem by Kakutani and by Yosida, and their proofs, show the
norm convergence of the averages $\frac1n \sum_{k=1}^n T^kx$ for every $x \in X$ when $T$
is weakly almost periodic. The convergence induces a decomposition of the space ({\it the 
ergodic decomposition})
$$
X=F(T)\oplus \overline{(I-T)X}
$$
where $F(T)$ is the space of fixed points of $T$. The limit operator $E(T)$ is the projection onto
$F(T)$ corresponding to the ergodic decomposition.
\medskip

In this article we are interested in sequences of scalars $\ba = (a_k)_{k>0}$ which yield 
{\it modulated norm ergodic theorems}, the strong convergence of the "modulated" averages
$\frac1n \sum_{k=1}^n a_kT^kx$, for every weakly almost periodic $T$ and $x \in X$. 
Some general results are obtained in Section 2. In Section 3 we study modulation of 
flight vectors and weakly stable vectors of contractions in Hilbert spaces. 
In Section 4 we treat as an example the modulation by the the modified von Mangoldt function 
$\Lambda'(n)=\log n 1_\mathbb P(n)$, where $\mathbb P$ is the set of prime numbers.
In Section 5 we prove convergence of averages along the primes of doubly power-bounded operators on
$L^r$, $1<r<\infty$. In Section 6 we list some problems.
\medskip

\section{Modulated ergodic theorems for weakly almost periodic operators}

{\bf Definition.} A complex sequence $\ba:=(a_k)_{k>0}$ is called {\it Hartman almost periodic} 
(Hartman for short) if for every $|\lambda|=1$ the limit 
$c(\lambda):= \lim_n \frac 1n \sum_{k=1}^n  a_k\bar\lambda^k$ exists; $c(\lambda)$ is called
the Fourier (or Fourier-Bohr) coefficient function.
By Kahane \cite{Ka}, if $\ba$ is Hartman, then $\{\lambda: c(\lambda) \ne 0\}$ is countable.
\medskip

{\bf Definitions.} Let $T$ be weakly almost periodic on $X$. A vector $x \in X$ is called a 
{\it flight vector}  (or {\it weakly almost stable} in \cite{EFHN}) if $0$ is in the weak 
closure of its orbit; by the Krein-Shmulian theorem, this is equivalent to 
$T^{n_j}x\overset{w}\to 0$ for some subsequence $(n_j)$. The vector $x$ is called {\it (weakly)
stable} (for $T$) if $T^n x \to 0$ (weakly). The operator $T$ is called {\it (weakly) stable}
if every $x \in X$ is (weakly) stable for $T$.

Weakly almost periodic operators on a {\it complex} Banach space are characterized by the 
Jacobs-deLeeuw-Glicksberg decomposition \cite[pp. 105-106]{Kr}, \cite[Section 16.3 ]{EFHN}
\begin{equation} \label{jdg}
X=clm \{ y: Ty=\lambda y \text{ for some } \lambda \in \mathbb T\} \oplus 
\{z:  T^{n_j}z \overset{w} \to 0 \text{ for some } \{n_j\} \}.
\end{equation}
Here $clm$ means the closed linear manifold spanned.

\begin{lem} \label{product}
Let $\phi(x): \mathbb R^+ \longrightarrow \mathbb R^+$ be a strictly increasing function with
$\lim_{x\to\infty} \phi(x) = \infty$ and let $\ba$ be a complex sequence satisfying
\begin{equation} \label{Wphi}
s := \sup_n \frac1n \sum_{k=1}^n |a_k| \phi(|a_k|) < \infty.
\end{equation}
Then for every bounded complex sequence $(b_k)_{k>0}$ with $\frac1n\sum_{k=1}^n |b_k| \to 0$
we have 
\begin{equation} \label{limit}
\frac1n \sum_{k=1}^n |a_kb_k| \to 0 \quad \text{as } n\to \infty.
\end{equation}
\end{lem}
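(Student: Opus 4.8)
The plan is to run the classical truncation argument underlying the de la Vallée-Poussin uniform integrability criterion. Set $B := \sup_k |b_k| < \infty$, fix a threshold $M > 0$, and for each $n$ split the index set $\{1,\dots,n\}$ into the indices with $|a_k| \le M$ and those with $|a_k| > M$, estimating the two corresponding partial sums of $\frac1n\sum_{k=1}^n|a_kb_k|$ separately.

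On the indices with $|a_k|\le M$ I would simply use $|a_kb_k|\le M|b_k|$, so that partial sum is at most $M\cdot\frac1n\sum_{k=1}^n|b_k|$, which tends to $0$ as $n\to\infty$ for the fixed $M$ by hypothesis. On the indices with $|a_k|>M$, the key observation is that strict monotonicity of $\phi$ gives $\phi(|a_k|)>\phi(M)>0$, hence $|a_k|<\frac{1}{\phi(M)}|a_k|\phi(|a_k|)$; combining this with $|b_k|\le B$ and \eqref{Wphi}, the partial sum over these indices is at most $\frac{B}{\phi(M)}\cdot\frac1n\sum_{k=1}^n|a_k|\phi(|a_k|)\le \frac{Bs}{\phi(M)}$.

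Adding the two estimates and letting $n\to\infty$ yields $\limsup_n\frac1n\sum_{k=1}^n|a_kb_k|\le Bs/\phi(M)$ for every $M>0$. Then I would let $M\to\infty$: since $\phi$ is strictly increasing with $\phi(x)\to\infty$, we have $\phi(M)\to\infty$, so the right-hand side tends to $0$, which is exactly \eqref{limit}. (Indices with $a_k=0$ contribute nothing on either side, so the possibly undefined value of $\phi$ at $0$ is irrelevant.)

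I do not anticipate a serious obstacle. The only point that needs care is the estimate on the "large" part: one must trade one factor $|a_k|$ for $|a_k|\phi(|a_k|)/\phi(M)$ so as to invoke the uniform bound $s$ coming from \eqref{Wphi}; once that step is identified, the rest of the argument is routine.
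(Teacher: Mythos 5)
Your proof is correct and is essentially the paper's own argument: the same split of $\{1,\dots,n\}$ at a threshold, bounding the small part by $M\cdot\frac1n\sum_{k=1}^n|b_k|$ and the large part by trading $|a_k|\le |a_k|\phi(|a_k|)/\phi(M)$ against the uniform bound $s$ from \eqref{Wphi}. The only (cosmetic) difference is that you take a $\limsup$ and let $M\to\infty$ at the end, whereas the paper fixes $\epsilon$ and chooses the threshold $C$ with $s/\phi(C)<\epsilon$ up front.
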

\begin{proof} When $\ba$ is bounded the lemma is trivial.
Without loss of generality $|b_k| \le 1$ for every $k$. 
For $\epsilon >0$ take  $C >0$ with $\frac{s}{\phi(C)} < \epsilon$, and let $N$ be such that 
for $n >N$ we have $\frac1n\sum_{k=1}^n|b_k| < \frac{\epsilon}{C}$. Then for $n>N$ we have
\begin{equation} \label{estimate}
\frac1n \sum_{k=1}^n |a_kb_k| =
\frac1n \underset{|a_k|  \le C}{\sum_{k=1}^n}|a_kb_k| +
\frac1n \underset{|a_k| >C}{\sum_{k=1}^n}|a_k|\phi(|a_k|) \cdot \frac{|b_k|}{\phi(|a_k|)} 
< 2\epsilon.
\end{equation}
\end{proof}

\noindent
{\bf Remarks.} 1. If $\ba$ satisfies (\ref{Wphi}), then $\sup_n \frac1n\sum_{k=1}^n |a_k| < \infty$,
since 
\begin{equation} \label{w-1}
\frac1n \sum_{k=1}^n |a_k| \le 
 \frac1n \underset{|a_k|  \le 1}{\sum_{k=1}^n}|a_k| + 
\frac1n \underset{|a_k| >1}{\sum_{k=1}^n}|a_k| \cdot \frac{\phi(|a_k|)}{\phi(1)} 
\le 1+\frac{s}{\phi(1)}.
\end{equation}
\smallskip

2. If $\ba$ satisfies (\ref{Wphi}), then $|a_n|=o(n)$. If not, then for some $c>0$ there exists 
$(n_j)$ increasing with $|a_{n_j}| \ge cn_j$. Then 
$$
\frac1{n_j} \sum_{k=1}^{n_j} |a_k|\cdot\phi(|a_k|) \ge \frac1{n_j}|a_{n_j}| \cdot \phi(|a_{n_j}|) \ge
c \cdot \phi(cn_j) \to \infty,
$$ 
contradicting (\ref{Wphi}).

\begin{theo} \label{flight}
Let $T$ be a weakly almost periodic operator on $X$ and let $\ba$  (real if $X$ is over $\mathbb R$)
satisfy (\ref{Wphi}).  Then for every flight vector $x$ we have 
$ \lim_n \|\frac1n\sum_{k=1}^n a_kT^k x\| =0$.
\end{theo}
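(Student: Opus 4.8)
The plan is to reduce the general flight vector to the simplest possible case and then exploit the Jacobs--deLeeuw--Glicksberg structure together with Lemma \ref{product}. First I would observe that the set of vectors $x$ for which $\|\frac1n\sum_{k=1}^n a_kT^kx\|\to 0$ is a closed linear subspace of $X$: it is plainly linear, and closedness follows because $\sup_n\frac1n\sum_{k=1}^n|a_k|<\infty$ by Remark~1 after Lemma~\ref{product}, so the operators $A_n:=\frac1n\sum_{k=1}^n a_kT^k$ are uniformly bounded (using power-boundedness of $T$). Hence it suffices to verify the conclusion on a spanning set of the flight-vector subspace, i.e.\ on the second summand in \eqref{jdg}.

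Next I would treat the prototype flight vector, a ``weakly mixing'' type vector, by first passing through the even more special case of a vector $z$ with $T^{n_j}z\overset{w}\to 0$ along some sequence. Actually the cleanest route is: by \eqref{jdg} the flight subspace is the closure of the span of vectors $z$ admitting a subsequence with $T^{n_j}z\rightharpoonup 0$; but to get norm convergence of the modulated averages I expect one really wants the stronger fact that for a flight vector the Cesàro averages of the \emph{norms} $\frac1n\sum_{k=1}^n\|\varphi(T^kz)\|$... — more precisely, I would use that for $T$ WAP and $z$ a flight vector, for every $x^*\in X^*$ one has $\frac1n\sum_{k=1}^n|\langle x^*,T^kz\rangle|\to 0$. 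This is the standard fact that flight vectors are exactly the vectors whose orbit has density-zero return near any weak neighborhood of $0$; equivalently $b_k:=\langle x^*,T^kz\rangle$ is a bounded sequence with $\frac1n\sum_{k=1}^n|b_k|\to 0$. Then Lemma~\ref{product} applies directly with this $(b_k)$ and the given $\ba,\phi$, yielding $\frac1n\sum_{k=1}^n|a_k\langle x^*,T^kz\rangle|\to 0$, hence $\langle x^*,A_nz\rangle\to 0$ for every $x^*$, i.e.\ $A_nz\rightharpoonup 0$.

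To upgrade weak convergence $A_nz\rightharpoonup 0$ to norm convergence $\|A_nz\|\to 0$, I would use weak almost periodicity once more: the orbit of $z$ is weakly conditionally compact, and $|a_n|=o(n)$ (Remark~2), so a summation-by-parts / Abel argument shows $A_nz - A_{n-1}z\to 0$ in norm together with the uniform bound; combined with relative weak compactness of $\{A_nz\}$ one gets that every weak limit point is $0$, and then a standard argument (the averages form a relatively weakly compact set whose only weak cluster point is $0$, so they converge weakly to $0$, and for WAP operators one can promote this to norm convergence on the flight part because there the mean ergodic projection is $0$ and the relevant subspace is spanned by unimodular-eigenvalue-free directions) gives $\|A_nz\|\to 0$. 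Finally, extending from the spanning vectors $z$ to all flight vectors by the closedness established in the first step completes the proof.

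The main obstacle I anticipate is the passage from the scalar estimate $\frac1n\sum_{k=1}^n|a_k\langle x^*,T^kz\rangle|\to 0$ (which is immediate from Lemma~\ref{product}) to a genuine \emph{norm} statement $\|A_nz\|\to 0$: weak convergence of $A_nz$ to $0$ is easy, but norm convergence requires using the WAP structure in an essential way — either via the JdLG splitting to reduce to vectors on which $T$ is ``weakly stable in density'', or via a compactness argument in the weak operator topology showing the closure of $\{T^n\}$ is a semitopological semigroup on which the relevant averages converge. I would organize the write-up so that Lemma~\ref{product} does the arithmetic and the JdLG decomposition \eqref{jdg} does the functional-analytic lifting, keeping the two cleanly separated.
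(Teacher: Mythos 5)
There is a genuine gap, and it sits exactly where you yourself locate the ``main obstacle'': the passage from the scalar estimates to the norm statement. What you actually prove is that for each fixed $x^*$ one has $\frac1n\sum_{k=1}^n|a_k\langle x^*,T^kz\rangle|\to 0$, i.e.\ weak convergence $A_nz\rightharpoonup 0$. The proposed upgrade to $\|A_nz\|\to 0$ -- uniform boundedness of $A_n$, $|a_n|=o(n)$ giving $\|A_nz-A_{n-1}z\|\to 0$, relative weak compactness of the orbit, and ``a standard argument'' -- is not an argument: weak convergence to $0$ together with bounded norms and vanishing increments does not imply norm convergence (think of a unit vector drifting slowly through an orthonormal basis), and nothing in your sketch uses the hypothesis (\ref{Wphi}) beyond its weak consequences. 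That this cannot work with only the ingredients you list is shown by the paper's own example in Section~3 of a sequence with $\sup_n\frac1n\sum_{k=1}^n|a_k|<\infty$ and $a_n=o(n)$ which fails to modulate flight vectors of a unitary operator: whatever mechanism forces norm convergence must use the full strength of (\ref{Wphi}) in the norm estimate itself, not merely $a_n=o(n)$ plus WAP compactness. (A further, smaller issue: your reduction leans on the JdLG decomposition \eqref{jdg}, which is stated for complex spaces, whereas the theorem is also claimed for real $X$.)

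The missing idea is the Jones--Lin characterization \cite{JL1} used in the paper: for a weakly almost periodic $T$, $x$ is a flight vector if and only if the convergence is \emph{uniform over the dual unit ball}, $\sup_{\|x^*\|\le 1}\frac1n\sum_{k=1}^n|\langle x^*,T^kx\rangle|\to 0$. Since $\|\frac1n\sum_{k=1}^n a_kT^kx\|=\sup_{\|x^*\|\le1}|\frac1n\sum_{k=1}^n a_k\langle x^*,T^kx\rangle|$, and since the estimate in the proof of Lemma~\ref{product} is uniform in all sequences $(b_k)$ with $|b_k|\le 1$ and $\frac1n\sum_{k=1}^n|b_k|<\epsilon/C$, this uniformity converts the scalar bound directly into the norm statement, with no weak-to-norm promotion needed. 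Without replacing your pointwise-in-$x^*$ step by this uniform statement (or an equivalent device), the proof does not go through.
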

\begin{proof} 
By Jones and Lin \cite{JL1}, $x$ is a flight vector  for $T$ if and only if 
\begin{equation} \label{uniform}
\sup_{\|x^*\| \le 1} \frac1n \sum_{k=1}^n |\langle x^*,T^kx \rangle| 
\underset{n \to\infty} \longrightarrow 0.
\end{equation}
We may assume $\|x\|=1$, and for $x^* \in X^*$ we put $b_k(x^*) = |\langle x^*,T^kx\rangle |$. Then
$$
\|\frac1n\sum_{k=1}^n a_k T^k x\| =
\sup_{\|x^*\|\le 1} |\frac1n\sum_{k=1}^n a_k \langle x^*, T^k x\rangle | 
\le \sup_{\|x^*\|\le 1} \frac1n\sum_{k=1}^n | a_k b_k(x^*)|.
$$
Since (\ref{uniform}) yields that for $n >N$ we have 
$\sup_{\|x^*\| \le 1} \frac1n\sum_{k=1}^n|b_k(x^*)| < \frac{\epsilon}{C}$, 
the proof of Lemma \ref{product} yields that
$ \sup_{\|x^*\|\le 1} \frac1n\sum_{k=1}^n | a_k b_k(x^*)| \to 0$, which proves the theorem.
\end{proof}

\noindent
{\bf Remark.} The special case of the theorem, with $\phi(x) =x^{p-1}$ for some $p>1$, was proved
differently (using H\"older's inequality) by \c C\"omez, Lin and Olsen \cite[Theorem 4.1]{CLO}.

\begin{cor} \label{complex}
Let $\ba$ be a Hartman sequence satisfying (\ref{Wphi}). Then for every weakly almost periodic
operator $T$ on a complex Banach space $X$ and every $x \in X$ 
the sequence $\frac1n\sum_{k=1}^n a_kT^kx$ converges in norm.
\end{cor}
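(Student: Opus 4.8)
The plan is to combine Theorem~\ref{flight} with the Jacobs--deLeeuw--Glicksberg decomposition (\ref{jdg}) and a Banach--Steinhaus density argument. First I would observe that the modulated averaging operators $S_n:=\frac1n\sum_{k=1}^n a_kT^k$ are uniformly bounded: since $T$ is weakly almost periodic it is power-bounded, say $\|T^k\|\le M$ for all $k$, while Remark~1 (inequality (\ref{w-1})) gives $\sup_n \frac1n\sum_{k=1}^n|a_k|\le 1+s/\phi(1)$, whence $\|S_n\|\le M(1+s/\phi(1))$ for every $n$. Consequently the set $Y:=\{x\in X:\ (S_nx)\ \text{converges in norm}\}$ is a closed linear subspace of $X$, and it suffices to prove that $Y$ is dense.

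Next I would handle the two summands of (\ref{jdg}) separately. The second summand is precisely the space of flight vectors, so Theorem~\ref{flight} gives $S_nz\to 0$ for every $z$ in it; hence this summand is contained in $Y$. For the first summand, $clm\{y:\ Ty=\lambda y\ \text{for some}\ \lambda\in\mathbb T\}$, it is enough to check that each eigenvector lies in $Y$, because $Y$ is closed and linear. If $Ty=\lambda y$ with $|\lambda|=1$, then
$$
S_ny=\Big(\tfrac1n\sum_{k=1}^n a_k\lambda^k\Big)y=\Big(\tfrac1n\sum_{k=1}^n a_k\,\overline{(\bar\lambda)}^{\,k}\Big)y\ \longrightarrow\ c(\bar\lambda)\,y,
$$
since $\ba$ is Hartman. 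Thus every eigenvector belongs to $Y$, and therefore so does the whole first summand.

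Finally, $Y$ contains both summands of the direct-sum decomposition (\ref{jdg}), hence contains their linear span, which is dense in $X$; being closed, $Y=X$, which is the assertion. (If desired, one can also identify the limit as $\sum_\lambda c(\bar\lambda)P_\lambda x$, where the $P_\lambda$ are the eigenprojections associated with (\ref{jdg}), but this is not needed for the statement.) I do not expect a genuine obstacle here: the substantive analytic work is already contained in Lemma~\ref{product} and Theorem~\ref{flight}, and what remains is the routine Banach--Steinhaus packaging together with the elementary computation on eigenvectors; the only point to be careful about is invoking uniform boundedness of $(S_n)$ to pass the convergence from eigenvectors to their closed linear span.
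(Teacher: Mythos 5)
Your argument is correct and follows essentially the same route as the paper: the Jacobs--deLeeuw--Glicksberg decomposition (\ref{jdg}), Theorem~\ref{flight} for the flight vectors, the Hartman property for eigenvectors, and uniform boundedness of the averaging operators (via (\ref{w-1}) and power-boundedness) to pass to the closed linear span. The paper's proof is just a compressed version of exactly this reasoning, so nothing further is needed.
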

\begin{proof}
We use the decomposition (\ref{jdg}). Theorem \ref{flight} applies to the space of flight vectors. 
For the eigenvectors we have the convergence since $\ba$ is Hartman, and on the closed space 
they generate convergence holds since $\sup_n \|\frac1n\sum_{k=1}^n a_kT^k\| \le
\sup_n \frac1n\sum_{k=1}^n|a_k| $, which is finite by the above remark.
\end{proof}

\noindent
{\bf Remark.} 
%
It was proved by Lin, Olsen and Tempelman \cite[Proposition 1.4]{LOT} that 
for every almost periodic $T$ on a complex Banach space $X$ the sequence $\frac1n\sum_{k=1}^n a_kT^kx$ 
converges in norm if and only if $\ba$ is a Hartman sequence with 
$\sup_n \frac1n\sum_{k=1}^n |a_k| < \infty$.
It was shown by Berend, Lin, Rosenblatt and Tempelman \cite[Example 2.7]{BLRT} that these necessary
conditions on $\ba$ are not sufficient for the convergence if $T$ is only {\it weakly} almost periodic.
\smallskip

\noindent
{\bf Definition.} Let $W_1$ be the set of sequences $\ba$ such that 
$\|\ba\|_{W_1}:= \limsup_n \frac1n\sum_{k=1}^n |a_k|$ is finite. Then $\|\cdot\|_{W_1}$ is a semi-norm.
We denote by $\mathcal A$ the set of sequences $\ba$ which satisfy (\ref{Wphi}) for some $\phi$ 
(which by (\ref{w-1}) are in $W_1$), and by $\overline{\mathcal A}^{W_1}$ its closure in the $W_1$-semi-norm.
By Theorem \ref{flight} and \cite[Theorem 1.2]{LOT}, if $\ba \in \overline{\mathcal A}^{W_1}$, 
i.e. there are sequences $\mathbf a^{(j)}$ which converge in the $W_1$-semi-norm to $\ba$ and 
each $\mathbf a^{(j)}$ satisfies (\ref{Wphi}) with some $\phi_j$, 
then $\ba$ modulates the flight vectors of every weakly almost periodic operator.

\begin{lem} \label{W1}
Let $(a_k)_{k\ge 1}$ be a complex sequence. Then the following are equivalent:

(i) $\sup_n \frac1n \sum_{k=1}^n |a_k| < \infty$.

(ii) For every sequence $b_n \to 0$ we have $\frac1n \sum_{k=1}^n |a_kb_k| \to 0$.

(iii) For every sequence $b_n \to 0$ the sequence $\big(\frac1n \sum_{k=1}^n a_kb_k )_n$ is bounded.

(iv) For every bounded sequence $(b_n)_n$ the sequence $\big(\frac1n\sum_{k=1}^n a_kb_k\big)_n$ 
is bounded.
\end{lem}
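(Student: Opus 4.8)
The plan is to prove the cycle of implications $(i)\Rightarrow(ii)\Rightarrow(iii)\Rightarrow(iv)\Rightarrow(i)$, so each step only needs one arrow. Throughout write $A_n := \frac1n\sum_{k=1}^n|a_k|$ and $S := \sup_n A_n$ in the case that $S<\infty$.

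For $(i)\Rightarrow(ii)$: given $b_k\to 0$, fix $\epsilon>0$, choose $M$ with $|b_k|\le\epsilon$ for $k>M$, and split $\frac1n\sum_{k=1}^n|a_kb_k| = \frac1n\sum_{k\le M}|a_kb_k| + \frac1n\sum_{M<k\le n}|a_kb_k|$. The first sum is a fixed finite quantity divided by $n$, hence $\to 0$; the second is bounded by $\epsilon\cdot\frac1n\sum_{k=1}^n|a_k|\le \epsilon S$. This gives $\limsup_n\frac1n\sum_{k=1}^n|a_kb_k|\le \epsilon S$ for every $\epsilon$, hence the limit is $0$. The implication $(ii)\Rightarrow(iii)$ is immediate since $|\frac1n\sum a_kb_k|\le\frac1n\sum|a_kb_k|$, a convergent (hence bounded) sequence; and $(iv)\Rightarrow(iii)$ is trivial since a null sequence is bounded — but I only need $(iii)\Rightarrow(iv)$ for the cycle, so let me instead run $(ii)\Rightarrow(iii)\Rightarrow(iv)\Rightarrow(i)$. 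Actually the cleanest route: $(i)\Rightarrow(ii)\Rightarrow(iii)$ as above (with $(iv)$ following from $(i)$ by the same estimate $|\frac1n\sum_{k=1}^n a_kb_k|\le \|b\|_\infty\cdot A_n\le \|b\|_\infty S$), and then close the loop with $(iii)\Rightarrow(i)$ or $(iv)\Rightarrow(i)$.

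The main work — and the expected obstacle — is the converse direction, showing that a negation of $(i)$ produces a bad sequence $(b_k)$. Suppose $\sup_n A_n = \infty$. I would use the Banach–Steinhaus theorem: consider the linear functionals $L_n$ on $c_0$ (resp. on $\ell^\infty$) defined by $L_n(b) = \frac1n\sum_{k=1}^n a_kb_k$; their norms are $\|L_n\| = \frac1n\sum_{k=1}^n|a_k| = A_n$ (taking $b_k$ of modulus $1$ with $a_kb_k = |a_k|$, truncated to the first $n$ coordinates and zero after, which lies in $c_0$). If $(i)$ fails then $\sup_n\|L_n\|=\infty$, so by the uniform boundedness principle there exists $b\in c_0$ (a Banach space) with $\sup_n|L_n(b)|=\infty$, contradicting $(iii)$; a fortiori this $b$ is bounded, contradicting $(iv)$. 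This simultaneously yields $(iii)\Rightarrow(i)$ and $(iv)\Rightarrow(i)$, closing all equivalences. One has to be slightly careful that the relevant spaces are complete and that the functionals are genuinely bounded for each fixed $n$ (they are, since only finitely many terms appear); a fully elementary alternative, avoiding Baire category, is a direct gliding-hump construction of $b\in c_0$ by choosing blocks of indices on which the partial averages $A_{n_j}$ are large and setting $b_k$ to have modulus tending to $0$ but decaying slowly enough that $\frac1{n_j}\sum_{k=1}^{n_j}|a_kb_k|\not\to 0$. I would present the Banach–Steinhaus argument as the main one and perhaps remark on the explicit construction.
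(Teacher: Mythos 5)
Your proof is correct, and the forward implications $(i)\Rightarrow(ii)$, $(ii)\Rightarrow(iii)$, $(i)\Rightarrow(iv)\Rightarrow(iii)$ coincide with the paper's. The difference is in closing the loop: the paper proves $(iii)\Rightarrow(i)$ by the explicit construction you only sketch as an alternative --- assuming $\frac1{n_j}\sum_{k=1}^{n_j}|a_k|>j$ along an increasing sequence $(n_j)$, it sets $|b_n|=1/\sqrt{j}$ for $n_j\le n<n_{j+1}$ with $\arg b_n=-\arg a_n$, so that $b_n\searrow 0$ while $\bigl|\frac1{n_j}\sum_{k=1}^{n_j}a_kb_k\bigr|=\frac1{n_j}\sum_{k=1}^{n_j}|a_kb_k|\ge\sqrt{j}$ --- whereas you invoke Banach--Steinhaus for the functionals $L_n(b)=\frac1n\sum_{k=1}^n a_kb_k$ on $c_0$, using $\|L_n\|_{c_0^*}=\frac1n\sum_{k=1}^n|a_k|$ (which is correct, since $L_n$ corresponds to a finitely supported $\ell^1$ vector). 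Your route is shorter, kills $(iii)\Rightarrow(i)$ and $(iv)\Rightarrow(i)$ simultaneously, and matches the flavor the paper itself uses later in the proof of Proposition 2.5 (the shift on $c_0$ plus uniform boundedness); its cost is the appeal to Baire category and its non-constructive character. The paper's argument is entirely elementary and explicit, produces a monotone null modulating sequence (a mild strengthening), and only needs the single implication $(iii)\Rightarrow(i)$ because $(iv)\Rightarrow(iii)$ is trivial. Both are complete proofs; your only obligation, which you noted, is to verify that each $L_n$ is bounded and that the norm identity holds, and that is immediate.
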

\begin{proof}
$(i) \Longrightarrow (ii)$: Let $b_n \to 0$. For $\epsilon >0$ take $N$ such that $|b_k| < \epsilon$
for $k>N$. Then for $n>N$ we have
$$
\frac1n \sum_{k=1}^n |a_kb_k| \le 
\frac1n\sum_{k=1}^N |a_kb_k|+\epsilon \frac1n\sum_{k=N+1}^n |a_k|,
$$ 
which yields (ii).

Trivially, $(ii) \Longrightarrow (iii)$ and $(i) \Longrightarrow (iv) \Longrightarrow (iii)$.

$(iii) \Longrightarrow (i)$: Assume (i) fails, so there exists   $(n_j)_j$ increasing with
$\frac1{n_j}\sum_{k=1}^{n_j} |a_k| >j$ for every $j$. Define $(b_n)$  for $N_j \le n < N_{j+1}$
by $|b_n| =1/\sqrt{j}$ and $\arg b_n = -\arg a_n$. Then $|b_n| \searrow 0$, and
$$
\big| \frac1{n_j} \sum_{k=1}^{n_j} a_kb_k\big|= \frac1{n_j} \sum_{k=1}^{n_j} |a_kb_k| \ge
\frac1{\sqrt{j}}\cdot \frac1{n_j}\sum_{k=1}^{n_j} |a_k| >\sqrt{j} \quad \forall j,
$$
contradicting (iii).
\end{proof}

\begin{prop} \label{stability}
Let $\ba=(a_k)_{k\ge 1}$ be a sequence of scalars. Then the following are equivalent:

(a) $\sup_n \frac1n \sum_{k=1}^n |a_k| < \infty$
\smallskip

(b) For every bounded linear $T$ on a (real or complex) Banach space $X$ and $x \in X$ weakly 
stable for $T$ we have  $\frac1n \sum_{k=1}^n a_kT^kx \to 0$ weakly.
\smallskip

(c) For every bounded linear $T$ on a (real or complex) Banach space $X$ and $x \in X$ 
stable for $T$ we have  $\frac1n \sum_{k=1}^n a_kT^kx \to 0$.
\end{prop}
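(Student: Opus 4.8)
The plan is to get the ``positive'' implications $(a)\Rightarrow(b)$ and $(a)\Rightarrow(c)$ straight out of Lemma \ref{W1}, and to obtain the converses from one backward-shift counterexample.

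First assume $(a)$, which is exactly condition $(i)$ of Lemma \ref{W1}. If $x$ is stable for $T$, I would set $b_k:=\|T^kx\|$, so $b_k\to 0$, and conclude $\|\frac1n\sum_{k=1}^n a_kT^kx\|\le\frac1n\sum_{k=1}^n|a_k|\,b_k\to 0$ by Lemma \ref{W1}$(ii)$; this is $(c)$. If $x$ is merely weakly stable, fix $x^*\in X^*$, set $b_k:=\langle x^*,T^kx\rangle\to 0$, and note $\big|\langle x^*,\frac1n\sum_{k=1}^n a_kT^kx\rangle\big|\le\frac1n\sum_{k=1}^n|a_kb_k|\to 0$, again by Lemma \ref{W1}$(ii)$; since $x^*$ is arbitrary this gives $(b)$.

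For the converse it is enough to show that when $(a)$ fails a single example violates both $(b)$ and $(c)$. Failure of $(a)$, i.e.\ of condition $(i)$, forces failure of $(iii)$ in Lemma \ref{W1}: there is a scalar sequence $(b_k)_{k\ge1}$ with $b_k\to 0$ — real when $\ba$ is real, by the construction in that proof — for which $\big(\frac1n\sum_{k=1}^n a_kb_k\big)_n$ is unbounded. I would realize this orbit by the backward shift: let $X=c_0$ (over $\R$ or $\C$ according to $\ba$), let $T(y_1,y_2,\dots):=(y_2,y_3,\dots)$, a contraction on $c_0$, and let $x:=(0,b_1,b_2,\dots)\in c_0$. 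Then $T^kx=(b_k,b_{k+1},\dots)$, hence $\|T^kx\|_{c_0}=\sup_{j\ge k}|b_j|\to 0$, so $x$ is stable for $T$, a fortiori weakly stable; but the first coordinate of $\frac1n\sum_{k=1}^n a_kT^kx$ equals $\frac1n\sum_{k=1}^n a_kb_k$, which is unbounded in $n$. Hence the averages $\frac1n\sum_{k=1}^n a_kT^kx$ are norm-unbounded, so they converge neither in norm (contradicting $(c)$) nor weakly (a weakly convergent sequence is norm-bounded; alternatively, pair with the first-coordinate functional of $\ell^1=(c_0)^*$), contradicting $(b)$. Together with the previous paragraph this yields $(a)\Leftrightarrow(b)\Leftrightarrow(c)$.

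There is essentially no obstacle here, since all the substance sits in Lemma \ref{W1}, which is already in hand; the only points deserving a moment's care are that the counterexample vector is genuinely stable in norm — guaranteed precisely by $b_k\to 0$ — and that one can stay inside a real Banach space when $\ba$ is real, which is possible because the $b_k$ furnished by Lemma \ref{W1} may be taken real, with signs opposite to those of the $a_k$.
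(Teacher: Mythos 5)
Your proof is correct, and the forward implications $(a)\Rightarrow(b),(c)$ are exactly the paper's: plug $b_k=\langle x^*,T^kx\rangle$, respectively $b_k=\|T^kx\|$, into Lemma \ref{W1}(ii). Where you genuinely diverge is the converse. The paper also uses the backward shift on $c_0$ (on which every vector is stable), but it argues forward from (b): weak convergence of the averages for all $x\in c_0$ gives, via Banach--Steinhaus applied twice, $\sup_n\|\frac1n\sum_{k=1}^n a_kT^k\|<\infty$, and then it invokes the proof of Proposition 1.3 of \cite{LOT} to extract $\sup_n\frac1n\sum_{k=1}^n|a_k|<\infty$; the implication $(c)\Rightarrow(a)$ is treated the same way. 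You instead argue by contraposition and stay entirely inside the paper: if (a) fails, the construction in Lemma \ref{W1} (failure of (iii)) hands you $b_k\to0$ with $\big(\frac1n\sum_{k=1}^n a_kb_k\big)_n$ unbounded, and the single vector $x=(0,b_1,b_2,\dots)\in c_0$ is norm-stable while the first-coordinate functional in $\ell^1=(c_0)^*$ shows the modulated averages are unbounded, killing (b) and (c) simultaneously. This buys self-containedness (no appeal to \cite{LOT}), avoids the uniform-boundedness argument, and makes the real-scalar case transparent; the paper's route is shorter on the page only because it outsources the extraction step. One cosmetic remark: in the Lemma \ref{W1} construction the phases are chosen so that $a_kb_k=|a_kb_k|$, i.e.\ for real sequences $b_k$ has the \emph{same} sign as $a_k$, not the opposite one as you say — but with opposite signs the averages are $-\frac1n\sum_{k=1}^n|a_kb_k|$, still unbounded in modulus, so nothing in your argument is affected.
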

\begin{proof}
Assume (a).  Let $T^nx \to 0$ weakly, and take a functional $x^* \in X^*$. 
Putting $b_n =\langle T^nx,x^*\rangle$ in (ii) of Lemma \ref{W1} we obtain (b).
Similarly, putting $b_n=\|T^nx\|$ when $T^n x \to 0$ we obtain (c).

Assume (b). Let $T$ be the shift $T(x_1,x_2\dots, ) =(x_2,x_3,\dots)$ on the complex $c_0$. Then 
for every $x \in c_0$ we have $T^nx \to 0$. By the assumption, $\frac1n \sum_{k=1}^n a_kT^kx \to 0$ 
weakly for every $x \in c_0$, so by the Banach-Steinhaus theorem (twice) 
$\sup_n \|\frac1n \sum_{k=1}^n a_kT^k \| < \infty$. Now the proof of \cite[Proposition 1.3]{LOT}
yields that $\sup_n \frac1n \sum_{k=1}^n |a_k| < \infty$.

The proof that (c) implies (a) is similar to (b) implies (a).
\end{proof}

\noindent
{\bf Remarks.} 1. In (b) we need not have strong convergence; see the example preceding Proposition
\ref{an=o(n)}.

2. When $T$ is power-bounded, the sets of its weakly stable vectors and of its stable vectors are closed
invariant subspaces.
\medskip

\begin{prop}
Let $\ba$ be a sequence such that for every weakly stable $T$ on a Banach space $X$ and 
$x \in X$ the sequence $\frac1n \sum_{k=1}^n a_kT^kx $ converges.
Then $\sup_n \frac1n \sum_{k=1}^n |a_k| < \infty$ and $\frac1{n^2} \sum_{k=1}^n |a_k|^2 \to 0$.
\end{prop}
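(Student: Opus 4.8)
The hypothesis is very strong — it forces convergence of $\frac1n\sum_{k=1}^n a_k T^k x$ for *every* weakly stable $T$ and *every* $x$. The first claim, $\sup_n \frac1n\sum_{k=1}^n|a_k|<\infty$, follows immediately from Proposition \ref{stability}: convergence in particular implies boundedness, so condition (c) of that proposition holds (take $T$ to be the backward shift on $c_0$, where every vector is stable), and hence (a) holds. So the only real content is the new second-moment conclusion $\frac1{n^2}\sum_{k=1}^n|a_k|^2\to 0$.

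For that, the plan is to test the hypothesis on a cleverly chosen weakly stable operator on a Hilbert space. The natural candidate is a weighted backward shift on $\ell^2(\N)$, or equivalently a Hilbert-space realization that lets us isolate the squares $|a_k|^2$. Concretely, consider $H=\ell^2$ with the backward shift $S(x_1,x_2,\dots)=(x_2,x_3,\dots)$; for any $x\in\ell^2$ one has $S^n x\to 0$ in norm, so $x$ is stable and the averages $\frac1n\sum_{k=1}^n a_k S^k x$ must converge, hence be bounded. Taking $x=e_m$ (the $m$-th unit vector) gives $S^k e_m = e_{m-k}$ for $k<m$ and $0$ for $k\ge m$; the partial average $\frac1n\sum_{k=1}^n a_k S^k e_m$ for $n\ge m$ equals $\frac1n\sum_{k=1}^{m-1} a_k e_{m-k}$, whose norm is $\frac1n\big(\sum_{k=1}^{m-1}|a_k|^2\big)^{1/2}$. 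Letting $n\to\infty$ this tends to $0$, which gives nothing — so a fixed $x$ is not enough, and we must exploit *all* $x$ simultaneously, i.e. use the Banach–Steinhaus theorem: convergence for every $x$ forces $\sup_n\|\frac1n\sum_{k=1}^n a_k S^k\|<\infty$. Now compute this operator norm: $\frac1n\sum_{k=1}^n a_k S^k$ maps $e_m\mapsto \frac1n\sum_{k=1}^{\min(n,m)-? } a_k e_{m-k}$, and its norm is at least $\frac1n\big(\sum_{k=1}^{n}|a_k|^2\big)^{1/2}$ (test on $e_{n+1}$, say). Uniform boundedness then only yields $\frac1{n^2}\sum_{k=1}^n|a_k|^2=O(1)$, i.e. boundedness, not convergence to $0$.

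To upgrade $O(1)$ to $o(1)$ one must use the full strength of *convergence* rather than mere boundedness, together with a diagonal/gliding-hump argument. The idea: if $\frac1{n^2}\sum_{k=1}^n|a_k|^2\not\to0$, pick an increasing sequence $(n_j)$ and $c>0$ with $\frac1{n_j^2}\sum_{k=1}^{n_j}|a_k|^2\ge c$. Build a single stable vector $x$ in a Hilbert space (an $\ell^2$-direct sum of the shift models on disjoint blocks, with carefully chosen block sizes and coefficients $\arg x$ aligned with $-\arg a_k$ on the $j$-th block) so that along the subsequence $n_j$ the modulated average $\frac1{n_j}\sum_{k=1}^{n_j} a_k T^k x$ has norm bounded below by a constant multiple of $\big(\frac1{n_j^2}\sum_{k=1}^{n_j}|a_k|^2\big)^{1/2}\ge\sqrt{c}$ along *infinitely many* indices, while also arranging (using the spacing of blocks) that the averages oscillate — e.g. that at the start of each block the average is small — so the sequence $\frac1n\sum a_k T^k x$ does not converge, contradicting the hypothesis. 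This amalgamation into one operator and one vector, making the average both large infinitely often and small infinitely often, is the main obstacle; it parallels the constructions in \cite{LOT} and \cite{BLRT} but must be done in the Hilbert-space/weakly-stable category and keyed to the *square* sums. I would model the construction on Lemma \ref{W1}(iii)$\Rightarrow$(i), where a similar gliding-hump with $|b_n|=1/\sqrt{j}$ on the $j$-th block produced a contradiction to boundedness; here the analogous weights, adapted to disjoint shift blocks, should produce a contradiction to convergence from the square-sum growth.
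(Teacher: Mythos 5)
Your treatment of the first assertion is essentially the paper's: the hypothesis applied to the shift on $c_0$ gives convergence, hence boundedness, of the averages for every $x$, Banach--Steinhaus gives uniform boundedness, and the argument of (b)$\Rightarrow$(a) in Proposition \ref{stability} (i.e.\ of \cite[Proposition 1.3]{LOT}) yields $\sup_n\frac1n\sum_{k=1}^n|a_k|<\infty$. (Strictly speaking you cannot say that condition (c) of Proposition \ref{stability} ``holds'' --- your hypothesis gives convergence, not convergence to $0$ --- but since the proof of that implication only uses boundedness, this is a cosmetic slip.)

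The second assertion is where there is a genuine gap, and it stems from testing the wrong operator. You take the backward (stable) shift on $\ell^2(\N)$, observe that a fixed vector gives nothing because its orbit dies out and that uniform boundedness only yields $\frac1{n^2}\sum_{k=1}^n|a_k|^2=O(1)$, and conclude that one must amalgamate infinitely many shift blocks by a gliding-hump construction --- which you then only sketch and yourself describe as ``the main obstacle.'' As written, the claim $\frac1{n^2}\sum_{k=1}^n|a_k|^2\to0$ is not proved. The missing idea is that the hypothesis covers all \emph{weakly} stable operators, so you may test the bilateral shift $U$ on $\ell^2(\Z)$ (or the forward unilateral shift), which is weakly stable and has an orthonormal orbit, so that
\begin{equation*}
\Big\|\frac1n\sum_{k=1}^n a_kU^k\vec e_1\Big\|^2=\frac1{n^2}\sum_{k=1}^n|a_k|^2 .
\end{equation*}
Moreover the limit of these averages is forced to be $0$: each fixed coordinate of the average equals $a_j/n\to0$ (alternatively, once $\sup_n\frac1n\sum_{k=1}^n|a_k|<\infty$ is known, (a)$\Rightarrow$(b) of Proposition \ref{stability} gives weak convergence to $0$), so norm convergence is convergence to $0$, and hence $\frac1{n^2}\sum_{k=1}^n|a_k|^2\to0$. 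This single-operator, single-vector argument is exactly how the paper concludes, via Theorem \ref{eq-conditions} (see Remark 1 following it); in particular your assertion that ``a fixed $x$ is not enough'' is an artifact of having chosen the stable backward shift rather than a weakly stable isometry, and no gliding-hump construction is needed.
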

\begin{proof}
The first condition follows from the proof of (b) implies (a) in Proposition \ref{stability}.
The second condition follows from Theorem \ref{eq-conditions} in the next section.
\end{proof}
\medskip

\begin{prop}
Let $\ba$ be a sequence such that for every weakly almost periodic $T$ on a Banach space $X$ and 
every flight vector $x \in X$ the sequence $\frac1n \sum_{k=1}^n a_kT^kx $ converges weakly.
Then $\sup_n \frac1n \sum_{k=1}^n |a_k| < \infty$ and $\frac1{n^2} \sum_{k=1}^n |a_k|^2 \to 0$.
\end{prop}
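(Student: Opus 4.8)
The plan is to run the same two‑step scheme used in the proof of the preceding proposition, replacing the weakly stable operator there by a weakly almost periodic operator for which the relevant vector is a flight vector.

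\textbf{The bound $\sup_n\frac1n\sum_{k=1}^n|a_k|<\infty$.} I would reuse the proof that (b) implies (a) in Proposition \ref{stability}. Take the backward shift $T(x_1,x_2,\dots)=(x_2,x_3,\dots)$ on $c_0$. Since $T^nx\to0$ in norm for every $x\in c_0$, each orbit $\{T^nx\}_{n\ge0}$ has norm‑compact, hence weakly compact, closure, so $T$ is weakly almost periodic and every $x\in c_0$ is stable, in particular a flight vector. By the hypothesis, $\frac1n\sum_{k=1}^n a_kT^kx$ converges weakly for each $x\in c_0$; applying the uniform boundedness principle twice (first to each weakly convergent, hence norm‑bounded, sequence of averages, then to the averaging operators) gives $\sup_n\|\frac1n\sum_{k=1}^n a_kT^k\|<\infty$, and the argument of \cite[Proposition 1.3]{LOT} then yields $\sup_n\frac1n\sum_{k=1}^n|a_k|<\infty$.

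\textbf{The $\ell^2$‑condition $\frac1{n^2}\sum_{k=1}^n|a_k|^2\to0$.} Here I would appeal to Theorem \ref{eq-conditions} of the next section: once $\sup_n\frac1n\sum_{k=1}^n|a_k|<\infty$ is known, the hypothesis of the present proposition falls under the scope of that theorem, which forces $\frac1{n^2}\sum_{k=1}^n|a_k|^2\to0$. In contrapositive form: if the $\ell^2$‑condition fails, Theorem \ref{eq-conditions} provides a contraction $T$ on a Hilbert space and a flight vector $x$ for which $\frac1n\sum_{k=1}^n a_kT^kx$ does not converge weakly; since $T$ is power‑bounded on a reflexive space it is weakly almost periodic, contradicting the hypothesis.

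\textbf{Where the real work is.} The two steps above are routine modulo Theorem \ref{eq-conditions}, and the substance lies in that theorem. Note first that no information about $\frac1{n^2}\sum_{k=1}^n|a_k|^2$ can be read off from \emph{weakly stable} vectors: by Proposition \ref{stability}(b) the bound $\sup_n\frac1n\sum|a_k|<\infty$ already forces $\frac1n\sum_{k=1}^n a_kT^kx\to0$ weakly for all such $x$, and these averages are in any case norm‑bounded by $\frac1n\sum|a_k|\,\|x\|$. Hence the example witnessing necessity must involve a weakly almost periodic operator possessing a flight vector that is \emph{not} weakly stable, so that its orbit, and with it the modulated averages, can genuinely oscillate in the weak topology; the natural candidate is a rigid weakly mixing Koopman operator (equivalently, a unitary on $\ell^2$ with a continuous, rigid spectral measure $\sigma_x$), for which $\|\frac1n\sum_{k=1}^n a_kT^kx\|^2=\int_{\mathbb T}\bigl|\frac1n\sum_{k=1}^n a_kz^k\bigr|^2\,d\sigma_x(z)$ can be made to oscillate along the rigidity times precisely when $\frac1{n^2}\sum_{k=1}^n|a_k|^2\not\to0$. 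Carrying this construction out is the main obstacle, and is presumably exactly what Theorem \ref{eq-conditions} supplies.
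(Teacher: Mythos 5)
Your first step is exactly the paper's argument (the shift on $c_0$ together with the proof of (b)$\Rightarrow$(a) in Proposition \ref{stability}), and it is fine. The gap is in your second step: Theorem \ref{eq-conditions} cannot be invoked here. That theorem equates $\frac1{n^2}\sum_{k=1}^n|a_k|^2\to 0$ with \emph{norm} convergence to zero of the modulated averages for \emph{weakly stable} contractions, whereas your hypothesis only provides \emph{weak} convergence for flight vectors. So the contrapositive you state --- ``if the $\ell^2$-condition fails, Theorem \ref{eq-conditions} provides a contraction and a flight vector whose modulated averages do not converge weakly'' --- is not what the theorem gives: failure of (iv) only produces a weakly stable vector whose averages fail to converge to $0$ in norm, and by Proposition \ref{stability}(b) these averages still converge weakly to $0$ once $\sup_n\frac1n\sum_{k=1}^n|a_k|<\infty$. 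The Example following Theorem \ref{eq-conditions} (the sequence with $a_{2^j}=2^{j-1}$ and $a_k=0$ otherwise) makes this concrete: it has bounded averages and weakly modulates every weakly stable operator on $H$, yet $\frac1{n^2}\sum_{k=1}^n|a_k|^2\not\to0$. Hence no argument confined to weakly stable vectors can close this step --- a point you in fact make yourself in your last paragraph, which undercuts the deduction you propose.

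What is actually needed (and what the paper uses) is Proposition \ref{an=o(n)} combined with Lemma \ref{ak-squared}: weak convergence of $\frac1n\sum_{k=1}^n a_k\langle T^kx,x\rangle$ for every unitary $T$ and flight vector $x$ forces $\frac1n a_n\langle T^nx,x\rangle\to0$; if $|a_{n_j}|\ge c\,n_j$ along some increasing $(n_j)$, one passes to a lacunary subsequence, which is a rigidity sequence, takes a continuous probability $\mu$ on $\mathbb T$ with $\hat\mu(n_j)\to1$, and considers multiplication by $z$ on $L^2(\mathbb T,\mu)$ with $x=\mathbf 1$ (a flight vector, since $T$ has no eigenvalues); this contradicts $\frac1{n_j}a_{n_j}\langle T^{n_j}\mathbf 1,\mathbf 1\rangle\to0$, so $a_n=o(n)$, and Lemma \ref{ak-squared} (using the first condition) then yields $\frac1{n^2}\sum_{k=1}^n|a_k|^2\to0$. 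Your closing paragraph correctly identifies a rigid operator with continuous spectral measure as the needed witness, but you leave the construction as something ``presumably supplied'' by Theorem \ref{eq-conditions}; it is not, and without carrying out that rigidity argument (which, note, should exploit weak oscillation of the averages via the inner products, not just oscillation of their norms) the proof of the second assertion is incomplete.
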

\begin{proof}
The first condition follows from the proof of (b) implies (a) in Proposition \ref{stability}.
The second condition follows from combining Proposition \ref{an=o(n)} and Lemma \ref{ak-squared},
proved in the next section.
\end{proof}
\medskip

\section{Modulation of flight vectors of contractions on a Hilbert space}

It was shown in \cite[Theorem 2.1]{LOT} that if $\ba$ is Hartman with
$\sup_n \frac1n\sum_{k=1}^n |a_k| < \infty$, then for every contraction $T$ on a complex Hilbert 
space $H$ and $x \in H$ the sequence $\frac1n\sum_{k=1}^n a_kT^kx$ converges in norm; (this is a 
consequence of Corollary 2.3 of \cite{BLRT}, which gives necessary and sufficient conditions). 
Note that (unlike the general result of \cite{LOT}) almost periodicity is not required.
It is therefore a natural question, when $\ba$ is not Hartman,  whether for contractions on $H$ 
the assertion of Theorem \ref{flight} holds when only $\sup_n \frac1n\sum_{k=1}^n |a_k| < \infty$; 
the example below yields a negative answer. 
\medskip

\begin{prop} \label{stable}
Let $\ba$ be a sequence of scalars satisfying $\sup_n \frac1n\sum_{k=1}^n |a_k| = C < \infty$.
If $|a_k| =o(k)$, 
then  for every contraction $T$ on a (real or complex) Hilbert space $H$ and $x \in H$ with 
$T^nx \overset{w}\to 0$ we have $ \lim _n \|\frac1n\sum_{k=1}^n a_k T^kx\| =0$.
\end{prop}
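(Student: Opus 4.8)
The plan is to strip away the contraction $T$ by passing to a dilation, after which the problem becomes an elementary two-regime estimate in which weak stability enters only through the decay of an autocorrelation sequence. First I would take an isometric dilation $V$ of $T$ on a Hilbert space $K\supseteq H$ (the Sz.-Nagy dilation; its construction via the defect operator $(I-T^*T)^{1/2}$ is valid over $\R$ as well as over $\C$, so the real case needs no separate treatment), so that $P_HV^k|_H=T^k$ for all $k\ge 0$. Then
\[
\Big\|\frac1n\sum_{k=1}^n a_kT^kx\Big\|=\Big\|P_H\Big(\frac1n\sum_{k=1}^n a_kV^kx\Big)\Big\|\le\Big\|\frac1n\sum_{k=1}^n a_kV^kx\Big\|,
\]
and, since $V$ is an isometry, $\langle V^kx,V^\ell x\rangle$ depends only on $k-\ell$: setting $r(j):=\langle T^jx,x\rangle$ for $j\ge 0$ and $r(-j):=\overline{r(j)}$, one has $\langle V^kx,V^\ell x\rangle=r(k-\ell)$, where $|r(j)|\le\|x\|^2=:M$ for all $j$ and, crucially, $r(j)\to 0$ as $j\to\infty$ because $T^nx\to 0$ weakly. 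Expanding the norm on the right, it therefore suffices to prove
\[
\frac1{n^2}\sum_{k,\ell=1}^n a_k\overline{a_\ell}\,r(k-\ell)\ \longrightarrow\ 0\qquad(n\to\infty).
\]

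For this, given $\epsilon>0$ I would choose $J$ with $|r(j)|<\epsilon$ whenever $|j|\ge J$ and split the double sum according to whether $|k-\ell|<J$ or $|k-\ell|\ge J$. The part with $|k-\ell|\ge J$ is bounded in absolute value by $\epsilon\big(\tfrac1n\sum_{k=1}^n|a_k|\big)^2\le\epsilon C^2$. For the part with $|k-\ell|<J$, using $|a_k||a_\ell|\le\tfrac12(|a_k|^2+|a_\ell|^2)$ together with the fact that each $k$ has at most $2J-1$ partners $\ell$ with $|k-\ell|<J$, one obtains the bound $\dfrac{(2J-1)M}{n^2}\sum_{k=1}^n|a_k|^2$. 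So the whole argument reduces to the single arithmetic fact $\frac1{n^2}\sum_{k=1}^n|a_k|^2\to 0$, and this is exactly where the hypothesis $|a_k|=o(k)$ is used, since
\[
\frac1{n^2}\sum_{k=1}^n|a_k|^2\le\frac{\max_{k\le n}|a_k|}{n}\cdot\frac1n\sum_{k=1}^n|a_k|\le C\cdot\frac{\max_{k\le n}|a_k|}{n}\ \longrightarrow\ 0
\]
(alternatively this follows from Lemma \ref{W1} applied with $b_k=|a_k|/k\to 0$, via $\frac1{n^2}\sum_{k=1}^n|a_k|^2\le\frac1n\sum_{k=1}^n|a_k|\cdot\frac{|a_k|}{k}$). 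Letting $n\to\infty$ and then $\epsilon\to 0$ finishes the proof.

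The step I expect to be the crux is the reduction carried out above: the point is to realize that dilating $T$ converts $\langle T^kx,T^\ell x\rangle$ into the autocorrelation $r(k-\ell)$, so that the only property of weak stability one actually needs is $\langle T^jx,x\rangle\to 0$, and the only property of $\ba$ one needs beyond $\sup_n\frac1n\sum_{k=1}^n|a_k|<\infty$ is $\frac1{n^2}\sum_{k=1}^n|a_k|^2\to 0$. Everything after that is a routine splitting estimate; in particular no spectral measure of the dilation, no Hartman hypothesis on $\ba$, and no Jacobs--deLeeuw--Glicksberg decomposition of $T$ is required, in contrast with the proofs of Theorem \ref{flight} and Corollary \ref{complex}.
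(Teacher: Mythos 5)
Your proof is correct, but it follows a genuinely different route from the paper. The paper's proof first shows $\frac1n\max_{1\le k\le n}|a_k|\to 0$ (from $|a_k|=o(k)$ and the bounded averages) and then invokes Krengel's proof of the generalized Blum--Hanson theorem \cite[p.~254]{Kr}, applied to the summation matrix $\alpha_{n,k}=a_k/n$, so the Hilbert-space work is outsourced to that matrix-summability result. You instead give a self-contained argument: pass to the Sz.-Nagy--Sch\"affer isometric dilation $V$ (whose construction via $(I-T^*T)^{1/2}$ indeed works over $\R$, and which satisfies $P_HV^k|_H=T^k$), use that for an isometry $\langle V^kx,V^\ell x\rangle=r(k-\ell)$ with $r(j)=\langle T^jx,x\rangle\to 0$, and finish with the standard split of $\frac1{n^2}\sum_{k,\ell}a_k\overline{a_\ell}\,r(k-\ell)$ into near-diagonal and off-diagonal parts; the near-diagonal part is controlled by $\frac1{n^2}\sum_{k\le n}|a_k|^2\to 0$, which is exactly the content of Lemma \ref{ak-squared} (your Lemma \ref{W1} alternative with $b_k=|a_k|/k$ is the cleaner justification, and is essentially the paper's computation \eqref{equivalence}). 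In effect you reprove the particular weighted Blum--Hanson statement needed rather than citing it, which buys two things the paper's formulation does not make explicit: only the single correlation $\langle T^nx,x\rangle\to 0$ is used (weaker than $T^nx\overset{w}\to 0$), and the precise arithmetic input is $\frac1{n^2}\sum_{k\le n}|a_k|^2\to 0$, i.e.\ condition (iv) of Theorem \ref{eq-conditions}, which makes the equivalences there transparent; what the paper's citation of Krengel buys is brevity and the full strength of the matrix-summation theorem. No gaps to flag --- the dilation identity, the $(2J-1)$ counting, and the final $\limsup\le\epsilon C^2$ step are all sound.
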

%
%
\begin{proof}
We first prove that $\frac1n\max_{1\le k\le n} |a_k| \to 0$ as $n \to \infty$. For
$\epsilon >0$ there is $K$ such that $\frac1k |a_k| < \epsilon$ for $k >K$. For $n >K$ we have
$$
\max_{1\le k \le n} \Big(\frac{|a_k|}n \Big)^2 \le \sum_{k=1}^n \frac{|a_k|^2}{n^2} \le
\frac1{n^2}\sum_{k=1}^K |a_k|^2 +\frac1n\sum_{k=K+1}^n |a_k|\frac{|a_k|}k \le
$$
\begin{equation} \label{equivalence}
\frac1{n^2}\sum_{k=1}^K |a_k|^2 + \epsilon \frac1n\sum_{k=N+1}^n |a_k| \le
\frac1{n^2}\sum_{k=1}^K |a_k|^2 + C\epsilon.
\end{equation}
Hence $\lim_{n\to\infty} \frac1n\max_{1\le k\le n} |a_k| =0$.

Define $\alpha_{n,0}=0, \quad \alpha_{n,k} =a_k/n$ for $1\le k \le n$, and $\alpha_{n,k}=0$
for $k>n$. Then $\lim_n \sup_k |\alpha_{n,k}| =0$ by the above, and Krengel's proof of the 
generalized Blum-Hanson theorem \cite[p. 254]{Kr} (where in (W5) one should read
$c:= \sup_N \sum_{i} |\alpha_{Ni}|< \infty$) yields the result.
\end{proof}

\noindent
{\bf Remark.} Without the additional condition $|a_k|=o(k)$, we have weak convergence in Proposition
\ref{stable}, by Proposition \ref{stability}; Theorem \ref{eq-conditions} below shows that in that 
case strong convergence does not hold.
\smallskip

\begin{cor} \label{stable+fix}
Let $\ba$ be a sequence of scalars satisfying $\sup_n \frac1n\sum_{k=1}^n |a_k| = C < \infty$.
If $c(1):=\lim_n \frac1n\sum_{k=1}^n a_k$ exists, then  for every contraction $T$
on a (real or complex) Hilbert space $H$  which converges in the weak operator topology,
necessarily to $E(T)$, we have $\lim_n \|\frac1n \sum_{k=1}^n a_kT^kx -c(1)E(T)x\|=0$ for 
every $x \in H$.
\end{cor}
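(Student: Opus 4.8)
The plan is to reduce everything to the ergodic decomposition together with Proposition \ref{stable}. The first observation is that the hypothesis that $c(1)=\lim_n\frac1n\sum_{k=1}^n a_k$ exists already forces $|a_n|=o(n)$: writing $b_n=\frac1n\sum_{k=1}^n a_k$ one has $a_n=nb_n-(n-1)b_{n-1}$, hence $\frac{a_n}{n}=b_n-\frac{n-1}{n}b_{n-1}\to c(1)-c(1)=0$. Thus $\ba$ satisfies both hypotheses of Proposition \ref{stable}, namely the standing assumption $\sup_n\frac1n\sum_{k=1}^n|a_k|=C<\infty$ and $|a_k|=o(k)$. This is exactly the point that will yield norm convergence, and not merely the weak convergence supplied by Proposition \ref{stability}.

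Next I would use that, since $H$ is reflexive, we have the ergodic decomposition $H=F(T)\oplus\overline{(I-T)H}$ with $E(T)$ the projection onto $F(T)$; moreover, if $(T^n)$ converges in the weak operator topology, then taking Ces\`aro means and applying the mean ergodic theorem shows the limit must be $E(T)$, which is the assertion built into the statement. On the fixed part the claim is immediate: for $y\in F(T)$ we have $T^ky=y$, so $\frac1n\sum_{k=1}^n a_kT^ky=b_n\,y\to c(1)\,y=c(1)E(T)y$.

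The substantive step is the behaviour on $\overline{(I-T)H}$. For $z$ in that subspace we have $E(T)z=0$, and since $T^n\to E(T)$ in the weak operator topology this gives $T^nz\to0$ weakly, i.e. $z$ is weakly stable for the contraction $T$. Now Proposition \ref{stable} applies and yields $\|\frac1n\sum_{k=1}^n a_kT^kz\|\to0=c(1)E(T)z$. Writing a general $x=E(T)x+(x-E(T)x)$ and adding the two estimates, controlled respectively by $|b_n-c(1)|\,\|E(T)x\|$ and by the previous line, completes the proof. I do not anticipate a genuine obstacle; the only thing to get right is the first paragraph, i.e. recognizing that the existence of $c(1)$ upgrades the situation so that Proposition \ref{stable}, rather than just Proposition \ref{stability}, can be invoked on $\overline{(I-T)H}$.
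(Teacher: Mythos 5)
Your proof is correct and follows essentially the same route as the paper: the key observation, which is the entire content of the paper's own proof, is that the existence of $c(1)$ forces $a_n/n\to 0$, after which one splits $x$ along the ergodic decomposition, handles $F(T)$ by the convergence of $\frac1n\sum_{k=1}^n a_k$, and applies Proposition \ref{stable} to the weakly stable part $x-E(T)x$. No gaps; your identification of the WOT limit with $E(T)$ via Ces\`aro means is also the intended justification.
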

\begin{proof} Existence of $c(1)$ yields \quad
$\displaystyle{ \frac{a_n}n = \frac1n\sum_{k=1}^na_k - \frac{n-1}n \cdot\frac1{n-1}\sum_{k=1}^{n-1}a_k
\to 0 }$.
\end{proof}
\smallskip

\begin{lem} \label{ak-squared}
Let $\ba$ satisfy $\sup_n \frac1n\sum_{k=1}^n |a_k| =C< \infty$. Then $a_n=o(n)$ if and only if
$\frac1{n^2}\sum_{k=1}^n |a_k|^2 \to 0$.
\end{lem}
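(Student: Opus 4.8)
The asserted equivalence is elementary, and the power-boundedness hypothesis $\sup_n\frac1n\sum_{k=1}^n|a_k|=C<\infty$ is needed in only one of the two directions. I would therefore treat the two implications separately.

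For the implication $\frac1{n^2}\sum_{k=1}^n|a_k|^2\to 0\ \Longrightarrow\ a_n=o(n)$, I would simply use the termwise bound $\frac{|a_n|^2}{n^2}\le\frac1{n^2}\sum_{k=1}^n|a_k|^2$, which forces $|a_n|/n\to 0$. (Applying the same bound to the largest term gives the stronger conclusion $\frac1n\max_{1\le k\le n}|a_k|\to 0$, as in the opening step of the proof of Proposition \ref{stable}, but only $a_n=o(n)$ is claimed here.) This direction does not use the hypothesis $C<\infty$.

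For the converse $a_n=o(n)\ \Longrightarrow\ \frac1{n^2}\sum_{k=1}^n|a_k|^2\to 0$, I would repeat the splitting argument already carried out in display \eqref{equivalence}. Fix $\epsilon>0$ and pick $K$ with $|a_k|/k<\epsilon$ for all $k>K$. For $n>K$, split $\frac1{n^2}\sum_{k=1}^n|a_k|^2$ into the part with $k\le K$ and the part with $K<k\le n$. The first part equals a fixed constant divided by $n^2$, hence tends to $0$. In the second part, write $\frac{|a_k|^2}{n^2}=\frac{|a_k|}{n}\cdot\frac{|a_k|}{n}\le\frac{|a_k|}{n}\cdot\frac{|a_k|}{k}\le\epsilon\,\frac{|a_k|}{n}$, using $k\le n$; summing gives at most $\epsilon\,\frac1n\sum_{k=K+1}^n|a_k|\le\epsilon C$. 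Hence $\limsup_n\frac1{n^2}\sum_{k=1}^n|a_k|^2\le\epsilon C$ for every $\epsilon>0$, which yields the claim.

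There is no genuine obstacle here beyond bookkeeping; the one point to get right is the inequality $\frac{|a_k|}{n}\le\frac{|a_k|}{k}$ for $k\le n$, which is exactly what converts the smallness of $a_k/k$ into a bound in terms of the Cesàro average $\frac1n\sum_{k=1}^n|a_k|$.
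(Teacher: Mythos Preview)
Your proof is correct and matches the paper's own argument exactly: the paper handles the first implication by the same termwise inequality $|a_n|^2/n^2\le\frac1{n^2}\sum_{k=1}^n|a_k|^2$, and for the converse it simply refers back to display \eqref{equivalence}, which is precisely the split-at-$K$ estimate you have written out in full.
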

\begin{proof}
One direction follows from $|a_n|^2/n^2 \le \frac1{n^2}\sum_{k=1}^n |a_k|^2 \to 0$.
The other direction is shown in (\ref{equivalence}).
\end{proof}

\begin{theo} \label{eq-conditions}
Let $H$ be a Hilbert space and let $\ba$ satisfy $\sup_n \frac1n \sum_{k=1}^n |a_k| < \infty$. 
Then the following are equivalent:
\smallskip

\noindent
(i) For every weakly stable contraction $T$ on $H$ and $x \in H$, \quad 
$\|\frac1n \sum_{k=1}^n a_kT^kx\| \to 0$.
\smallskip

\noindent
(ii) For some weakly stable unitary operator $U$ on $H$ and some $0 \ne x \in H$ we have
$\|\frac1n \sum_{k=1}^n a_kU^kx\| \to 0$.
\smallskip

\noindent
(iii) $a_n=o(n)$.
\smallskip

\noindent
(iv) $\frac1{n^2} \sum_{k=1}^n|a_k|^2 \to 0$.
\end{theo}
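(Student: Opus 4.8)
The plan is to run the cycle $(i)\Rightarrow(ii)\Rightarrow(iii)\Rightarrow(i)$ and to observe that $(iii)\Leftrightarrow(iv)$ is nothing but Lemma \ref{ak-squared} (note that under the standing hypothesis $\sup_n\frac1n\sum_{k=1}^n|a_k|<\infty$ the statements ``$a_n=o(n)$'' and ``$|a_k|=o(k)$'' are the same). For $(iii)\Rightarrow(i)$ I would simply invoke Proposition \ref{stable}: a weakly stable contraction has $T^nx\to 0$ weakly for every $x$, and $a_n=o(n)$ supplies the extra hypothesis $|a_k|=o(k)$ needed there, giving the norm convergence to $0$.

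For $(i)\Rightarrow(ii)$ it suffices to exhibit one weakly stable unitary operator together with a nonzero vector and then quote $(i)$. The natural candidate is $H=L^2(\mathbb T,m)$ ($m$ = normalized arclength measure) with $U$ = multiplication by $\lambda\mapsto\lambda$, or equivalently the bilateral shift on $\ell^2(\mathbb Z)$; weak stability follows because $\langle U^nf,g\rangle$ is a Fourier coefficient of $f\bar g\in L^1(\mathbb T)$ and hence tends to $0$ by the Riemann--Lebesgue lemma. Taking $x\equiv 1$ yields $(ii)$.

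The core of the theorem is $(ii)\Rightarrow(iii)$, and here I would use a telescoping (Abel-type) identity rather than any spectral machinery. If $s_n:=\frac1n\sum_{k=1}^n a_kU^kx\to 0$ in norm, then $n s_n-(n-1)s_{n-1}=a_nU^nx$, so
$$
\frac{a_n}{n}\,U^nx=s_n-\frac{n-1}{n}\,s_{n-1}\longrightarrow 0 .
$$
Since $U$ is unitary we have $\|U^nx\|=\|x\|>0$, whence $|a_n|/n\to 0$, i.e. $a_n=o(n)$.

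I do not expect a genuine obstacle here: the only place needing a little thought is picking the concrete weakly stable unitary in $(i)\Rightarrow(ii)$ and citing Riemann--Lebesgue, while the substantive implication $(ii)\Rightarrow(iii)$ comes out almost for free from the telescoping identity once one uses the isometric normalization $\|U^nx\|=\|x\|$ — which is exactly why $(ii)$ is phrased with a unitary (an isometry would already do) and not with a general contraction, for which $\|T^nx\|$ may decay to $0$.
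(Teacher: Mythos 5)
Your proposal is correct and follows essentially the same route as the paper: the cycle (i)$\Rightarrow$(ii)$\Rightarrow$(iii)$\Rightarrow$(i) with the telescoping identity and the normalization $\|U^nx\|=\|x\|$ for (ii)$\Rightarrow$(iii), Proposition \ref{stable} for (iii)$\Rightarrow$(i), and Lemma \ref{ak-squared} for (iii)$\Leftrightarrow$(iv). The only difference is that you spell out the concrete weakly stable unitary (bilateral shift, via Riemann--Lebesgue) for (i)$\Rightarrow$(ii), a step the paper treats as immediate.
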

\begin{proof}
Clearly (i) implies (ii).

Assume (ii). We may assume $\|x\|=1$. The convergence $\|\frac1n \sum_{k=1}^n a_kU^kx\| \to 0$ 
implies that 
$$
\frac{|a_n|}n =\frac{\|a_n U^nx\|}n \to 0.
$$

Since we have $\sup_n \frac1n \sum_{k=1}^n |a_k| < \infty$, (iii) implies (i) by  
Proposition \ref{stable}.

(iv) is equivalent to (iii)  by Lemma \ref{ak-squared}.
\end{proof}

\noindent
{\bf Remarks.} 1. The condition $\sup_n \frac1n \sum_{k=1}^n |a_k| < \infty$ is used only for proving
(iii) implies (i). Although it is assumed also in Lemma \ref{ak-squared}, we show that (i) implies
(iv) without it:
Let $U$ be the shift on $\ell^2(\mathbb Z)$, defined by 
$U(\sum_{j=-\infty}^\infty b_j \vec e_j)=\sum_{j=-\infty}^\infty b_{j}\vec e_{j+1}$, where 
$(\vec e_j)_{j\in \mathbb Z}$ is the standard orthonormal basis.
Then $U$ is a weakly stable unitary operator, and by orthogonality of the orbit $(U^k\vec e_1)$,
(i) yields
$$
\frac1{n^2}\sum_{k=1}^n |a_k|^2 =\big\|\frac1n\sum_{k=1}^n a_k U^k\vec e_1\big\|^2 \to 0,
$$
so (iv) holds.

2.  Without the condition $\sup_n \frac1n \sum_{k=1}^n |a_k| < \infty$, (iii) does not imply (iv),
as shown by the simple example $a_k=\sqrt{k}$; since (i) implies (iv), this example shows that
without this boundedness condition, (iii) does not imply (i), (and then Proposition 
\ref{stable} fails).

3.  The condition $\sup_n \frac1n \sum_{k=1}^n |a_k| < \infty$ is not necessary for (i); see
\cite[Example 2.5]{BLRT}.

4. The proof of (ii) implies (iii) shows that if we have modulation of one flight vector of an
isometry in a Banach space, then $a_n=o(n)$. However, we'll show below that this condition, 
together with the condition $\sup_n \frac1n \sum_{k=1}^n |a_k| < \infty$, do not imply modulation 
of all flight vectors of unitary operators in a (complex) Hilbert space.

5. Condition (iv) is independent of the boundedness of the averages $\frac1n\sum_{k=1}^n|a_k|$.
The simple example $a_k=k^{1/4}$ satisfies (iv) but the averages are unbounded. In the next example
the averages are bounded but (iv) fails.
\smallskip

\begin{ex} {\it A sequence $\ba$ with $\sup_n \frac1n \sum_{k=1}^n |a_k| <\infty$ 
which does not modulate any weakly stable unitary operator on $H$.}

We define a non-negative sequence $\ba$ by $a_{2^j}=2^{j-1}$ for $j \ge 1$ and $a_k=0$ for all 
other indices $k \ge 1$. Then for $2^\ell \le n <2^{\ell+1}$ we have
$$\frac1n\sum_{k=1}^n a_k \le \frac1{2^\ell} \sum_{j=1}^\ell 2^{j-1} = \frac{2^\ell -1}{2^\ell} <1. $$
However, $a_{2^j}/2^j =\frac12$, so (iii) of the above theorem fails, hence also (ii) fails, so
$\ba$ does not modulate any weakly stable unitary operator.

On the other hand, by Proposition \ref{stability}, $\frac1n\sum_{k=1}^n a_kT^k x \to 0$ {\it weakly}
for every weakly stable operator $T$ on $H$.

\end{ex}
\smallskip

\begin{prop} \label{an=o(n)}
If for every unitary operator $T$ on a complex Hilbert space $H$ and every flight
vector $x$  we have that $\frac1n\sum_{k=1}^n a_kT^kx$ converges weakly, then $a_n=o(n)$.
\end{prop}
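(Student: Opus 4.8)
\emph{Proof proposal.} I will prove the contrapositive: if $a_n\ne o(n)$, then some unitary operator on a complex Hilbert space has a flight vector whose modulated averages do not converge weakly. So assume there are $\delta>0$ and an increasing sequence $(n_j)$ with $|a_{n_j}|\ge\delta n_j$ for all $j$; passing to a subsequence I may also assume $(n_j)$ is lacunary, say $n_{j+1}\ge 3n_j$.

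The device is a continuous (atomless) measure whose Fourier coefficients do not vanish along $(n_j)$; the classical source is the Riesz product
\[
\mu:=\prod_{j\ge 1}\bigl(1+\cos(2\pi n_j\theta)\bigr)
\]
on $\mathbb T=\mathbb R/\mathbb Z$. By the standard theory of lacunary Riesz products, $\mu$ is a Borel probability measure; the condition $n_{j+1}\ge 3n_j$ forces each integer to have at most one representation $\sum_j\varepsilon_jn_j$ with $\varepsilon_j\in\{-1,0,1\}$, whence $\widehat\mu(\pm n_j)=\tfrac12$ for every $j$; and a Wiener-theorem computation (the Ces\`aro averages of $|\widehat\mu(m)|^2$ tend to $0$) shows $\mu$ has no atoms. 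Let $U$ be multiplication by $z=e^{2\pi i\theta}$ on the complex Hilbert space $H:=L^2(\mathbb T,\mu)$; it is unitary. The constant function $\mathbf 1\in H$ has (continuous) spectral measure $\mu$, so by the spectral characterization of flight vectors of a unitary operator — the Hilbert-space case of the Jacobs--deLeeuw--Glicksberg decomposition, see \cite{EFHN} — the vector $\mathbf 1$ is a flight vector for $U$.

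Now apply the hypothesis to $T=U$ and $x=\mathbf 1$: the averages $\frac1n\sum_{k=1}^n a_kU^k\mathbf 1$ converge weakly in $H$. Pairing with $\mathbf 1$ and writing $\gamma_k:=\langle U^k\mathbf 1,\mathbf 1\rangle=\int_{\mathbb T}z^k\,d\mu$ (so $\gamma_{n_j}=\tfrac12$), the scalar Ces\`aro averages $\frac1n\sum_{k=1}^n a_k\gamma_k$ converge. But whenever the Ces\`aro averages $\sigma_n:=\frac1n\sum_{k=1}^n b_k$ of a scalar sequence converge, $b_n/n=\sigma_n-\sigma_{n-1}+\sigma_{n-1}/n\to 0$; applying this with $b_k=a_k\gamma_k$ gives $a_n\gamma_n/n\to 0$, and evaluating along $(n_j)$ yields $a_{n_j}/(2n_j)\to0$, contradicting $|a_{n_j}|\ge\delta n_j$. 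Hence the modulated averages of the flight vector $\mathbf 1$ do not converge weakly — contradiction — and therefore $a_n=o(n)$.

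Two ingredients are quoted rather than reproved, both classical: the properties of lacunary Riesz products (continuity and $\widehat\mu(n_j)=\tfrac12$; see Zygmund's \emph{Trigonometric Series}) and the fact that a vector is a flight vector for a unitary operator exactly when its spectral measure is continuous. The substantive point — and the only place a real choice is made — is recognizing that one needs a \emph{non-Rajchman} continuous measure concentrated (in the Fourier sense) along the prescribed lacunary set $(n_j)$; once $\mu$ is in hand, the contradiction is a two-line Tauberian remark. I expect no serious obstacle beyond carefully confirming that $\mathbf 1$ is genuinely a flight vector of $U$, i.e.\ identifying its spectral measure with $\mu$ and invoking the continuity$\,\Rightarrow\,$flight direction of that characterization.
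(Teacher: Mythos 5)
Your proof is correct, and its skeleton is the same as the paper's: locate a sequence $(n_j)$ with $|a_{n_j}|\gtrsim n_j$, produce a \emph{continuous} probability measure $\mu$ on $\mathbb T$ whose Fourier coefficients stay bounded away from $0$ along $(n_j)$, take the multiplication operator on $L^2(\mathbb T,\mu)$ with the flight vector $\mathbf 1$, pair the weakly convergent averages with $\mathbf 1$, and invoke the elementary Tauberian fact that Ces\`aro convergence of $(b_k)$ forces $b_n/n\to 0$. The only real divergence is the source of $\mu$: the paper passes to a subsequence with $n_{j+1}/n_j\to\infty$, which is a rigidity sequence by \cite[Example 3.4]{EG}, so that $\hat\mu(n_j)\to 1$; you instead pass to a lacunary subsequence $n_{j+1}\ge 3n_j$ and use the classical Riesz product, getting $\hat\mu(n_j)=\tfrac12$, which is just as good since all you need is a positive lower bound. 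Your route is more classical and self-contained (Zygmund-type facts about lacunary Riesz products: continuity and the value of the coefficients), whereas the paper's choice of rigidity sequences is deliberate, since the same rigidity machinery is reused in the subsequent examples (where $\hat\mu(n_j)\to 1$ is exploited together with Lemma \ref{W1}(ii), not just a lower bound); for the present proposition alone the two constructions are interchangeable. The remaining ingredients you quote — that a vector of a unitary operator is a flight vector exactly when its spectral measure is continuous (equivalently, the paper's observation that the multiplication operator has no eigenvalues, via \eqref{jdg}) — are standard and used in the same way in the paper.
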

\begin{proof}
The assumption implies that for every unitary $T$ and every flight vector $x \in H$, the sequence
$\big( \frac1n \sum_{k=1}^n a_k \langle T^kx,x\rangle \big)_n$ converges, so 
\begin{equation} \label{to-zero}
\frac1n a_n\langle T^nx,x\rangle \to 0.
\end{equation}

Assume that for some $c>0$ and an increasing sequence $(n_j)_{j\ge 1}$ we have 
$|a_{n_j}| \ge c\cdot n_j$, and by taking a subsequence we may assume that $n_{j+1}/n_j$
tends to $\infty$. Such a sequence is a {\it rigidity sequence} \cite[Example 3.4]{EG}, i.e.
there exists a continuous probability $\mu$ on the unit circle $\mathbb T$ such that
\begin{equation}\label{eq:rigid}
\lim_{j\to\infty}\hat\mu(n_j) = 1.
\end{equation}
For properties and examples of rigidity sequences see Eisner  and Grivaux \cite{EG} and Bergelson, 
Del Junco, Lemanczyk and Rosenblatt \cite{BdJLR}. Note that every rigidity sequence has density zero
\cite[Proposition 2.12]{BdJLR}.

Let $H:=L^2(\mathbb T,\mu)$ and define on $H$ the multiplication operator $T$ by $Tf(z)=zf(z)$,
which is clearly unitary and has no (unimodular) eigenvalues. Hence the function $\mathbf 1$ is a 
flight vector for $T$, and $\langle T^{n_j}\mathbf{1},\mathbf{1}\rangle = \hat\mu(n_j) \to 1$. Hence 
$$
\frac{|a_{n_j}|}{n_j} |\langle T^{n_j}\mathbf{1},\mathbf{1}\rangle | \ge
c |\langle T^{n_j}\mathbf{1},\mathbf{1}\rangle | \to c >0,
$$
contradicting (\ref{to-zero}).
\end{proof}
\medskip

\noindent
{\bf Remarks.} 1. The previous example shows that the proposition is false if we assume the 
weak convergence only for weakly stable vectors, and not for all flight vectors.

2. Comparing Proposition \ref{an=o(n)} with (ii) implies (iii) in Theorem 
\ref{eq-conditions}, we require in the theorem modulation of {\it one} weakly stable vector of
a unitary operator, while the proposition requires only {\it weak} modulation of flight vectors,
but of {\it all} unitary operators. On the other hand, the theorem requires 
$\sup_n \frac1n\sum_{k=1}^n |a_k| < \infty$, which is not necssary for (i), and not assumed in the
proposition. A sequence without this condition, which satisfies the assumption of the 
proposition, is given in \cite[Example 2.5]{BLRT}.
\smallskip

\begin{ex}
{\it Sequences $\ba$ with $\sup \aveN |a_k|<\infty$ which do not modulate flight vectors
of a unitary operator.}
\end{ex}

We produce a way of constructing sequences $(a_n)\subset \R$ with 
\begin{equation}\label{eq:a}
\sup_N \aveN |a_k|<\infty
\end{equation}
which fail to be good modulating sequences for unitary operators without eigenvalues. 

Let $(k_n)\subset\N$ be a rigidity sequence. 
%

Define $k_0:=0$ and
$$
|a_n|:=\begin{cases}
k_l-k_{l-1},\quad &n=k_l\ \text{ for }l\geq 1,\\
0 \quad &\text{otherwise.}
\end{cases}
$$
Then for $k_n \le N < k_{n+1}$ we have
\begin{equation}\label{eq:ave=1}
\frac1N\sum_{l=1}^N |a_l| \le
\frac1{k_n}\sum_{l=1}^{k_n}|a_l|
=\frac1{k_n}\sum_{l=1}^{n}|a_{k_l}|
=\frac1{k_n}\sum_{l=1}^{n} (k_l-k_{l-1})=1
\end{equation}
and (\ref{eq:a}) is satisfied. 

Consider now the complex Hilbert space $L^2(\T,\mu)$ with the multiplicaton operator $T$ defined 
on it by $(Tf)(z):=zf(z)$. Since $\hat{\mu}(n)=\langle T^n \mathbf{1},\mathbf{1}\rangle$, $n\in\N$, 
it is enough to make the averages 
$$
\aveN a_k \hat{\mu}(k)
$$
diverge. We will define a rule for each $s_n:=\text{sign}(a_{k_n})$ to be $\pm 1$ later. Observe
\begin{equation}\label{eq:sum}
\frac1{k_n}\sum_{l=1}^{k_n} a_l \hat{\mu}(l)=
\frac1{k_n}\sum_{l=1}^n a_{k_l} \hat{\mu}(k_l).
\end{equation}
By (\ref{eq:rigid}) and (\ref{eq:ave=1}), Lemma \ref{W1}(ii) yields that the convergence of 
the right hand side of (\ref{eq:sum}) is equivalent to the convergence of 
$$
\frac1{k_n}\sum_{l=1}^{n}a_{k_l}=
\frac1{k_n}\sum_{l=1}^n (k_l-k_{l-1}) s_l
$$
which is the $k_n$th Ces\`aro average of the sequence 
$$
s_1,\ldots,s_1,s_2,\ldots,s_2,s_3,\ldots,s_3,\ldots,
$$
where $s_l$ appears exactly $k_l-k_{l-1}$ times. 
Now it is easy to define $s_l$ to be $1$ or $-1$ such that these averages diverge (first make 
the averages close to $1$, then close to $-1$ etc.). 

Note that every such sequence $(a_n)$ is by construction Ces\`aro divergent and therefore not Hartman.

As a concrete example one can take $k_n:=2^n$, which is a rigidity sequence by Eisner  and Grivaux 
\cite[Prop.~3.9]{EG}, or Bergelson, Del Junco, Lemanczyk  and Rosenblatt \cite[Prop.~3.27]{BdJLR}.

\begin{ex} {\it  A sequence $\ba$ with $\sup \aveN |a_k|<\infty$  and $a_k=o(k)$ which does
 not modulate flight vectors of a unitary operator.}

In the previous example, take a rigidity sequence $(k_n)$ with 
$\lim_{n\to \infty}\frac{k_{n+1}}{k_n}=1$, which exists by \cite[Example 3.17]{EG}. Then,
in addition to (\ref{eq:a}), $(a_n)$ satisfies $a_n=o(n)$, since
$$
\frac{|a_{k_n}|}{k_n}=\frac{k_n-k_{n-1}}{k_n}=1-\frac{k_{n-1}}{k_n}\to 0\quad \text{as }n\to \infty.
$$

This example shows that good modulating sequences for weakly stable operators (as in Proposition 
\ref{stable}) need  not modulate flight vectors of all unitary operators. In particular, the 
sequence in this example does not satisfy (\ref{Wphi}) for any $\phi$ as in Lemma \ref{product}.
\end{ex}
\medskip

\begin{remark}
Note that $(k_n)$ is a rigidity sequence if and only if there exists a weakly mixing system 
$(\Omega,\nu,T)$ which is rigid along $(k_n)$, i.e., $\lim_{n\to\infty}T^{k_n}=I$ in the strong 
operator topology. (In fact, this is one of the several equivalent definitions of rigidity 
sequences). So the unitary operator can be constructed to be the Koopman operator of a weakly 
mixing transformation restricted to the orthogonal complement of $\textbf{1}$ and the averages
$$
\aveN a_k T^kf
$$
diverge for every non-zero $f$. 
\end{remark}

\medskip

\section{Example: Modulation by the von Mangoldt function}

Weighted ergodic theorems for dynamical systems, where the weights are given by some arithmetic 
function, were obtained by Cuny and Weber \cite{CW}. In weighted ergodic theorems we have a 
non-negative sequence of weights $(w_n)$ with {\it diverging} partial sums $W_n:=\sum_{k=1}^n w_k$, 
and one considers convergence of the weighted averages $\frac1{W_n} \sum_{k=1}^n w_k T^kf$. 
When $W_n/n$ converges to a (finite) non-zero limit, convergence of weighted averages and of
averages modulated by $(w_n)$ coincide.

\medskip

In this section we consider modulation by the von Mangoldt function $\Lambda$, defined by
$$
\Lambda(n):=\begin{cases}
\log p, & n=p^k \text{ for some } p \in\mathbb P,\ k\in \mathbb N,\\
0 & \text{otherwise}
\end{cases}
$$
and by its simplified version $\Lambda'$ defined by $ \Lambda'(n):= 1_\P(n)\log n $
($\mathbb P$ denotes the set of primes).
The prime numbers theorem implies $\limaveN \Lambda(k) = 1$  \cite[pp. 56,118]{Da} 
\cite[p. 79]{Ap}, and also
(see \cite[p. 79]{Ap})
\begin{equation}\label{eq:Lambda'-mean}
\limaveN \Lambda'(k) = 1.
\end{equation}
Hence (see also \cite[formula (5)]{FHK}), we have
\begin{equation}\label{eq:Lambda-diff}
\lim_{N\to\infty} \left|\aveN \Lambda(k) - \aveN \Lambda'(k)   \right|=
\lim_{N\to\infty} \aveN (\Lambda(k) - \Lambda'(k))= 0.
\end{equation}


The proof of the next lemma is included for the sake of completeness
(making precise the proof of \cite[Lemma 1]{FHK}).

\begin{lem} \label{fkh-lemma}
For every $C>0$ we have
\begin{equation}\label{eq:Lambda-primes}
\lim_{N\to\infty} \sup_{\|(b_k)\|_\infty \le C}
\left| \aveN \Lambda'(k)b_k -\frac1{\pi(N)} \sum_{p\leq N,\ p\in\P} b_p\right|=0,
\end{equation}
where $\pi(N)$ is the number of primes not exceeding $N$. 
\end{lem}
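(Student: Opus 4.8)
The goal is to compare the weighted average $\frac1N\sum_{k=1}^N \Lambda'(k) b_k$ with the plain average over primes $\frac1{\pi(N)}\sum_{p\le N}b_p$, uniformly over $\|(b_k)\|_\infty\le C$. The natural approach is to write the first sum as a sum over primes, $\frac1N\sum_{p\le N}(\log p)\,b_p$, and then use partial summation (Abel summation) to replace the weight $\frac{\log p}{N}$ by $\frac{1}{\pi(N)}$. The key analytic input is the prime number theorem in the form $\pi(N)\sim N/\log N$, equivalently $\vartheta(N):=\sum_{p\le N}\log p \sim N$, together with the fact that $\frac1N\sum_{p\le N}\log p = \frac1N\sum_{k=1}^N\Lambda'(k)\to 1$, which is \eqref{eq:Lambda'-mean} and is already available in the excerpt.

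First I would fix $(b_k)$ with $|b_k|\le C$ and compare the two quantities by multiplying and dividing: for each prime $p\le N$ the coefficient of $b_p$ in the first sum is $\frac{\log p}{N}$ and in the second it is $\frac{1}{\pi(N)}$, so the difference is bounded by
$$
C\sum_{p\le N}\left|\frac{\log p}{N}-\frac1{\pi(N)}\right|.
$$
This bound is now independent of the particular sequence $(b_k)$, so it suffices to show this sum of absolute values tends to $0$; this reduces the uniform statement to a purely number-theoretic estimate. To handle $\sum_{p\le N}|\frac{\log p}{N}-\frac{1}{\pi(N)}|$ I would split the primes according to whether $\log p$ is close to the "average log" $L_N:=\vartheta(N)/\pi(N)$, which by PNT satisfies $L_N\sim\log N$ and $\pi(N)L_N=\vartheta(N)\sim N$; hence $\frac1{\pi(N)}=\frac{L_N}{\vartheta(N)}$ and the difference becomes $\frac{1}{\vartheta(N)}|{\log p}\cdot\frac{\vartheta(N)}{N}-L_N|$. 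Using $\vartheta(N)/N\to 1$, the sum is asymptotically $\frac{1}{\vartheta(N)}\sum_{p\le N}|\log p-L_N|+o(1)$, and one shows $\frac1{\vartheta(N)}\sum_{p\le N}|\log p - L_N|\to 0$ because most primes up to $N$ lie in $[\,N^{1-\delta},N\,]$, where $\log p$ differs from $\log N\sim L_N$ by at most $\delta\log N$; the contribution of the (sparse) primes below $N^{1-\delta}$ is negligible after dividing by $\vartheta(N)\sim N$. Letting $\delta\to 0$ gives the claim.

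The main obstacle is the estimate $\frac1{\vartheta(N)}\sum_{p\le N}|\log p-L_N|\to 0$: one must show the weights $\log p$ concentrate near their mean. I expect this to follow cleanly from the elementary Chebyshev-type bounds plus PNT, splitting at $N^{1-\delta}$ as above and controlling $\sum_{p\le N^{1-\delta}}\log p = \vartheta(N^{1-\delta})\le N^{1-\delta}\cdot(\text{const})$ against $\vartheta(N)\sim N$; the upper tail $|\log p - L_N|$ for $p\in[N^{1-\delta},N]$ is at most $\delta\log N+|{\log N}-L_N|=\delta\log N+o(\log N)$, and summed against $\log p$ and divided by $\vartheta(N)\sim N$ this yields $O(\delta)+o(1)$. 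Everything else is bookkeeping with Abel summation and the triangle inequality; the uniformity in $(b_k)$ is automatic once the $b_k$ have been pulled out in absolute value at the very first step.
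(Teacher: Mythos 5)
Your proposal is correct, and it follows the same first reduction as the paper --- pull out $\|(b_k)\|_\infty$ so that everything comes down to the purely number-theoretic statement that the $\ell^1$-distance between the two weight sequences, $\sum_{p\le N,\,p\in\mathbb{P}}\bigl|\tfrac{\log p}{N}-\tfrac1{\pi(N)}\bigr|$, tends to $0$ --- but you establish that key estimate by a different device. The paper never splits the primes: writing $\frac1{\pi(N)}=\frac{1+\epsilon}{\pi(N)}-\frac{\epsilon}{\pi(N)}$ and using $\pi(N)\log N/N<1+\epsilon$ (PNT) together with $\log p_j\le\log N$, all coefficients $\frac{1+\epsilon}{\pi(N)}-\frac{\log p_j}{N}$ become nonnegative, so their absolute sum is computed exactly as $(1+\epsilon)-\frac1N\sum_{j\le\pi(N)}\log p_j\le 2\epsilon$ by \eqref{eq:Lambda'-mean}, giving the bound $3\epsilon C$ in a couple of lines. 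You instead prove the $\ell^1$ statement head-on by showing that $\log p$ concentrates at its mean $L_N=\vartheta(N)/\pi(N)\sim\log N$: split at $N^{1-\delta}$, use the Chebyshev bound for the small primes and the bound $\delta\log N+o(\log N)$ for the large ones, then let $\delta\to0$. That works and is perhaps more transparent about why the lemma holds, at the cost of an extra parameter and more bookkeeping; the paper's positivity trick buys brevity and uses only the two limits $\pi(N)\log N/N\to1$ and $\frac1N\sum_{k\le N}\Lambda'(k)\to1$. One small wording point: in your final step the deviation bound for $p\in[N^{1-\delta},N]$ should be summed with counting measure over these at most $\pi(N)$ primes (then $\pi(N)\,\delta\log N\sim\delta N$ against $\vartheta(N)\sim N$ gives $O(\delta)$), not ``against $\log p$'' --- taken literally that weighting would leave a term of size $\delta\log N$, which does not tend to $0$; the intended bookkeeping is clear, though, and the argument goes through.
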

\begin{proof} 
Let $p_1, p_2, \dots$ be the sequence of primes in increasing order.
By the prime numbers theorem, $\lim_N \frac{\pi(N)\log N}N=1$, so for $\epsilon >0$ 
there exists $N_0$ such that $\frac{\pi(N) \log N}N < 1+\epsilon$ for $N>N_0$. 
For $N>N_1 \ge N_0$ also $|1-\frac1N\sum_{j=1}^{\pi(N)} \log p_j | < \epsilon$,
by (\ref{eq:Lambda'-mean}).
For $(b_k)$ bounded with $\|(b_k)\|_\infty \le C$ and $N>N_1$ we therefore have
$$
\left|\frac1{\pi(N)} \sum_{p\le N, p\in \mathbb P} b_p - \frac1N\sum_{k=1}^N \Lambda'(k)b_k \right|=
\left| \sum_{j=1}^{\pi(N)} \Big[ \frac1{\pi(N)} - \frac{\log p_j}N \Big]b_{p_j}\right| \le
$$
$$
\left| \sum_{j=1}^{\pi(N)} \Big[ \frac{1+\epsilon}{\pi(N)} - \frac{\log p_j}N \Big]b_{p_j}\right| +
\epsilon\left| \frac1{\pi(N)} \sum_{j=1}^{\pi(N)}b_ {p_j}\right| \le
$$
$$
 \sum_{j=1}^{\pi(N)} \Big[ \frac{1+\epsilon}{\pi(N)} - \frac{\log p_j}N \Big] \|(b_k)\|_\infty +
\epsilon\|(b_k)\|_\infty =
$$
$$
 \left(1-\frac1N \sum_{j=1}^{\pi(N)} \log p_j\right)\|(b_k)\|_\infty + 2\epsilon\|(b_k)\|_\infty
\le 3\epsilon \|(b_k)\|_\infty  \le 3\epsilon C.
$$
This shows that
\begin{equation} \label{uniform1}
\sup_{\|(b_k)\|_\infty \le C} 
\left|\frac1{\pi(N)} \sum_{p\le N, p\in \mathbb P} b_p - \frac1N\sum_{k=1}^N \Lambda'(k)b_k \right| 
\ \underset{N\to\infty}\longrightarrow 0.
\end{equation}
\end{proof}

{\bf Remark.}
By (\ref{eq:Lambda-diff}) and (\ref{eq:Lambda-primes}), for every $C>0$ we have also
$$
\lim_{N\to\infty} \sup_{\|(b_k)\|_\infty \le C} 
\left|\aveN \Lambda(k)b_k -\frac1{\pi(N)} \sum_{p\leq N,\ p\in\P} b_p\right|=0.
$$

\begin{prop} \label{limits}
The following are equivalent for a power-bounded operator $T$ on a Banach space $X$ and $x \in X$:
\smallskip

(i) $\frac1n\sum_{k=1}^n T^{p_k}x$ converges.
\smallskip

(ii) $\frac1N \sum_{j=1}^{\pi(N)} \log p_j T^{p_j}x=\frac1N \sum_{k=1}^N \Lambda'(k)T^kx$ converges.
\smallskip

(iii) $\frac1N \sum_{k=1}^N \Lambda(k)T^kx$ converges.
\smallskip

\noindent
If either limit exists, then the three limits are the same.
\end{prop}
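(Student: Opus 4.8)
The plan is to prove the chain (i) $\Leftrightarrow$ (ii) $\Leftrightarrow$ (iii), keeping track of the limits throughout, and set $M:=\sup_n\|T^n\|$. The equivalence (ii) $\Leftrightarrow$ (iii), together with equality of the limits, is immediate from (\ref{eq:Lambda-diff}): since $\Lambda(k)\ge\Lambda'(k)\ge 0$ for every $k$,
$$
\Bigl\|\frac1N\sum_{k=1}^N \Lambda(k)T^kx-\frac1N\sum_{k=1}^N \Lambda'(k)T^kx\Bigr\|
\le M\|x\|\cdot\frac1N\sum_{k=1}^N\bigl(\Lambda(k)-\Lambda'(k)\bigr)\underset{N\to\infty}\longrightarrow 0,
$$
so the two averages in (ii) and (iii) converge simultaneously and to the same limit.

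For (i) $\Leftrightarrow$ (ii) I would first replace the index $n$ in (i) by $N$ via the counting function $\pi$. Writing $a_n:=\frac1n\sum_{j=1}^n T^{p_j}x$, note that $\pi$ is non-decreasing with $\pi(N)\to\infty$ and $\pi(p_n)=n$ for every $n$; hence $(a_{\pi(N)})_N$ converges as $N\to\infty$ if and only if $(a_n)_n$ converges as $n\to\infty$, with the same limit — the forward direction is passage to a subsequence tending to infinity, the backward direction is evaluation at $N=p_n$. Since $a_{\pi(N)}=\frac1{\pi(N)}\sum_{p\le N,\,p\in\P}T^px$, statement (i) is thus equivalent to convergence of $\frac1{\pi(N)}\sum_{p\le N,\,p\in\P}T^px$ as $N\to\infty$.

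Next I would upgrade the scalar estimate (\ref{uniform1}) of Lemma \ref{fkh-lemma} to the (bounded) orbit $(T^kx)_k$: for $x^*\in X^*$ with $\|x^*\|\le 1$ the scalar sequence $b_k:=\langle x^*,T^kx\rangle$ satisfies $\|(b_k)\|_\infty\le M\|x\|=:C$, so applying (\ref{uniform1}) with this $C$ and taking the supremum over $\|x^*\|\le 1$ (using $\|v\|=\sup_{\|x^*\|\le1}|\langle x^*,v\rangle|$) gives
$$
\Bigl\|\frac1{\pi(N)}\sum_{p\le N,\,p\in\P}T^px-\frac1N\sum_{k=1}^N \Lambda'(k)T^kx\Bigr\|
\underset{N\to\infty}\longrightarrow 0.
$$
Consequently the two sequences in this display converge simultaneously and to the same limit; combined with the previous paragraph this yields (i) $\Leftrightarrow$ (ii) together with the coincidence of limits, which completes the proof.

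I do not expect a genuine obstacle here: the arithmetic content is already isolated in Lemma \ref{fkh-lemma} and in (\ref{eq:Lambda-diff}) (via the prime number theorem), and what remains — the $\pi(N)$-to-$n$ reindexing and the duality passage from scalars to vectors — is routine bookkeeping. The one place that needs a moment's care is verifying that $\pi(p_n)=n$ genuinely recovers the full sequence $(a_n)$ from $(a_{\pi(N)})_N$, and that the supremum over $\|x^*\|\le 1$ of the scalar estimate is legitimate.
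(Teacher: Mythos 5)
Your argument is correct and is essentially the paper's own proof: (i)$\Leftrightarrow$(ii) via the duality passage from the uniform scalar estimate of Lemma \ref{fkh-lemma}, and (ii)$\Leftrightarrow$(iii) via (\ref{eq:Lambda-diff}) with $\Lambda\ge\Lambda'$. The only difference is that you make explicit the $\pi(N)$-to-$n$ reindexing, which the paper leaves implicit.
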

\begin{proof} 
To prove the equivalence of (i) and (ii), let $M := \sup_n \|T^n\|$, and fix $x \in X$. 
For $x^* \in X^*$ put $b_k(x^*)= \langle x^*,T^kx\rangle$. Then (\ref{uniform1}) yields
$$
\left\| \frac1{\pi(N)}\sum_{k=1}^{\pi(N)} T^{p_k}x-\frac1N \sum_{k=1}^N \Lambda'(k)T^kx \right\| =
\sup_{\|x^*\| \le 1} 
\left| \frac1{\pi(N)}\sum_{k=1}^{\pi(N)} b_{p_k}(x^*) -
\frac1N \sum_{k=1}^N \Lambda'(k)b_k(x^*) \right|  \le
$$
$$
\sup_{\|(b_k)\|_\infty \le M\|x\|} 
\left|\frac1{\pi(N)} \sum_{p\le N, p\in \mathbb P} b_p - \frac1N\sum_{k=1}^N \Lambda'(k)b_k \right| 
\to 0.
$$

For the equivalence of (ii) and (iii), we use (\ref{eq:Lambda-diff}), noting that 
$\Lambda(k) \ge \Lambda'(k)$, and obtain
$$
\left\| \frac1N \sum_{k=1}^N \Lambda(k)T^kx - \frac1N \sum_{k=1}^N \Lambda'(k)T^kx \right\| \le
\frac1N \sum_{k=1}^N \left\|\big(\Lambda(k) -  \Lambda'(k)\big) T^k x \right\|\le
$$
$$
\left(\frac1N \sum_{k=1}^N \Lambda(k) - \frac1N \sum_{k=1}^N \Lambda'(k) \right) M\|x\|\to 0.
$$

Our proof shows that if either limit exists, then the three limits are the same.
\end{proof}

\noindent
{\bf Remarks.} 1. The above proof shows also the equivalence of the above three conditions when 
the convergence is taken {\it weakly.}

2. The equivalence of (i) with (ii), or (iii), transforms the averages 
along the primes into modulated (or, by \eqref{eq:Lambda'-mean}, weighted) averages.
\medskip

For $\lambda \in \mathbb T$, (\ref{eq:Lambda-diff}) yields
$\lim_N\big|\frac1N\sum_{k=1}^N \Lambda(k)\lambda^k - \frac1N\sum_{k=1}^N \Lambda'(k)\lambda^k\big|=0$,
so $(\Lambda(n))$ is Hartman if and only if $(\Lambda'(n))$ is, and in that case they have 
the same Fourier-Bohr coefficients.

\begin{theo} \label{prime-average}
(i) The sequences $(\Lambda(n))$ and $(\Lambda'(n))$ are Hartman, and have the same Fourier-Bohr
coefficients $c(\lambda)$.

(ii) For every contraction $T$ on a complex Hilbert space $H$ and $x \in H$, we have the 
convergence
$$
\lim_{n \to \infty} \frac1n \sum_{j=1}^n T^{p_j}x =
\lim_{N \to\infty} \frac1N \sum_{j=1}^{\pi(N)} \log p_j T^{p_j}x =
\limaveN \Lambda(k) T^k x =
$$
\begin{equation} \label{identify}
\sum_{\lambda \in \sigma(\Lambda(n))} c(\lambda)E(\bar\lambda)x, 
\end{equation}
where $E(\lambda)$ is the orthogonal projection on the eigenspace corresponding to 
$\lambda \in \mathbb T$.

In particular, for every flight vector $x\in H$,
$$
\lim_{n \to \infty} \frac1n \sum_{j=1}^n T^{p_j}x =
\lim_{N \to\infty} \frac1N \sum_{j=1}^{\pi(N)} \log p_j T^{p_j}x =
\limaveN \Lambda(k) T^k x =0.
$$
\end{theo}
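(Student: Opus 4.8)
The plan is to dispatch part (i) by classical analytic number theory and part (ii) by a spectral argument on the Hilbert space, reducing the latter, via Proposition~\ref{limits}, to the modulated averages $\aveN\Lambda(k)T^kx$.

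For (i), I would show $(\Lambda(n))$ is Hartman directly. For a root of unity $\lambda=e^{2\pi i a/q}$, splitting $\aveN\Lambda(k)\bar\lambda^k$ by residue classes mod $q$ and applying the prime number theorem in arithmetic progressions (the prime powers $k=p^j$ with $p\mid q$ contributing only $o(N)$) yields the existence of $c(\lambda)$; for $\lambda$ not a root of unity, $\aveN\Lambda(k)\bar\lambda^k\to 0$ by Vinogradov's estimate for exponential sums over primes (see \cite{Da}), so $c(\lambda)=0$. Then $(\Lambda'(n))$ is Hartman with the same Fourier--Bohr coefficients by \eqref{eq:Lambda-diff}, as already observed before the theorem, and by Kahane \cite{Ka} the set $\sigma(\Lambda(n)):=\{\lambda\in\T:c(\lambda)\ne 0\}$ is countable.

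For (ii), fix a contraction $T$ on the complex Hilbert space $H$ and $x\in H$; by Proposition~\ref{limits} it suffices to prove $\aveN\Lambda(k)T^kx$ converges and to identify the limit. The convenient route via Theorem~\ref{flight} is unavailable, since $(\Lambda(n))$ does not satisfy \eqref{Wphi} for any admissible $\phi$ (already the primes in $(\sqrt N,N]$ force $\aveN\Lambda(k)\phi(\Lambda(k))\gtrsim\phi(\tfrac12\log N)\to\infty$), so I would exploit the Hilbert-space structure. Pass to a unitary dilation $U$ of $T$ on $K\supseteq H$ with $T^n=P_HU^n|_H$ for $n\ge 0$, where $P_H$ is the orthogonal projection onto $H$, and set $g_N(\lambda)=\aveN\Lambda(k)\lambda^k$; by the spectral theorem $\aveN\Lambda(k)U^kx=\int_\T g_N(\lambda)\,dE_U(\lambda)x$. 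Since $g_N(\lambda)\to c(\bar\lambda)$ pointwise on $\T$ by (i) and $|g_N(\lambda)|\le\sup_N\aveN\Lambda(k)<\infty$, dominated convergence against the finite measure $\langle E_U(\cdot)x,x\rangle$ gives
$$
\aveN\Lambda(k)U^kx\ \longrightarrow\ \int_\T c(\bar\lambda)\,dE_U(\lambda)x=\sum_\lambda c(\bar\lambda)\,E_U(\{\lambda\})x,
$$
the norm-convergent sum running over the countably many $\lambda$ with $c(\bar\lambda)\ne 0$, the non-atoms of the spectral measure dropping out. Applying $P_H$ and using $T^nx=P_HU^nx$ then shows $\aveN\Lambda(k)T^kx\to\sum_\lambda c(\bar\lambda)P_HE_U(\{\lambda\})x$, so the limit exists.

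The remaining step, which I expect to be the main obstacle, is to identify $P_HE_U(\{\lambda\})x$ for $|\lambda|=1$, i.e. to see that only the unimodular eigenvectors of $T$ survive in the limit. For this I would invoke the Sz.-Nagy--Foias structure of the minimal unitary dilation: decomposing $T=T_0\oplus T_1$ into its completely non-unitary and unitary parts one has $U=U_0\oplus T_1$ with $U_0$ of absolutely continuous spectrum, so $\ker(U-\lambda I)=\ker(T_1-\lambda I)=\ker(T-\lambda I)\subseteq H$ for every $|\lambda|=1$ (a unimodular eigenvector of $T$ cannot lie in the completely non-unitary part, since it would span a reducing subspace on which $T_0$ is unitary). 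Hence $E_U(\{\lambda\})$ has range inside $H$ and acts there as the orthogonal projection onto $\ker(T-\lambda I)$, giving $P_HE_U(\{\lambda\})x=E(\lambda)x$ for $x\in H$. Reindexing $\mu=\bar\lambda$ turns the limit into $\sum_{\mu\in\sigma(\Lambda(n))}c(\mu)E(\bar\mu)x$, which is \eqref{identify}, and Proposition~\ref{limits} transfers this value to the other two averages. Finally, if $x$ is a flight vector then $\langle x,y\rangle=0$ for every unimodular eigenvector $y$ of $T$ (from $Ty=\mu y$ with $|\mu|=1$ one gets $T^*y=\bar\mu y$, whence $\mu^{n_j}\langle x,y\rangle=\langle T^{n_j}x,y\rangle\to 0$), so $E(\bar\mu)x=0$ for all $\mu$ and the limit is $0$. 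Apart from part (i) and Kahane's countability, the decisive structural input is precisely that the minimal unitary dilation creates no new point spectrum on $\T$, so that its atomic part already lives in $H$ and reproduces the eigenprojections of $T$.
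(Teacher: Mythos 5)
Your argument is correct, but it reaches the theorem by a different route than the paper. For (i) you prove Hartman-ness of $(\Lambda(n))$ directly (prime number theorem in arithmetic progressions for roots of unity, Vinogradov for the rest) and then pass to $(\Lambda'(n))$ via \eqref{eq:Lambda-diff}; the paper goes the opposite way, first establishing convergence of $\frac1{\pi(N)}\sum_{p\le N}\lambda^p$ (Vinogradov--Weyl for $\lambda$ not a root of unity, Prachar's formula for roots of unity) and then obtaining $c(\lambda)$ for $\Lambda'$ through the transfer Lemma \ref{fkh-lemma}, and finally for $\Lambda$ through \eqref{eq:Lambda-diff}. For (ii) the difference is more substantial: the paper deduces norm convergence of $\frac1n\sum_{k\le n}T^{p_k}x$, together with the identification of the limit by eigenprojections, from the convergence of $\frac1n\sum_{k\le n}\lambda^{p_k}$ for all $\lambda\in\mathbb T$ by citing \cite[Theorem 4.4]{LOT}, and only then transfers to the $\Lambda$- and $\Lambda'$-modulated averages via Proposition \ref{limits}; you instead work directly on $\frac1N\sum_{k\le N}\Lambda(k)T^kx$, using the minimal unitary dilation, the spectral theorem plus dominated convergence (correctly applied to $\int|g_N-c(\bar\cdot)|^2\,d\langle E_U(\cdot)x,x\rangle$ for norm convergence), and the Sz.-Nagy--Foias theorem that the minimal unitary dilation of a completely non-unitary contraction has absolutely continuous spectral measure, so that the atoms of $E_U$ on $\mathbb T$ coincide with the unimodular eigenspaces of $T$ inside $H$; then you transfer to the prime averages by Proposition \ref{limits} in the reverse direction (legitimate, since that proposition is an equivalence). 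Your dilation argument is in effect a self-contained proof of the special case of the cited LOT result that is needed here, at the price of invoking Sz.-Nagy--Foias structure theory, while the paper's citation keeps the proof short; your version has the minor advantage of making the identification \eqref{identify} and the vanishing on flight vectors (via orthogonality of flight vectors to unimodular eigenvectors, which you justify correctly) completely explicit. The only points to watch are that the absolute-continuity step genuinely requires the \emph{minimal} dilation, which you do use, and the uniform bound $\sup_N\frac1N\sum_{k\le N}\Lambda(k)<\infty$ needed for domination, which follows from the prime number theorem as noted before the theorem.
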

\begin{proof}
We first prove the assertion (i) for $(\Lambda'(n))$.
By (\ref{eq:Lambda'-mean}), $\limaveN \Lambda'(k) = 1$, so by positivity,
$\sup_N \frac1N\sum_{k=1}^N|\Lambda'(k)| <\infty$.

It is easily checked that 
$\frac1{\pi(N)} \sum_{p\leq N,\ p\in\P} \lambda^p = \frac1{\pi(N)}\sum_{k=1}^{\pi(N)} \lambda^{p_k}$ 
for $\lambda \in \mathbb T$. 
The uniform distribution mod 1 of $(p_k\theta)$ for every $\theta$ irrational, proved by Vinogradov, 
is equivalent (by Weyl's criterion) to the convergence to zero of $\frac1n\sum_{k=1}^n \lambda^{p_k}$
for every $\lambda \in \T$ not a root of unity, which yields
$\frac1{\pi(N)} \sum_{p\leq N,\ p\in\P} \lambda^p \to 0$ for such $\lambda $; see also 
\cite[Lemma 4.1]{CEF}. For $\lambda \in \T$ a root of unity, dividing  by $\pi(N)$ formula (3.5) 
in \cite[p. 180]{Pr}, we obtain that $\frac1{\pi(N)} \sum_{p\leq N,\ p\in\P} \lambda^p$ 
converges (with an identification of the limit).

Hence, by (\ref{eq:Lambda-primes}), for every $\lambda\in\T$ there exists the limit
\begin{equation} \label{coeff}
 c(\lambda) := \limaveN \Lambda'(k)\lambda^k 
=\lim_{N\to\infty}\frac1{\pi(N)} \sum_{p\leq N,\ p\in\P} \lambda^p.
\end{equation}
This shows that $(\Lambda'(n))$ is Hartman. 
(Note that $c(\lambda)=0$ for every $\lambda$ which is not a root of unity; for an explicit formula 
when $\lambda$ is a root of unity see, e.g., \cite[p. 180]{Pr} or \cite[equation (7)]{CEF}).  
Since $\Lambda(n) \ge \Lambda'(n)$, we conclude by (\ref{eq:Lambda-diff}) that also $(\Lambda(n))$
is Hartman, with the same coefficients $c(\lambda)$.

(ii) The convergence of $\frac1n\sum_{k=1}^n \lambda^{p_k}$ for every 
$\lambda \in \mathbb T$ obtained above yields, by \cite[Theorem 4.4]{LOT}, that for every
contraction $T$ on a Hilbert space $\frac1n\sum_{k=1}^n T^{p_k}x$ converges strongly for every 
$x \in H$, and since in (\ref{coeff}) $c(\lambda) \ne 0$ only for countably many $\lambda$ 
(roots of unity),
$$
\lim_{N \to\infty} \frac1{\pi(N)} \sum_{j=1}^{\pi(N)}  T^{p_j}x = \limaveN T^{p_k}x =
\sum_{\lambda \in \mathbb T \text{ root of 1}} c(\lambda)E(\bar\lambda)x, \quad \forall x\in H.
$$
Existence of the other limits in (ii) and their equality follow from Proposition \ref{limits}.
%
\end{proof}

\noindent
{\bf Remarks.} 
%
%
1. If $T$ is invertible on $H$ with $T$ and $T^{-1}$ both power-bounded, then the assertion of (ii) 
holds, since by Sz.-Nagy \cite{S-N} $T$ is similar to a unitary operator.

2. If $T$ is almost periodic on a complex Banach space, the averages along the primes 
$\frac1{\pi(N)} \sum_{j=1}^{\pi(N)}  T^{p_j}x $ converge strongly, because for $T$ almost periodic
$x \in X$ is a flight vector if and only if $\|T^n x\| \to 0$, 
and convergence for the eigenvectors  follows from (\ref{coeff}).
Proposition \ref{limits} yields convergence of the modulated averages  
$\frac1N \sum_{k=1}^N \Lambda(k)T^k x$ and $\frac1N \sum_{k=1}^N \Lambda'(k)T^k x$.

\begin{theo} \label{realH}
Let $T$ be a contraction on a real Hilbert space $H$. 
Then for every $x \in H$ we have norm convergence of 
$\frac1n\sum_{j=1}^n T^{p_j}x$ and of $\frac1N\sum_{j=1}^{\pi(N)}\log p_j T^{p_j}x$.
\end{theo}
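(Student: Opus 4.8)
The plan is to reduce the real-Hilbert-space statement to the complex case already established in Theorem \ref{prime-average}(ii). The standard device is complexification: given a contraction $T$ on a real Hilbert space $H$, form the complex Hilbert space $H_\C := H \oplus iH$ with the inner product $\langle u_1 + iv_1, u_2+iv_2\rangle := \langle u_1,u_2\rangle + \langle v_1,v_2\rangle + i(\langle v_1,u_2\rangle - \langle u_1,v_2\rangle)$, and extend $T$ to $T_\C(u+iv) := Tu + iTv$. One checks that $T_\C$ is a contraction on $H_\C$ (here it matters that $T$ is a contraction, not merely power-bounded, since the complexified norm of an operator can in general exceed the real one by a bounded factor, but for Hilbert-space contractions $\|T_\C\| = \|T\|\le 1$). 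Then for $x \in H \subset H_\C$, Theorem \ref{prime-average}(ii) applied to $T_\C$ gives norm convergence in $H_\C$ of $\frac1n\sum_{j=1}^n T_\C^{p_j}x$; since $T_\C^{p_j}x = T^{p_j}x \in H$ for $x\in H$ and the norm on $H$ agrees with the restriction of the norm on $H_\C$, the averages $\frac1n\sum_{j=1}^n T^{p_j}x$ form a Cauchy sequence in $H$, hence converge. The second averages $\frac1N\sum_{j=1}^{\pi(N)}\log p_j\, T^{p_j}x$ converge by Proposition \ref{limits}, which holds for any power-bounded operator on a Banach space (real or complex) and hence applies directly to $T$ on $H$.

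The key steps in order: first, set up the complexification $H_\C$ and verify $\|T_\C\|\le 1$ (the one genuine computation, but routine for Hilbert spaces); second, observe that $H$ embeds isometrically in $H_\C$ as a real-linear subspace invariant under $T_\C$, with $T_\C|_H = T$; third, invoke Theorem \ref{prime-average}(ii) to get convergence of the complex prime averages for $T_\C$ at vectors of $H$, and note the limit again lies in $H$ by closedness of $H$ in $H_\C$; fourth, transfer to the weighted/logarithmic averages via Proposition \ref{limits}.

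The main obstacle — really the only point needing care — is confirming that complexification does not spoil the contraction property. For a general Banach space the complexified norm is only equivalent (within a factor of $2$) to the real one, which would leave $T_\C$ merely power-bounded and thus outside the scope of Theorem \ref{prime-average}(ii); but that theorem's Remark 1 notes that power-bounded invertible operators are also covered, and in any case on a Hilbert space the natural complexification is isometric, so $T_\C$ is a genuine contraction. An alternative, avoiding complexification altogether, would be to rerun the proof of Theorem \ref{prime-average}(ii) directly over $\R$: the input is the convergence of $\frac1n\sum_{k=1}^n \lambda^{p_k}$ for $\lambda\in\T$ (a scalar fact independent of the ground field) fed into \cite[Theorem 4.4]{LOT}; if that cited theorem is stated for real Hilbert spaces too, no complexification is needed. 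I would present the complexification argument as the cleaner route.
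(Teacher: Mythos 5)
Your proposal is correct and follows essentially the same route as the paper: complexify $H$ to $H_\C$ with the isometric Hilbert-space complexification, note $\|T_\C\|=\|T\|\le 1$, and apply Theorem \ref{prime-average}(ii) to $T_\C$ at vectors of $H$ (the paper even records the same inner product and the identity $\|T_\C\|=\|T\|$). The only cosmetic difference is that you route the logarithmic averages through Proposition \ref{limits}, while the paper reads both limits off Theorem \ref{prime-average}(ii) directly; both are fine.
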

\begin{proof} 
Given a real Hilbert space $H$, we define its complexification $H_C :=H \times H$ with 
the usual addition of pairs, and multiplication by complex scalars defined by
$$(\alpha +i\beta)(x,y) := (\alpha x - \beta y, \alpha y + \beta x).$$
A scalar product on $H_C$ is defined by 
$$
\langle (x,y),(u,v)\rangle := 
\langle x,u\rangle + \langle y,v \rangle +i(\langle y,u \rangle - \langle x,v \rangle)\ ,
$$
so the norm on $H_C$ is given by $\|(x,y)\|^2 =\|x\|^2 + \|y\|^2$ (see \cite[p. 98]{MDW} for proofs).
Given a linear operator $T$ on (the real) $H$ we define its extension to $H_C$ by 
$T_C(x,y):=(Tx,Ty)$. Clearly $\|T_C\| \le \|T\|$, and since $\|T_C(x,0)\|=\|Tx\|$, we have 
$\|T_C\|=\|T\|$.

Given a contraction on the real Hilbert space $H$, we apply Theorem \ref{prime-average}(ii) to 
$T_C$ on $H_C$ and obtain the assertion of our theorem.
\end{proof}

{\bf Remarks.} 1. The deep result of Bourgain \cite{B2},\cite{B} and Wierdl \cite{W} on the 
pointwise ergodic theorem along primes for $f \in L^r$ ($r>1$) of a probability preserving system 
(together with Wierdl's \cite[Lemma 1]{W}) shows that for any probability 
preserving system $(\Omega,\Sigma,\mu,\tau)$ and $f \in L^r(\Omega,\mu)$, $r>1$, we have that
$ \frac1{N} \sum_{j=1}^{\pi(N)} \log p_jf(\tau^{p_j}\omega)$ converges a.e. (see formula (2) in 
\cite{W}), i.e. $(\Lambda'(n))$ is in fact a good weight sequence for the pointwise ergodic theorem 
in $L^r$. For $r=2$ the limit is identified by the series in (\ref{identify}) (where 
$Tf =f\circ\tau$).  It can be shown that also $(\Lambda(n))$ is a good weight sequence for the 
pointwise ergodic theorem in $L^r$.

2. For pointwise ergodic theorems with other arithmetic weights see El Abdalaoui, Kulaga-Przymus, 
Lema\'nczyk, de la Rue \cite[Section 3]{EKLR} and Cuny, Weber \cite{CW}.

3. In his thesis \cite{W2}, Wierdl extended the result of \cite{W} to a.e. convergence of averages 
of $L^2$ functions along fixed powers of primes. This result was rediscovered by Nair \cite{N1} 
(also for $L^2$ functions), and extended in \cite{N2} to averages  of $L^p$ functions ($p>1$) 
along $Q(p_j)$, where $Q(t)$ is a polynomial with non-negative integer coefficients.

\begin{theo}
Let $Q(t)$ be a polynomial with non-negative integer coefficients.
Then for every contraction $T$ on a (real or complex) Hilbert space $H$ and $x \in H$, 
the averages $\frac1n\sum_{j=1}^n T^{Q(p_j)} x$ converge in norm.
\end{theo}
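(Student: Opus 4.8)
The plan is to mimic the scheme used for Theorem \ref{prime-average}(ii), replacing the prime sequence $(p_j)$ by $(Q(p_j))$ throughout. The starting point is the Jacobs--deLeeuw--Glicksberg decomposition (\ref{jdg}) for the contraction $T$ on the complex Hilbert space $H$ (for the real case one first passes to the complexification $H_C$ exactly as in the proof of Theorem \ref{realH}). On the reversible (eigenvector) part it suffices to handle a single eigenvector $Ty=\lambda y$ with $|\lambda|=1$, where the average becomes $\frac1n\sum_{j=1}^n\lambda^{Q(p_j)}y$; thus the whole question on the reversible part reduces to showing that $\frac1n\sum_{j=1}^n\lambda^{Q(p_j)}$ converges for every $\lambda\in\mathbb T$. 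On the flight-vector part one needs $\|\frac1n\sum_{j=1}^nT^{Q(p_j)}x\|\to 0$.

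For the scalar convergence on the unimodular eigenvalues, write $\lambda=e^{2\pi i\alpha}$. If $\alpha$ is irrational, then $\frac1n\sum_{j=1}^ne^{2\pi i\alpha Q(p_j)}\to 0$: this is precisely the statement that the sequence $(Q(p_j)\alpha)$ is uniformly distributed mod $1$, which follows from the Weyl-type equidistribution results for polynomials evaluated at primes (Rhin, or Nair \cite{N1}, \cite{N2}, building on Vinogradov), valid since $Q$ has non-negative integer coefficients and hence $Q(p_j)\to\infty$ along an admissible set. If $\alpha=a/q$ is rational, then $Q(p_j)\bmod q$ depends only on $p_j\bmod q$, so $\lambda^{Q(p_j)}$ is an eventually periodic function of $p_j$ along the residue classes prime to $q$, and convergence of the Cesàro averages follows from the prime number theorem in arithmetic progressions (Dirichlet density of primes in each residue class mod $q$), exactly as the root-of-unity case was handled in Theorem \ref{prime-average}. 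This gives convergence of $\frac1n\sum_{j=1}^n\lambda^{Q(p_j)}$ for all $\lambda\in\mathbb T$, hence convergence of $\frac1n\sum_{j=1}^nT^{Q(p_j)}y$ for each unimodular eigenvector, and then on the closed span of all of them since $\sup_n\|\frac1n\sum_{j=1}^nT^{Q(p_j)}\|\le 1$.

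For the flight-vector part I would invoke the same spectral/orthogonality argument that underlies \cite[Theorem 4.4]{LOT}: the convergence of $\frac1n\sum_{j=1}^n\lambda^{Q(p_j)}$ for every $\lambda\in\mathbb T$, combined with the fact that the limit vanishes off a countable set (here, the limit is $0$ for every non-root-of-unity $\lambda$, and the roots of unity are countable), is enough to force $\frac1n\sum_{j=1}^nT^{Q(p_j)}x\to 0$ in norm for every flight vector $x$. Concretely, using the spectral measure $\mu_x$ of $x$ for the unitary dilation (or directly for $T$ on the unimodular part, noting a flight vector has continuous spectral measure on $\mathbb T$), one has $\|\frac1n\sum_{j=1}^nT^{Q(p_j)}x\|^2 = \int_{\mathbb T}|\frac1n\sum_{j=1}^n\lambda^{Q(p_j)}|^2\,d\mu_x(\lambda)$ (plus a vanishing flight-contribution from the dilation), and bounded convergence together with the pointwise limit $0$ $\mu_x$-a.e. (since $\mu_x$ has no atoms at roots of unity for a flight vector) gives the conclusion. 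Adding the two parts yields convergence of $\frac1n\sum_{j=1}^nT^{Q(p_j)}x$ for all $x\in H$.

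The main obstacle is the equidistribution input: one must ensure that for irrational $\alpha$ the averages $\frac1n\sum_{j=1}^ne^{2\pi i\alpha Q(p_j)}$ genuinely tend to zero, which is the non-elementary ingredient (Vinogradov-type estimates for exponential sums over primes with polynomial phase, as in Nair \cite{N1}, \cite{N2}); everything else is the JdLG splitting plus the orthogonality/spectral-measure argument already used for the linear case $Q(t)=t$ in Theorem \ref{prime-average}. A secondary technical point is bookkeeping the root-of-unity case when $Q$ has a nonzero constant term or when $\gcd$ considerations make $Q(p)\bmod q$ nonconstant on the units mod $q$; this only affects the identification of the limit, not its existence, and is handled by the prime number theorem in arithmetic progressions.
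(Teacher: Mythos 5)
Your proposal is correct and follows essentially the same route as the paper: convergence of the scalar averages $\frac1n\sum_{j=1}^n\lambda^{Q(p_j)}$ for every $\lambda\in\mathbb T$ (Vinogradov/Rhin equidistribution for irrational $\alpha$, the prime number theorem in arithmetic progressions for roots of unity), then transfer to Hilbert space contractions via \cite[Theorem 4.4]{LOT}, with complexification for the real case exactly as in Theorem \ref{realH}. The paper simply cites \cite[Theorem 4.4]{LOT} for all of $H$ at once, so your additional JdLG splitting and the flight-vector spectral-measure discussion (whose details, e.g.\ the exact norm identity through the dilation, are the shakiest part of your sketch) are not needed for the mere norm convergence asserted here.
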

\begin{proof} 
We first prove the theorem for a complex Hilbert space $H$. For 
$\lambda = e^{2\pi i\alpha} \in\mathbb T$ which is not a root of unity (i.e $\alpha$ irrational), 
the sequence $(\alpha Q(p_j))$ is equidistributed  modulo 1, by Vinogradov (see Rhin \cite{Rh}), 
so by Weyl's criterion $\frac1n\sum_{j=1}^n \lambda^{Q(p_j)} \to 0$.

For a root of unity $\lambda = e^{2\pi i\frac{b}q}$ with $(b,q)=1$, note that if $p\equiv a$
modulo $q$, then $\lambda^{Q(p)}=\lambda^{Q(a)}$. 
We now use the prime number theorem in arithmetic progressions \cite[p. 598]{Wa} (see also 
\cite[p. 133]{Da}).  Let $\pi(x,q,a):= |\{p\in\mathbb P: p\le x,\ p\equiv a\ \mod q\}|$ and denote
$Li(x):= \int_2^x\frac1{\log t}dt$.  If $(a,q)=1$, then for  some $s<1$, for large $x$ we have 
\begin{equation} \label{walfisz}
\Big| \pi(x,q,a) - \frac{Li(x)}{\phi(q)}\Big| \le A\frac x{\e^{A\sqrt{x}}} +A\frac{x^s}{\log x}.
\end{equation}
Using the notation $e(x):=\e^{2\pi ix}$ we have 
$$
\frac1N \sum_{j=1}^N \lambda^{Q(p_j)} = \frac1N \sum_{j=1}^N e\big(Q(p_j)\frac{b}q\big) = 
\sum_{a=0}^{q-1} \frac1N \underset{p_j \equiv a \mod q } {\sum_{j\le N}} e\big(Q(a)\frac{b}q\big) =
$$
$$
\sum_{a=0}^{q-1} e\big(Q(a)\frac{b}q\big) \cdot \frac{\pi(p_N ,q,a)}N.
$$
Since $\pi(x,q,a)$ is at most 1 when $(a,q) >1$, we compute the limit as $N\to \infty$ for $(a,q)=1$;
since $\frac{Li(x)}{\pi(x)} \to 1$ as $x \to \infty$ (e.g. \cite[p. 102]{Ap}) and $\pi(p_N)=N$, 
(\ref{walfisz}) yields 
$$
\lim_{N\to \infty} \frac{\pi(p_N ,q,a)}N = 
\frac1{\phi(q)}\lim_{N\to\infty} \frac{Li(p_N)}{\pi(p_N)} = \frac1{\phi(q)},
$$  
so that
$$
\lim_{N\to \infty} \frac1N \sum_{j=1}^N \lambda^{Q(p_j)} =
\frac1{\phi(q)} \underset{(a,q)=1}{\sum_{0<a<q}} e\big(Q(a)\frac{b}q\big).
$$
We therefore have that for every $\lambda \in \mathbb T$ the sequence 
$\frac1n\sum_{j=1}^n \lambda^{Q(p_j)}$ converges. By \cite[Theorem 4.4]{LOT}, 
$\frac1n \sum_{j=1}^n T^{Q(p_j)}x$ converges for every contraction $T$ on $H$ and $x \in H$.
\medskip

When $H$ is a real Hilbert space, we use the complexification, as in the proof of Theorem \ref{realH},
to deduce the result from the complex case.
\end{proof}

{\bf Remark.} A quick proof for the complex case, based on later results in ergodic theory, is this:
For any $\lambda \in \mathbb T$, we apply Nair's result \cite[Theorem 1]{N2} to the unit circle 
rotation $\theta(z)=\lambda z$ and the function $f(z)=z$, and deduce that 
$\frac1n\sum_{j=1}^n \lambda^{Q(p_j)} $ converges. Theorem 4.4 of \cite{LOT} yields
that if $T$ is a contraction on $H$, the desired convergence holds.
\bigskip

By Theorems \ref{prime-average} and \ref{realH}, $(\Lambda'(n))_n$ is a Hartman sequence 
which modulates all contractions in a (real or complex) Hilbert space.
The following shows that $(\Lambda'(n))_n$ cannot be approximated in the $W_1$-semi-norm 
by sequences satisfying (\ref{Wphi}). Thus the set of sequences which modulate all 
flight vectors of Hilbert space contractions is strictly larger than 
$\overline{\mathcal{A}}^{W_1}$ defined in Section 2.
\begin{prop}
Every $(a_n)\in \mathcal{A}$ satisfies $\|a_n-\Lambda'(n)\|_{W_1}\geq \frac{1}{2}$. In particular, 
both $(\Lambda'(n))$ and $(\Lambda(n))$ do not belong to $\overline{\mathcal{A}}^{W_1}$.
\end{prop}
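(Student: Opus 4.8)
The plan is to exploit that $\Lambda'$ is supported on the primes, a set of density zero, and to feed the indicator of the primes into Lemma \ref{product}. Fix $\ba=(a_n)\in\mathcal A$; by definition $\ba$ satisfies (\ref{Wphi}) for some admissible $\phi$, and the first Remark after Lemma \ref{product} then gives $\sup_n\frac1n\sum_{k=1}^n|a_k|<\infty$, so in particular $\ba\in W_1$. The crucial step is to apply Lemma \ref{product} with $b_k:=1_{\mathbb P}(k)$: this sequence is bounded, and by the prime number theorem $\frac1n\sum_{k=1}^n b_k=\pi(n)/n\to 0$, hence Lemma \ref{product} yields
$$
\frac1n\sum_{p\le n}|a_p|\ \longrightarrow\ 0 \qquad (n\to\infty),
$$
i.e. every sequence in $\mathcal A$ carries asymptotically negligible mass on the primes, while $\Lambda'$ puts all of its mass there.

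Next I would drop the non-prime indices and use the reverse triangle inequality $|a_p-\log p|\ge \log p-|a_p|$ to estimate
$$
\frac1N\sum_{k=1}^N|a_k-\Lambda'(k)|\ \ge\ \frac1N\sum_{p\le N}|a_p-\log p|\ \ge\ \frac1N\sum_{k=1}^N\Lambda'(k)\ -\ \frac1N\sum_{p\le N}|a_p| .
$$
Letting $N\to\infty$, the first term on the right converges to $1$ by (\ref{eq:Lambda'-mean}) and the second to $0$ by the previous step, so $\|\ba-\Lambda'\|_{W_1}=\limsup_N\frac1N\sum_{k=1}^N|a_k-\Lambda'(k)|\ge 1\ge\frac12$; thus the stated constant is in fact not optimal.

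For the second assertion: if $(\Lambda'(n))\in\overline{\mathcal{A}}^{W_1}$ there would be $\ba^{(j)}\in\mathcal A$ with $\|\ba^{(j)}-\Lambda'\|_{W_1}\to 0$, contradicting the bound above. For $(\Lambda(n))$, note that $\Lambda\ge\Lambda'\ge 0$ and (\ref{eq:Lambda-diff}) gives $\|\Lambda-\Lambda'\|_{W_1}=\limsup_N\frac1N\sum_{k=1}^N(\Lambda(k)-\Lambda'(k))=0$; hence, by the triangle inequality for the semi-norm $\|\cdot\|_{W_1}$, every $\ba\in\mathcal A$ satisfies $\|\ba-\Lambda\|_{W_1}\ge\|\ba-\Lambda'\|_{W_1}-\|\Lambda-\Lambda'\|_{W_1}\ge\frac12$, so $(\Lambda(n))\notin\overline{\mathcal{A}}^{W_1}$ as well. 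I do not anticipate a real obstacle; the only point requiring a little care is the application of Lemma \ref{product} to the density-zero indicator $1_{\mathbb P}$, which is what turns membership in $\mathcal A$ into the vanishing of the prime averages of $|a_n|$.
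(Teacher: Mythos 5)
Your proof is correct, and it takes a genuinely different route from the paper's. The paper argues by contradiction: assuming $\|\ba-\Lambda'\|_{W_1}=d<\tfrac12$, it introduces the set $M=\{p\in\P:\ |a_p|\ge \tfrac12\log p\}$ and the auxiliary sequence $b_n=|a_n|1_M(n)$, shows $\|(b_n)-\Lambda'\|_{W_1}\le 2d<1$, and then verifies directly that for every admissible $\phi$ and every $C>0$ one has $\liminf_N \frac1N\sum_{k\le N} b_k\phi(b_k)\ge (1-2d)\phi(C)$, whence $(b_n)\notin\mathcal A$ and, by domination, $\ba\notin\mathcal A$; the threshold $\tfrac12\log p$ is exactly where the constant $\tfrac12$ comes from. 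You instead reuse Lemma \ref{product} with $b_k=1_{\P}(k)$ (density zero of the primes; Chebyshev's bound already suffices, the prime number theorem is not needed for this step) to conclude that every $\ba$ satisfying \eqref{Wphi} has $\frac1N\sum_{p\le N}|a_p|\to 0$, i.e.\ carries asymptotically no $|a|$-mass on the primes, and then the elementary estimate
$$
\frac1N\sum_{k=1}^N|a_k-\Lambda'(k)|\ \ge\ \frac1N\sum_{k=1}^N\Lambda'(k)-\frac1N\sum_{p\le N}|a_p|
$$
together with \eqref{eq:Lambda'-mean} gives $\|\ba-\Lambda'\|_{W_1}\ge 1$. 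This is shorter, avoids the auxiliary truncation set, and yields the sharp constant $1$ rather than $\tfrac12$ (sharp because $\ba=0\in\mathcal A$ has $\|0-\Lambda'\|_{W_1}=1$); conceptually both arguments rest on the same point, namely that \eqref{Wphi} forbids concentration of $|a|$-mass on a density-zero set, but your version lets Lemma \ref{product} do that work instead of reproving it. Your treatment of $(\Lambda(n))$ via $\|\Lambda-\Lambda'\|_{W_1}=0$ from \eqref{eq:Lambda-diff} and the triangle inequality for the semi-norm is also correct and makes explicit what the paper leaves to the reader in its ``in particular''.
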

\begin{proof}
Let $(a_n)$ satisfy  
\begin{equation*}
d:=\|(a_n)-\Lambda'\|_{W_1}<\frac{1}{2}.
\end{equation*}
We have to show that $(a_n)\notin \mathcal{A}$. Define $(b_n)$ by 
$$
b_n:=|a_n| 1_M(n),
$$
where
$$
M:=\left\{p\in\P:\ |a_p|\geq \frac{\log p}2\right\}.
$$
Observe
$$
|b_n-\Lambda'(n)|=
\begin{cases}
||a_n|-\Lambda'(n)|\leq |a_n-\Lambda'(n)|, & n\in M,\\
\log n\leq 2 |a_n-\Lambda'(n)|, & n\in\P\setminus M,\\
0, & n\notin\P.
\end{cases}
$$
Therefore $(b_n)$ satisfies
\begin{equation}\label{eq:2d}
\|(b_n)-\Lambda'\|_{W_1}\leq 2d <1.
\end{equation}
Moreover, since $(b_n)$ is dominated by $(|a_n|)$, it suffices to show that $(b_n)\notin \mathcal{A}$.

Let $\phi:\R_+\to\R_+$ satisfy $\phi\nearrow \infty$ and fix $C>0$. 
Observe that $b_p\geq C$ for every $p\in M$ with $p\geq e^{2C}$ and therefore for $N>e^{2C}$ we have
$$
\aveN b_k \phi(b_k) \geq \frac1N \sum_{n\in M\cap[e^{2C}, N]} b_n \phi(b_n) \geq 
\frac{\phi(C)}N \sum_{n\in M\cap[e^{2C}, N]} b_n =
\frac{\phi(C)}N \sum_{n\in[e^{2C}, N]} b_n.
$$
This together with (\ref{eq:Lambda'-mean}) and (\ref{eq:2d}) implies
\begin{eqnarray*}
\liminf_{N\to\infty}\aveN b_n \phi(b_n) &\geq& \phi(C) \left(\limaveN \Lambda'(n) - \limsup_{N\to\infty}\aveN|\Lambda'(n)-b_n|\right)\\
&\geq& (1-2d)\phi(C).
\end{eqnarray*}
Since $C>0$ was arbitrary, we obtain $\limaveN b_n \phi(b_n)=\infty$ and hence $(b_n)\notin\mathcal{A}$.
\end{proof}
%
%
%

\section{Averages along the primes for operators on $L^r$, $1<r<\infty$.}

We now want to extend the convergence  of the averages along the primes, as in Theorem 
\ref{prime-average}(ii,) to positive contractions of $L^r$, $1<r<\infty$, 
and to invertible operators $T$ on $L^r$ with $T$ and $T^{-1}$ power-bounded.
\smallskip

{\bf Definition.} A linear operator on (the real or complex) $L^r(\Omega,\Sigma,\mu)$ of a 
$\sigma$-finite measure space is called a {\it Lamperti operator}  (or {\it disjointness preserving})
if it preserves disjointness of supports, i.e. $Tf$ and $Tg$ have disjoint supports if 
$f$ and $g$ have disjoint supports.
\smallskip

{\bf Definition.} $T$ on $L^r$ is called a  {\it quasi-isometry} if there exist constants
$c_1$ and $c_2$, and an increasing sequence of positive integers  $(M_n)_{n\ge 1}$ such that
for every $f \in L^r$ we have
$$
c_1\|f\|^r \le \frac1{M_n} \sum_{k=1}^{M_n} \|T^k f\|^r \le  c_2\|f\|^r, \qquad \forall n \ge 1.
$$

If $T$ is invertible with both $T$ and $T^{-1}$ power-bounded, then $\|f\| \le K\|T^k f\|$,
so $T$ is a quasi-isometry. If $T$ is similar to a quasi-isometry, it is a quasi-isometry.

\begin{prop} \label{kan}
Let $(\Omega,\Sigma,\mu)$ be a $\sigma$-finite measure space and fix $1<r<\infty$. Let $T$ be a 
Lamperti quasi-isometry on $L^r(\mu)$. Then there exists $C>0$ such that for every $f \in L^r(\mu)$
\begin{equation} \label{max}
\big\| \sup_n  |\frac1n\sum_{k=1}^n T^k f|\, \big\|_r \le C\|f\|_r, 
\end{equation}
and $\frac1n\sum_{k=1}^n T^kf$ converges a.e. and in norm.
\end{prop}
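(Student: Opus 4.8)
The plan is to reduce the problem to a pointwise ergodic theorem for Dunford--Schwartz operators by means of the structure theory of Lamperti operators on $L^r$. First I would invoke the representation of Lamperti operators: a linear operator $T$ on $L^r(\Omega,\Sigma,\mu)$ preserving disjointness of supports has the form $Tf = h \cdot (f\circ\varphi)$ for a measurable map (more precisely, a measurable point transformation or, in the $\sigma$-finite setting, a map between the underlying measure algebras) $\varphi$ and a weight function $h$; this is the classical description due to Lamperti (and, in the form needed here, to the work on isometries of $L^r$). Composing, $T^k f = h_k\cdot(f\circ\varphi^k)$ for suitable weights $h_k$. The quasi-isometry hypothesis
$$
c_1\|f\|_r^r \le \frac1{M_n}\sum_{k=1}^{M_n}\|T^kf\|_r^r \le c_2\|f\|_r^r
$$
is exactly what is needed to control the weights $h_k$ in an averaged $L^r$ sense and to produce an associated \emph{conservative} (non-singular) point transformation together with a Radon--Nikodym cocycle that is bounded on average. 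The key step is then to build from $T$ a positive operator (or a substochastic operator) $S$ on $L^1(\mu')$ for an equivalent finite or $\sigma$-finite measure $\mu'$, contracting both $\|\cdot\|_1$ and $\|\cdot\|_\infty$, i.e. a Dunford--Schwartz operator, such that the Cesàro averages $\frac1n\sum_{k=1}^n T^kf$ are dominated (pointwise, up to a fixed constant depending on $c_1,c_2,r$) by the corresponding averages of $S$ applied to an appropriate function.

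Granting that reduction, the maximal inequality \eqref{max} and the a.e.\ convergence follow from the Dunford--Schwartz pointwise ergodic theorem and the Hopf--Dunford--Schwartz maximal inequality, which give precisely $\big\|\sup_n|\frac1n\sum_{k=1}^n S^k g|\big\|_r \le C_r\|g\|_r$ for $g\in L^r$, $1<r<\infty$, together with a.e.\ convergence of $\frac1n\sum_{k=1}^n S^k g$; transporting this back through the domination yields \eqref{max} for $T$. Norm convergence is then obtained separately: the maximal inequality shows the averages are bounded in $L^r$, the operator $T$ is power-bounded on the reflexive space $L^r$ (this is immediate from the upper quasi-isometry bound with $n=1$ once one knows $M_1\ge 1$, or directly from boundedness of the weights), hence $T$ is mean ergodic by the mean ergodic theorem quoted in the Introduction, so $\frac1n\sum_{k=1}^n T^kf$ converges in norm; a.e.\ convergence along the full sequence then identifies the a.e.\ limit with the norm limit.

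I expect the main obstacle to be the reduction step itself: extracting from the merely \emph{averaged} two-sided bound in the definition of quasi-isometry a genuine Dunford--Schwartz operator dominating the ergodic averages. One has to be careful that the underlying transformation $\varphi$ need not be measure preserving or even invertible, only non-singular, and that the cocycle of weights is controlled only in a Cesàro-averaged $L^r$ sense rather than uniformly; passing to the linear modulus of $T$ and renormalizing the measure so that this modulus becomes a true $L^1$--$L^\infty$ contraction requires the Chacon--Krengel / Ionescu-Tulcea type arguments for positive operators. Once that machinery is in place the rest is a routine application of the Dunford--Schwartz theory, and the final assertion — a.e.\ and norm convergence of $\frac1n\sum_{k=1}^n T^kf$ for all $f\in L^r$ — follows as described.
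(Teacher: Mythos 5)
There is a genuine gap, and it sits exactly where you flagged it: the reduction of a Lamperti quasi-isometry to a Dunford--Schwartz operator is not carried out, and it is not a technicality that the known machinery fills. The quasi-isometry hypothesis controls the weight cocycle $h_k$ only through the Ces\`aro averages of $\|T^kf\|_r^r$; it gives no pointwise (or even uniform-in-$k$) comparison between $\frac1n\sum_{k=1}^n T^kf$ and the averages of any single positive operator that contracts both $L^1$ and $L^\infty$ for an equivalent measure. The underlying transformation is merely non-singular, and the discrepancy between the Radon--Nikodym cocycle and the $r$-th power of the weights is precisely the obstruction that the Chacon--Krengel/Ionescu-Tulcea arguments you invoke do not remove; this is why the literature treats quasi-isometries by Calder\'on-type transference rather than by domination. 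The paper's proof is exactly that: the maximal inequality \eqref{max} is obtained by transferring the (Hardy--Littlewood type) maximal inequality for the shift on $\mathbb Z$ to $T$ via the transference theorem of Jones--Olsen--Wierdl \cite[Theorem 2.1]{JOW}, which is where the Lamperti structure $T^kf=h_k\cdot\Phi^k f$ and the two-sided averaged bounds are used; the a.e.\ convergence is then obtained from \cite{JOW}, whose key ingredient (their inequality (2.7), an oscillation-type estimate for the shift) is due to Bourgain \cite{B2}. Note that a.e.\ convergence cannot be dispatched by ``maximal inequality plus a dense class'': no convenient dense class of good vectors is available here, which is why the transferred oscillation inequality is needed.

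A second, independent error is your claim that power-boundedness of $T$ is ``immediate from the upper quasi-isometry bound with $n=1$.'' That bound, $\frac1{M_1}\sum_{k=1}^{M_1}\|T^kf\|_r^r\le c_2\|f\|_r^r$, only controls $\|T^k\|$ for $k\le M_1$ (and only by $(c_2M_1)^{1/r}$); a quasi-isometry need not be power-bounded, so the appeal to the mean ergodic theorem on the reflexive space $L^r$ to get norm convergence is unjustified. The paper avoids this entirely: once one has \eqref{max} and a.e.\ convergence of the averages, norm convergence follows by dominated convergence, since $|\frac1n\sum_{k=1}^nT^kf|^r$ is dominated by the $r$-th power of the maximal function, which lies in $L^1$. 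If you want to salvage your outline, replace the Dunford--Schwartz domination by the transference principle of \cite{JOW} (together with Bourgain's estimate for the a.e.\ statement) and derive norm convergence from the a.e.\ convergence and \eqref{max} rather than from mean ergodicity.
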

\begin{proof} The maximal inequality is obtained by transfering the maximal inequality for the shift
on $\mathbb Z$, using \cite[Theorem 2.1]{JOW}. The a.e. convergence follows from \cite{JOW} -- the
needed inequality (2.7) there was proved by Bourgain \cite{B2}. Together with (\ref{max}), the a.e. 
convergence yields the norm convergence.
\end{proof}

\begin{theo} \label{Lr}
Let $(\Omega,\Sigma,\mu)$ be a $\sigma$-finite measure space and fix $1<r<\infty$. Let $T$
be a linear operator on $L^r(\mu)$ satisfying one of the following conditions:

(i) $T$ is a positive contraction or a Lamperti contraction.

(ii) $T$ is an isometry.

(iii) $T$ is invertible, with both $T$ and $T^{-1}$ positive and power-bounded.

(iv)  $T$ is a Lamperti quasi-isometry.

Then for every $f \in L^r$ we have convergence in norm of $\frac1n \sum_{j=1}^n T^{p_j}f$ and of 
\newline
$\frac1N \sum_{j=1}^{\pi(N)} \log p_j T^{p_j}f$.
\end{theo}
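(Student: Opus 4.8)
The plan is to reduce each of the four cases to the situation already handled in Theorem~\ref{prime-average}(ii) (Hilbert space contractions) or to the maximal-inequality machinery of Proposition~\ref{kan}, by passing to a suitable spectral/transference model. By Proposition~\ref{limits} it suffices to prove norm convergence of $\frac1n\sum_{j=1}^n T^{p_j}f$; the convergence of $\frac1N\sum_{j=1}^{\pi(N)}\log p_j\,T^{p_j}f$ then follows automatically, with the same limit.

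First, case~(i). For a positive contraction $T$ on $L^r$, the Akcoglu dilation theorem gives a measure space $(\tilde\Omega,\tilde\mu)$, an invertible measure-preserving transformation (equivalently an invertible positive isometry) $\tilde T$ of $L^r(\tilde\mu)$, and positive contractions $D$ (a conditional expectation onto an invariant subspace) and $J$ (an isometric embedding) with $T^k f = D\tilde T^k J f$ for all $k$. Then $\frac1n\sum_{j=1}^n T^{p_j}f = D\big(\frac1n\sum_{j=1}^n \tilde T^{p_j} Jf\big)$, so it suffices to handle invertible positive isometries of $L^r$. For a Lamperti contraction the same reduction works once we know it dilates to an invertible Lamperti isometry (this is standard; Lamperti contractions are, up to the above structure, built from a non-singular point transformation together with a weight, and admit an Akcoglu-type dilation). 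Thus cases (i) and (ii) both reduce to invertible isometries of $L^r$, i.e. to case (ii); and case (iii) reduces to (iv) since an invertible power-bounded $T$ with power-bounded inverse is a quasi-isometry, and if it is moreover positive one checks it is automatically Lamperti (a positive invertible operator on $L^r$ with positive inverse is a weighted composition operator, hence Lamperti). So everything reduces to the two genuinely different cases: \emph{invertible isometries} of $L^r$ (case (ii)) and \emph{Lamperti quasi-isometries} (case (iv)).

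For case~(iv), hence also (iii): by Proposition~\ref{kan} we already have the maximal inequality $\big\|\sup_n|\frac1n\sum_{k=1}^n T^k f|\big\|_r\le C\|f\|_r$ and a.e.\ convergence of the ordinary Cesàro averages, obtained by transferring the corresponding statements for the shift on $\Z$ via \cite[Theorem~2.1]{JOW}. The same transference principle of \cite{JOW} applies verbatim with the averaging weights $1_{\{p_1,\dots,p_n\}}$ replacing $1_{\{1,\dots,n\}}$: the Lamperti quasi-isometry hypothesis is exactly what is needed to transfer a maximal inequality and a.e.\ convergence result for a convolution-type operator on $\Z$ to $L^r(\mu)$. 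The input maximal inequality and pointwise convergence along primes on $\Z$ (equivalently, on $(\Z,\tau)$ for the shift) for $L^r$, $r>1$, are precisely Bourgain's theorem \cite{B2},\cite{B} and Wierdl \cite{W} — the very results quoted in Remark~1 after Theorem~\ref{realH} — together with the inequality (2.7) of \cite{JOW} proved by Bourgain. So for case~(iv) we get the maximal inequality $\big\|\sup_n|\frac1n\sum_{j=1}^n T^{p_j}f|\big\|_r\le C\|f\|_r$ and a.e.\ convergence; since the averages are dominated in $L^r$ by an integrable maximal function and converge a.e., dominated convergence upgrades this to norm convergence. For case~(ii), an invertible isometry $U$ of $L^r$ is a quasi-isometry with constants $c_1=c_2=1$, and by the Banach–Lamperti description of isometries of $L^r$ ($r\ne 2$) every such $U$ is automatically a Lamperti operator, so case (ii) is subsumed in (iv); for $r=2$ one may instead invoke Theorem~\ref{prime-average}(ii) directly (a surjective isometry of a Hilbert space is unitary).

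The main obstacle I expect is \emph{not} the spectral/convergence analysis along the primes — that is entirely imported from Bourgain–Wierdl and from Theorem~\ref{prime-average} — but the structural reductions: verifying that Lamperti contractions (resp.\ positive contractions) admit Akcoglu-type dilations to invertible Lamperti isometries in a form that commutes with composition on the left, so that $\frac1n\sum_{j=1}^n T^{p_j}f$ transfers cleanly, and checking that the hypotheses ``isometry'', ``invertible with positive power-bounded inverse'' etc.\ each force the Lamperti property so that Proposition~\ref{kan} and the JOW transference apply. Once these identifications are in place, the combination of the maximal inequality from Proposition~\ref{kan}, the a.e.\ convergence from transferring Bourgain–Wierdl, and a dominated-convergence argument closes all four cases, and Proposition~\ref{limits} transfers the conclusion to the weighted averages $\frac1N\sum_{j=1}^{\pi(N)}\log p_j\,T^{p_j}f$.
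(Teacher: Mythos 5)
Your proposal follows essentially the same route as the paper: reduce (i)--(iii) to case (iv) via the Akcoglu and Kan dilations and Lamperti's description of $L^r$-isometries (with $r=2$ handled directly by Theorem \ref{prime-average}(ii) resp.\ Theorem \ref{realH}), obtain a.e.\ convergence and a maximal inequality along the primes for Lamperti quasi-isometries by the Jones--Olsen--Wierdl transference of the Bourgain--Wierdl results on $\Z$, upgrade to norm convergence by dominated convergence, and finish with Proposition \ref{limits}. The only points you gloss over are that Wierdl's maximal inequality in \cite{W} is stated for the $\Lambda'$-modulated averages, so its equivalence with the maximal inequality along the primes (needed to justify the step in \cite{JOW}) requires the short argument given in Appendix A, and that case (ii) does not assume surjectivity --- harmless, since non-surjective isometries are still Lamperti for $r\neq2$ and are contractions for $r=2$.
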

\begin{proof}
We first show that it is enough to prove  the theorem when $T$ satisfies (iv). 

If $T$ satisfies (iii), then $T$ is a quasi-isometry as noted above, and the positivity yields 
that $T$ is Lamperti, by Kan \cite[Proposition 3.1]{Kan}, so (iv) is satisfied.

If $T$ satisfies (ii) and $r \ne 2$, then the isometry $T$ is a Lamperti operator 
\cite[Theorem 3.1]{La} ($T$ need not be positive).  When $r=2$ the theorem  follows from Theorem 
\ref{prime-average} or Theorem \ref{realH}.

When $T$ is a positive contraction, we use Akcoglu's dilation of $T$ to a positive isometry 
(see \cite{AS}), so the theorem in that case follows from its conclusion for isometries.
When $T$ is a Lamperti contraction, it can be dilated to a (Lamperti) isometry by Kan's dilation
\cite[Theorem 4.4]{Kan}, so the theorem follows from its conclusion for isometries.
\medskip

Jones, Olsen and Wierdl \cite{JOW} proved that if $T$ is a Lamperti quasi-isometry on $L^r$, then 
$\frac1n \sum_{j=1}^n T^{p_j}f$ converges a.e. for every $f \in L^r$. However, in their proof they 
refer to \cite{W} for the maximal inequality needed in their formula (2.8); but the maximal 
inequality proved in \cite{W} is for the modulated averages, modulated by $(\Lambda'(k))$. 
(Their formula (2.7) is proved in \cite{B2} for the averages along the primes). We are grateful 
to M\'at\'e Wierdl for showing us the equivalence of these maximal inequalities (see details in the 
Appendix).  With the correct maximal inequality,  when transferred to $L^r$ using 
\cite[Theorem 2.1]{JOW}, we have that $\sup_n |\frac1n\sum_{j=1}^n T^{p_j}f|$ is in $L^r$, so the 
a.e. convergence proved in \cite{JOW} implies norm convergence, by Lebesgue's bounded convergence 
theorem.  This proves the theorem for the averages along the primes. The other convergence follows 
from Proposition \ref{limits}.
\end{proof}

\noindent
{\bf Remarks.} 1. Theorem \ref{Lr} applies also to operators on $L^r$ which are similar to 
an operator satisfying one of the assumptions (i)-(iv).

2. Kan \cite[Example 3.1]{Kan} has an example (on $\ell^r(\mathbb Z)$) of a positive invertible 
$T$ with $T$ and $T^{-1}$ power-bounded, such that $T$ and $T^{-1}$ are {\it not} Lamperti 
($T^{-1}$ is not positive).
Assani \cite{As} constructed for each $1<r<\infty$ an invertible doubly power-bounded 
operator $T$ on $L^r[0,1]$ which does not satisfy the pointwise ergodic theorem; hence,
by \cite[Theorem 5.3]{Kan}, $T$ is not Lamperti (and none of its powers is).

3. Gillespie \cite[p. 251]{Gi} constructed, for any locally compact $\sigma$-compact Abelian group $G$
and $r \ne 2$, an invertible $T$ on the complex  $L^r(G)$ with $T$ and $T^{-1}$ power-bounded, which is 
{\it not similar} to an isometry (unlike the result for $r=2$ \cite{S-N}), though $T^3$ is an isometry.
We will show below that his construction yields $T$ which in fact is not similar to a Lamperti operator.
\medskip

We recall the structure of Lamperti operators \cite[Theorem 4.1 and Proposition 4.1]{Kan}:
{\it Let $T$ be a Lamperti operator on $L^r(\Omega, \Sigma, \mu)$. Then there exist a 
non-singular $\sigma$-endomorphism $\Phi_0$ of $\Sigma$, which induces a positive operator 
$\Phi$ on measurable functions, and a finite function $h$, such that 
$Tf(\omega) =h(\omega)\cdot \Phi f(\omega)$.} This representation yields
\begin{equation} \label{powers}
T^nf = \big[h\cdot \Phi h\cdots \Phi^{n-1}h\big] \cdot \Phi^n f, \qquad f \in L^r(\Omega,\mu), \quad
n \in\mathbb N.
\end{equation}

For the analysis of the spectrum of invertible Lamperti operators we need the following extension of 
\cite[Theorem 3.1]{Gi}.

\begin{theo} \label{gill1}
Let $1<r<\infty$, $r \ne 2$, and let $T$ be an invertible Lamperti operator on the complex $L^r$ 
with representation $Tf=h\cdot \Phi f$, satisfying $\sup_{n\in\mathbb Z} \|T^n\|<\infty$. 
If $\Phi \ne I$, then there exists a set $\sigma \in \Sigma$ with 
$0<\mu(\sigma) < \infty$ such that $\sigma$ and $\Phi_0(\sigma)$ are disjoint. Hence the set
$$
\mathcal M:=\{1\le m \in \mathbb N:  \exists \sigma_m \in \Sigma, \, 0<\mu(\sigma_m) < \infty, \quad
\sigma_m,\Phi_0(\sigma_m), \dots , \Phi_0^m(\sigma_m) \ \text{are disjoint} \}
$$
is not empty.

(a) If $\mathcal M = \mathbb N$, then $\sigma(T) = \mathbb T$.

(b) If $\mathcal M \ne \mathbb N$, then $\mathcal M$ is finite, with $m_0 \ge 1$ points, 
and $\sigma(T)$ contains $m_0+1$ distinct points equally spaced on $\mathbb T$.
\end{theo}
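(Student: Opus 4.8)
The plan is to first produce the wandering‑type set $\sigma$, and then manufacture approximate eigenvectors out of the disjoint‑support skeleton recorded by $\mathcal M$.

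\emph{The set $\sigma$ and non‑emptiness of $\mathcal M$.} Since $T$ is an invertible Lamperti operator, the induced $\Phi_0$ is an automorphism of the measure algebra of $(\Omega,\Sigma,\mu)$. If $\Phi\neq I$ then $\Phi_0\neq\mathrm{id}$, so some $A\in\Sigma$ has $\mu(A\setminus\Phi_0(A))>0$ (if instead $\mu(\Phi_0(A)\setminus A)>0$, replace $A$ by $\Phi_0(A)$ and $\Phi_0$ by $\Phi_0^{-1}$, using $\Phi_0^{-1}\Phi_0(A)=A$). By $\sigma$‑finiteness pick $\sigma\subseteq A\setminus\Phi_0(A)$ with $0<\mu(\sigma)<\infty$; then $\sigma\subseteq A$ and $\sigma\cap\Phi_0(A)=\emptyset$, while $\Phi_0(\sigma)\subseteq\Phi_0(A)$, so $\sigma\cap\Phi_0(\sigma)=\emptyset$. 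In particular $1\in\mathcal M$, so $\mathcal M\ne\emptyset$; moreover a witness for $m$ is also a witness for $m-1$, so $\mathcal M$ is an initial segment of $\mathbb N$, whence either $\mathcal M=\mathbb N$ or $\mathcal M=\{1,\dots,m_0\}$.

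\emph{The basic estimate, and case (a).} Suppose $f\in L^r$ is supported on a set $\sigma$ with $\sigma,\Phi_0(\sigma),\dots,\Phi_0^m(\sigma)$ pairwise disjoint. By \eqref{powers} the functions $f,Tf,\dots,T^mf$ have pairwise disjoint supports, so with $M:=\sup_{n\in\mathbb Z}\|T^n\|$, $\lambda\in\mathbb T$, and $g:=\sum_{k=0}^m\bar\lambda^kT^kf$, disjointness gives $\|g\|_r^r=\sum_{k=0}^m\|T^kf\|_r^r\ge(m+1)M^{-r}\|f\|_r^r$ (because $\|T^kf\|\ge\|T^{-k}\|^{-1}\|f\|\ge M^{-1}\|f\|$), while a telescoping computation yields $(\lambda I-T)g=\lambda f-\bar\lambda^mT^{m+1}f$. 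If $\mathcal M=\mathbb N$, pick for each $m$ such an $f=f_m$ with $\|f_m\|=1$; then $\|(\lambda I-T)g_m\|\le\|f_m\|+\|T^{m+1}f_m\|\le1+M$ while $\|g_m\|\ge(m+1)^{1/r}M^{-1}$, so $\lambda I-T$ is not bounded below and $\lambda$ lies in the approximate point spectrum of $T$. As $\lambda\in\mathbb T$ is arbitrary, $\mathbb T\subseteq\sigma(T)$; and $\sigma(T)\subseteq\mathbb T$ since $T$ and $T^{-1}$ have spectral radius $\le1$. Hence $\sigma(T)=\mathbb T$, which is (a).

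\emph{Case (b).} Here a structural analysis of $\Phi_0$ is needed, and this is the main obstacle. The Rokhlin lemma for the non‑singular automorphism $\Phi_0$ shows that an aperiodic part of positive measure would produce towers of every height, i.e.\ $\mathcal M=\mathbb N$; likewise a positive‑measure set of points of period $>m_0+1$ would give $\{1,\dots,m_0+1\}\subseteq\mathcal M$. Hence, when $\mathcal M=\{1,\dots,m_0\}$, $\Phi_0$ is periodic with periods $\le m_0+1$ a.e., and (since $m_0\in\mathcal M$) the $\Phi_0$‑invariant set $\Omega_{m_0+1}$ of points of exact period $m_0+1$ has positive measure. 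As $\Omega_{m_0+1}$ and its complement are both $\Phi_0$‑invariant, $L^r(\mu)=L^r(\Omega_{m_0+1})\oplus L^r(\Omega\setminus\Omega_{m_0+1})$ splits $T$ as a direct sum, so $\sigma(T)\supseteq\sigma(T_1)$, where $T_1:=T|_{L^r(\Omega_{m_0+1})}$ (which is again doubly power‑bounded with the same bound $M$). On $\Omega_{m_0+1}$ one has $\Phi^{m_0+1}=I$, so by \eqref{powers} $T_1^{m_0+1}$ is multiplication by $w:=h\cdot\Phi h\cdots\Phi^{m_0}h$, which is $\Phi_0$‑invariant; and double power‑boundedness of $T_1$ forces $\|w\|_\infty\le1$ and $\|w^{-1}\|_\infty\le1$, i.e.\ $|w|=1$ a.e. Fix $z_0$ in the non‑empty (unimodular) essential range of $w$ and let $\lambda$ be any of the $m_0+1$ roots of $\lambda^{m_0+1}=z_0$. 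For $\epsilon>0$ take a cross‑section piece $A\subseteq\Omega_{m_0+1}$ with $0<\mu(A)<\infty$ on which $|w-z_0|<\epsilon$ (so $A,\Phi_0(A),\dots,\Phi_0^{m_0}(A)$ are disjoint, every orbit in $\Omega_{m_0+1}$ having exactly $m_0+1$ points), and $f$ supported on $A$ with $\|f\|=1$; then $T_1^{m_0+1}f=wf$ and, using $\bar\lambda^{m_0}z_0=\bar\lambda^{m_0}\lambda^{m_0+1}=\lambda$, the basic estimate gives $(\lambda I-T_1)g=\lambda f-\bar\lambda^{m_0}wf=-\bar\lambda^{m_0}(w-z_0)f$, so $\|(\lambda I-T_1)g\|\le\epsilon$ while $\|g\|\ge(m_0+1)^{1/r}M^{-1}$. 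Letting $\epsilon\to0$ shows $\lambda\in\sigma(T_1)\subseteq\sigma(T)$. The $m_0+1$ roots of $z_0$ are distinct and equally spaced on $\mathbb T$, which is the assertion. (Alternatively, once $T_1^{m_0+1}$ is identified as multiplication by a unimodular function, (b) follows from the spectral mapping theorem applied to $z\mapsto z^{m_0+1}$ together with the invariance of $\sigma(T_1)$ under multiplication by $(m_0+1)$‑th roots of unity, which reflects the cyclic weighted‑shift form of $T_1$ on $\Omega_{m_0+1}$.) Throughout, the genuinely delicate point is the passage from finiteness of $\mathcal M$ to periodicity of $\Phi_0$ with a positive‑measure set of exact period $m_0+1$, for which the Rokhlin/Kakutani tower construction for non‑singular automorphisms and the cross‑section structure of periodic ones are the tools; the remaining estimates are routine once the disjoint‑support skeleton is in hand.
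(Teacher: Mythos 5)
Your treatment of the non-emptiness of $\mathcal M$ and of part (a) is essentially the paper's: the paper takes the wandering set from Gillespie's Theorem 3.1 and, exactly as you do, uses $f_m=\sum_{n=0}^{m}\lambda^{-n}T^n 1_{\sigma_m}$ as approximate eigenvectors for every $\lambda\in\mathbb T$, with the lower bound coming from disjoint supports via (\ref{powers}) and double power-boundedness. For part (b) you take a genuinely different route. The paper quotes Gillespie (pp.~249--250) for the existence of a single periodic tower, i.e.\ a set $\sigma$ of finite positive measure with $\sigma,\Phi_0(\sigma),\dots,\Phi_0^{m_0}(\sigma)$ disjoint and $\Phi_0^{m_0+1}(\sigma)=\sigma$; it restricts $T$ to $L^r$ of the tower's union $\Omega_0$, checks by Kan's formula for $T^{-1}$ that $T_0:=T|_{L^r(\Omega_0)}$ is invertible and doubly power-bounded (so $\emptyset\ne\sigma(T_0)\subset\mathbb T$), and gets the $m_0+1$ equally spaced points from the similarity $S^{-1}T_0S=\lambda T_0$, $S$ being multiplication by $\sum_{k=0}^{m_0}\lambda^{-k}1_{\Phi_0^k(\sigma)}$, which makes $\sigma(T_0)$ invariant under rotation by the primitive root $\lambda$; all points of $\sigma(T_0)$ are approximate eigenvalues, hence lie in $\sigma(T)$. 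You instead pass to the whole exact-period-$(m_0+1)$ part, identify $T_1^{m_0+1}$ as multiplication by a unimodular, $\Phi_0$-invariant $w$, and manufacture approximate eigenvectors at every $(m_0+1)$-th root of a point of the essential range of $w$; your estimates there are sound, and this variant is even a bit more informative, since it identifies which equally spaced points occur.

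The one caution is exactly the step you flag as delicate: passing from $\mathcal M=\{1,\dots,m_0\}$ to the periodic structure (a.e.\ period at most $m_0+1$, a positive-measure exact-period part, and a cross-section yielding, inside the invariant set $\{|w-z_0|<\epsilon\}$, a set whose first $m_0+1$ images are disjoint). You argue with the Rokhlin lemma and with orbits of points, but in this theorem $\Phi_0$ is only a non-singular $\sigma$-endomorphism of $\Sigma$ on an arbitrary $\sigma$-finite measure space, not necessarily induced by a point transformation of a standard space, so the orbit and cross-section language must be replaced by a measure-algebra argument (this is precisely what Gillespie's cited pages supply, constructing the periodic tower by an exhaustion in the measure algebra); note also that the paper's similarity trick needs only that one tower, not the full decomposition you use. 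So your proposal is correct in substance and different in method for (b), but to be complete you should either prove the structural step at the level of the measure algebra or quote Gillespie for it, as the paper does.
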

\begin{proof}
The proof that $\mathcal M$ is not empty is the same as in \cite[Theorem 3.1]{Gi}. 
\smallskip

(a) We adapt the proof of \cite{Gi}. 
If $\mathcal M=\mathbb N$, then for every $m \in \mathbb N$ there is $\sigma_m \in \Sigma$ with  
$1_{\sigma_m}\in L^r$, such that $\big(\Phi_0^k(\sigma_m)\,)_{0\le k\le m} \big)$ are disjoint.
Hence, by (\ref{powers}), $(T^k1_{\sigma_m})_{0\le k \le m}$ have disjoint supports.

Denote $C:=\sup_{n\in \mathbb Z} \|T^n\|$. Then for $f \in L^r$ we have 
\begin{equation} \label{below}
\frac1C \|f\| \le \|T^kf\| \le C\|f\|.
\end{equation}

Fix $\lambda \in \mathbb T$, and put $f_m: = \sum_{n=0}^m \lambda^{-n}T^n 1_{\sigma_m}$.
Disjointness of supports yields
$$
\|f_m\|^r=\sum_{n=0}^m \|T^n 1_{\sigma_m}\|^r \ge 
\frac1{C^r}(m+1)\|1_{\sigma_m}\|^r=\frac{m+1}{C^r} \mu(\sigma_m).
$$
By definition $(\lambda I -T)f_m = \lambda1_{\sigma_m} - \lambda^{-m} T^{m+1}1_{\sigma_m}$, so
$$
\|(\lambda I -T)f_m\| 
\le \|1_{\sigma_m}\| + \|T^{m+1}1_{\sigma_m}\| \le (C+1)\|1_{\sigma_m}\| = (C+1)\mu(\sigma_m)^{1/r}.
$$
Thus 
$$
\frac{\|(\lambda I -T)f_m\| }{\|f_m\|} \le \frac{C(C+1)}{(m+1)^{1/r} } \underset{m\to\infty}\to 0.
$$
Hence $\lambda I-T$ cannot have a bounded inverse, and $\lambda \in \sigma(T)$.
\smallskip

(b) Obviously $n \in \mathcal M$ implies that $m <n$ is in $\mathcal M$, so if 
$\mathcal M \ne \mathbb N$ then $\mathcal M$ is finite, and $\mathcal M=\{1,2,\dots, m_0\}$.
It is proved in \cite[pp. 249-250]{Gi} that there exists $\sigma \in \Sigma$ with 
$0<\mu(\sigma) < \infty$ such that $\sigma,\Phi_0(\sigma), \dots, \Phi_0^{m_0}(\sigma)$ 
are disjoint and $\Phi_0^{m_0+1}(\sigma)=\sigma$.

Let $\Omega_0= \bigcup_{k=0}^{m_0} \Phi_0^k(\sigma)$. Since  for $0 \le k \le m_0$ we have
$$
T1_{\Phi_0^k(\sigma)}= h \cdot\Phi 1_{\Phi_0^k(\sigma)} = h\cdot 1_{\Phi_0^{k+1}(\sigma)}
\quad (\text{addition } \mod m_0),
$$
the subspace $X:=L^r(\Omega_0,\mu)$ is $T$-invariant, and we put $T_0:=T_{|X}$. 
Since $T$ is invertible, it follows form \cite[Proposition 4.1]{Kan} that $\Phi_0(\Omega)=\Omega$,
$\Phi_0$ and its induced operator $\Phi$ are invertible, and $T^{-1}$ is given by
$$
T^{-1}f(\omega) =  \frac1{h(\omega)} \Phi^{-1}f(\omega).
$$
By invertibility, $(\Phi_0^{-1})^k(\sigma) = \Phi_0^{m_0+1-k}(\sigma)$, so we have that $X$
is invariant also under $T^{-1}$, and $T_0$ is invertible, with $T_0^{-1} =(T^{-1})_{|X}$. Hence
$T_0$ and $T_0^{-1}$ are power-bounded, so $\sigma(T_0) \subset \mathbb T$.

Let $\lambda = \e^{2\pi i/(m_0+1)}$ be a primitive  $(m_0+1)$th root of unity, and define 
$S: X \longrightarrow X$, as in \cite{Gi}, by 
$$
Sf:=\sum_{k=0}^{m_0} \lambda^{-k} 1_{\Phi_0^k(\sigma)} f, \quad f \in X=L^r(\Omega_0).
$$
It is computed in \cite{Gi} (with $U$ there replaced by $T_0$) that $S^{-1}T_0S= \lambda T_0$.
so $\sigma(T_0)=\lambda \sigma(T_0)$. Let $\gamma \in \sigma(T_0)$. Then $|\gamma|=1$, and
the $m_0+1$ different points $\gamma, \lambda \gamma, \dots, \lambda^{m_0}\gamma$ are in $\sigma(T_0)$.
Since $\sigma(T_0) \subset \mathbb T$, all its points are in the approximate point spectrum
of $T_0$ \cite[p. 282]{TL}; hence the $m_0+1$ points 
$\gamma, \lambda \gamma, \dots, \lambda^{m_0}\gamma$ are in $\sigma(T)$.
\end{proof}

\begin{lem} \label{spectra}
Let $T$ and $S$ be similar bounded operators on a Banach space $X$. Then $\sigma(T)=\sigma(S)$
and $\sigma_p(T)=\sigma_p(S)$.
\end{lem}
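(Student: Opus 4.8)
The plan is to unpack the definition of similarity and then push each spectral statement through conjugation. Recall that $T$ and $S$ being similar means there is a bounded invertible operator $V$ on $X$ (with $V^{-1}$ bounded, which is automatic on a Banach space by the open mapping theorem) such that $S=V^{-1}TV$.

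For the equality of full spectra, I would fix a scalar $\lambda$ and write
$$
\lambda I - S = V^{-1}(\lambda I - T)V .
$$
Since $V$ and $V^{-1}$ are bounded and invertible, the right-hand side is invertible in $\mathcal L(X)$ if and only if $\lambda I - T$ is invertible, with inverse $V^{-1}(\lambda I - T)^{-1}V$ when it exists. Hence $\lambda$ lies in the resolvent set of $S$ exactly when it lies in the resolvent set of $T$, which gives $\sigma(S)=\sigma(T)$.

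For the point spectra, suppose $\lambda\in\sigma_p(T)$, say $Tx=\lambda x$ with $x\ne 0$. Put $y:=V^{-1}x$; then $y\ne 0$ because $V^{-1}$ is injective, and
$$
Sy = V^{-1}TVV^{-1}x = V^{-1}Tx = \lambda V^{-1}x = \lambda y,
$$
so $\lambda\in\sigma_p(S)$. This shows $\sigma_p(T)\subseteq\sigma_p(S)$, and interchanging the roles of $T$ and $S$ (similarity is a symmetric relation, via $T=VSV^{-1}$) gives the reverse inclusion. There is no real obstacle here: the argument is entirely formal, the only point worth a word being the boundedness of $V^{-1}$, which is part of the (standard) meaning of "similar" for bounded operators on a Banach space.
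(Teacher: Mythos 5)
Your proof is correct and follows essentially the same route as the paper: a direct conjugation computation showing $\lambda I - S = V^{-1}(\lambda I - T)V$ transfers resolvents, and conjugating eigenvectors transfers point spectra. The only difference is the harmless choice of convention $S=V^{-1}TV$ versus the paper's $T=V^{-1}SV$.
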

\begin{proof} This is a standard result. A direct computation from $T=V^{-1}SV$ yields 
$R(\lambda,T) =V^{-1}R(\lambda,S)V$ when $\lambda \in \rho(S)$, and $S(Vf)= \lambda Vf$ 
when $Tf =\lambda f$. The lemma then follows.
\end{proof}

\begin{prop} \label{gill}
Let $\mu$ be the normalized Lebesgue measure on the unit circle $\mathbb T$,
and fix $1 < r<\infty$, $r \ne 2$.  Then there exists an invertible operator on the complex
$L^r(\mathbb T,\mu)$ such that both $T$ and $T^{-1}$ are power-bounded, but $T$ is not similar to 
a Lamperti operator (in particular, $T$ is not similar to an isometry), nor to a positive operator.
\end{prop}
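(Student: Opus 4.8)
The plan is to argue by contradiction, combining Gillespie's construction with the structure theory of Lamperti operators and with Lemma~\ref{spectra} and Theorem~\ref{gill1}. Recall from \cite[p.~251]{Gi} (with $G=\mathbb{T}$) the invertible operator $T$ on the complex $L^r(\mathbb{T},\mu)$ with $\sup_{n\in\mathbb{Z}}\|T^n\|<\infty$ that is \emph{not} similar to an isometry; concretely $T=P_-+\omega P_0+\omega^2P_+$, where $\omega=\e^{2\pi i/3}$, $P_0$ is the rank-one projection onto the constant functions and $P_\pm$ are the Riesz (co)analytic projections, so that $T^3=I$ and the $\omega$-eigenspace $\ker(T-\omega I)$ is the one-dimensional range of $P_0$. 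Suppose now $T=V^{-1}SV$ for some Lamperti operator $S$ on $L^r(\mathbb{T},\mu)$. Then $S$ is invertible with $S$ and $S^{-1}=S^2$ power-bounded and $S^3=I$; by Lemma~\ref{spectra}, $\sigma(S)=\sigma(T)=\{1,\omega,\omega^2\}$ and $\sigma_p(S)=\sigma_p(T)=\{1,\omega,\omega^2\}$; and $V$ maps $\ker(T-\omega I)$ isomorphically onto $\ker(S-\omega I)$, which is therefore nonzero and one-dimensional.

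I would then exploit the representation $Sf=h\cdot\Phi f$, with $\Phi$ induced by a non-singular $\sigma$-endomorphism $\Phi_0$ and $|h|^{\pm1}\in L^\infty$. From $S^3=I$ and formula (\ref{powers}) one reads off $\Phi_0^3=\mathrm{id}$ $(\mathrm{mod}\ \mu)$ and $h\cdot\Phi h\cdot\Phi^2h=1$ a.e.; (alternatively, $\sigma(S^3)=\{1\}$ by the spectral mapping theorem, so Theorem~\ref{gill1} applied to the Lamperti operator $S^3$ --- whose underlying endomorphism is $\Phi_0^3$ --- forces $\Phi_0^3=\mathrm{id}$, since otherwise $\sigma(S^3)$ would be infinite). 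Hence $\Phi_0$ is periodic with periods $1$ or $3$: $\Omega$ decomposes $(\mathrm{mod}\ \mu)$ into a set $F$ fixed by $\Phi_0$ and a set of genuine $3$-cycles, which may be written $A\sqcup\Phi_0A\sqcup\Phi_0^2A$. On $L^r(F)$ we have $Sf=hf$ with $h^3=1$, so the $\omega$-eigenvectors supported there are exactly $\mathbf 1_{F\cap\{h=\omega\}}L^r$; on $L^r(A\sqcup\Phi_0A\sqcup\Phi_0^2A)$ the operator $S$ is a weighted $3$-cyclic shift, and an $\omega$-eigenvector is determined by --- and, since $|h|^{\pm1}$ is bounded, boundedly recovered from --- its restriction to $A$. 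Thus $\ker(S-\omega I)$ is linearly isomorphic to $L^r(B)$ for some measurable $B\subseteq\mathbb{T}$; as $\mu$ is nonatomic, $L^r(B)$ is either $\{0\}$ or infinite-dimensional, contradicting that $\ker(S-\omega I)$ is nonzero and one-dimensional. Hence $T$ is not similar to a Lamperti operator; since every isometry of $L^r$ with $r\ne2$ is Lamperti by \cite[Theorem~3.1]{La}, $T$ is in particular not similar to an isometry.

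Finally, if $T$ were similar to a positive operator $S$, then (as above) $S^3=I$, so $S^{-1}=S^2$ is positive; thus $S$ is a positive invertible operator with positive inverse, hence a Lamperti operator by Kan~\cite[Proposition~3.1]{Kan}, and the preceding paragraph gives a contradiction. The step I expect to be the main obstacle is the identification of $\ker(S-\omega I)$ with an $L^r$-space over a subset of $\mathbb{T}$: one must track the Radon--Nikodym factors of $\Phi_0$ along the $3$-cycles so that the identification is bounded with bounded inverse, and it is exactly the finiteness of the $\Phi_0$-orbits (forced by $S^3=I$) that makes this possible --- the property that Gillespie's Riesz-projection operator does not share.
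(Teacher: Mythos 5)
Your proposal is correct, but it proves the proposition with a different operator and by a different mechanism than the paper. The operator you take, $T=P_-+\omega P_0+\omega^2P_+$ with $T^3=I$, is not the one from \cite{Gi}: the construction on \cite[p.~252]{Gi} that the paper follows is the cube root $T=\exp(iA_\alpha/3)$ of an irrational rotation, so that $T^3=U_\alpha\neq I$ and $\sigma(T)$ lies in a proper arc; the paper then excludes a Lamperti similarity via Theorem \ref{gill1} together with the fact that a multiplication operator would make $U_\alpha$ similar to a scalar-type spectral operator, contradicting \cite[Theorem 2(ii)]{Gi75a}, and excludes positivity via cyclicity of the peripheral (point) spectrum (Lotz \cite{Lo}, Gl\"uck \cite{Gl}). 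Your route instead exploits finite order: if $S f=h\cdot\Phi f$ is Lamperti and $S^3=I$, then (using $h\neq0$ a.e., which follows from invertibility, or your alternative argument via Theorem \ref{gill1} applied to $S^3$) one gets $\Phi_0^3=\mathrm{id}$ and the product of $h$ along each orbit equal to $1$, so over the nonatomic $(\mathbb T,\mu)$ every eigenspace of $S$ is either $\{0\}$ or infinite dimensional, whereas $\ker(T-\omega I)$ is the one-dimensional space of constants; Lemma \ref{spectra} and the fact that the similarity maps eigenspaces onto eigenspaces give the contradiction, and the positive case reduces to the Lamperti one because $S^{-1}=S^2\ge0$, so $S$ is an order isomorphism, hence Lamperti by Kan \cite[Proposition 3.1]{Kan}. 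This is a legitimate and in fact more elementary proof of the proposition as stated: it needs only boundedness of the Riesz projections, Kan's structure theorem and Lamperti's theorem \cite[Theorem 3.1]{La}, and avoids spectral operators and the Lotz/Gl\"uck results entirely; what it gives up is the specific example of the paper, whose cube is an ergodic rotation rather than the identity.

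Two points in your write-up need repair, though neither is fatal. First, drop the attribution to \cite[p.~251]{Gi}: your $T$ is a different operator, and you in any case establish all its properties directly (double power-boundedness is immediate from $T^3=I$, and non-similarity to an isometry follows from the Lamperti case since isometries of $L^r$, $r\neq2$, are Lamperti). Second, the claim $|h|^{\pm1}\in L^\infty$ is not justified --- boundedness of $S$ and $S^{-1}$ controls $h$ only up to the Radon--Nikodym factor of $\Phi_0$ --- but it is also not needed: the dimension count requires no norm estimates, since applying the bounded spectral projection $\frac13(I+\bar\omega S+\bar\omega^2S^2)$ to indicators of disjoint positive-measure subsets of $A$ (or of $F\cap\{h=\omega\}$) already yields infinitely many linearly independent $\omega$-eigenvectors, their restrictions to $A$ (resp.\ to $F$) being linearly independent. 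So the ``main obstacle'' you anticipate disappears. Finally, the decomposition of $\mathbb T$ into the fixed set and $3$-cycles with a measurable cross-section $A$ uses a point realization of the measure-algebra automorphism $\Phi_0$ on a standard space; this is standard but deserves a sentence.
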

\begin{proof}
We will show that Gillespie's construction for $\mathbb T$ \cite[p. 252]{Gi}  (repeated below for 
the sake of completeness) yields $T$ which is not similar to a Lamperti operator, 
nor to a positive one.

\smallskip

Let $\alpha \in (0,1)$ be irrational, and denote by $U_\alpha$ the rotation  of $\mathbb T$
by the angle $2\pi \alpha$. By \cite{Gi75} there exists a bounded operator $A_\alpha$  on 
$L^r(\mathbb T,\mu) $, with $\sigma(A_\alpha) \subset [0,2\pi)$, such that
$U_\alpha=\text{exp}(iA_\alpha)$.  Define $T:= \text{exp}(iA_\alpha/3)$, which is bounded on 
$L^r(\mathbb T,\mu)$. Then we have $\sigma(T) \subset \{\e^{i\theta}: \theta \in [0, 2\pi/3]\}$.
 Putting $\alpha_n = n\alpha -[n\alpha]$, we obtain for $T$ the following representation as 
a multiplier:
$$
T(\sum_n a_n\e^{2\pi in t}) := \sum_n a_n \e^{2\pi i \alpha_n/3}\e^{2\pi i nt}.
$$ 
Clearly $T^3 =U_\alpha$, hence $T$ is  doubly power-bounded, with
$$\sup_{n \in \mathbb Z} \|T^n\| = \max\{1,\|T\|,\|T^2\|\}.$$ 
%

We will use Lemma \ref{spectra} throughout the proof.
\smallskip

Assume that $T$ is similar to a Lamperti operator $Sf =h\cdot \Phi f$.  If $\Phi \ne I$, then 
$\sigma(S) = \sigma(T) \subset \{\e^{i\theta}: \theta \in [0,2\pi/3]\} \ne \mathbb T$ yields a 
contradiction to Theorem \ref{gill1}(b) (as in \cite{Gi}). Thus $\Phi =I$, so $Sf = h\cdot f$. 
By the similarity, $S$ is invertible with $S^{-1}$ power-bounded, so $|h| \equiv 1$ a.e and the 
multiplication operator $S$ is an invertible isometry. We now use an argument implicit in \cite{Gi}. 
Since $S$ is a multiplication operator, so is $S^3$, and therefore $S^3$ is a spectral operator 
of scalar type, by \cite[Theorem 3.1(i)]{Gi}. Hence $U_\alpha$ is similar to a scalar-type 
spectral operator, say $U_\alpha=V^{-1}S^3V$. If $E(\cdot)$ is the spectral measure of $S^3$, 
then $F(\cdot):=V^{-1}E(\cdot)V$ is a spectral measure, and
$$
U_\alpha=V^{-1}S^3V= V^{-1}\big(\int_\mathbb T \lambda \,E(d\lambda)\big)V = 
\int_\mathbb T \lambda\,F(d\lambda),
$$
so $U_\alpha$ is also a spectral operator; but this contradicts Gillespie's 
\cite[Theorem 2(ii)]{Gi75a}. 
Hence $T$ is not similar to a Lamperti operator.

Assume now that $T$ is similar to a positive operator $S$. Then $S$ is  power-bounded, 
and therefore satisfies Lotz's growth condition (G). Hence, by Lotz \cite{Lo} (see 
\cite[Theorem V.4.9, p. 327]{Sch}) $\sigma(S)=\sigma(S)\cap \mathbb T$ is cyclic,  i.e. 
for $\lambda \in \sigma(S)$, all its powers $\lambda^k$ are also in $\sigma(S)=\sigma(T)$. 
But since $\sigma(T)$ is a  proper closed arc of $\mathbb T$, it cannot be cyclic. 
A different proof is by noting that since $L^r$ is reflexive, 
by Gl\"uck \cite[Theorem 5.5]{Gl} the point spectrum $\sigma_p(S)$ is cyclic. 
But the third powers of the eigenvalues of $T$ are $\{\e^{2\pi in\alpha}\}$,
a dense subset of $\mathbb T$, so $\sigma_p(T)=\sigma_p(S)$, being a subset of a proper arc
$\sigma(T)$, cannot be cyclic.
\end{proof}

\noindent{\bf Remarks.} 1. Cuny gave a simple proof that Gillespie's $T$ is not similar to a 
multiplication operator, without using  spectral operators; see Appendix B.

2. Since the cube of the above $T$ is a rotation, $T$ satisfies the pointwise ergodic theorem
without being (similar to) a positive or a Lamperti operator (but $T^3$ is a positive 
Lamperti contraction). 

3. Lemma \ref{spectra} applies also when $S$ is on a different Banach space $Y$, 
and the similarity is by $V$ invertible from $X$ onto $Y$. Thus the proof shows that $T$
defined in Proposition \ref{gill} is not similar to a Lamperti or a positive operator on 
any other $L^r(\Omega',\mu') $ space. Note that $L^r(\Omega,\mu)$ is not isomorphic to
$L^q(\Omega',\mu')$ if $r \ne q$ ($1<r,q<\infty$), since either their type or cotype constants, 
which are invariant under isomorphisms, are different \cite[p. 154, Theorem 6.2.14]{AK} 
(see also \cite[p. 98]{Wo}).

4. Theorem \ref{Lr} does not apply to $T$ of Proposition \ref{gill}, because $T$ 
is not similar to any of the operators in Theorem \ref{Lr}. 
\medskip

We now show that positivity in (iii)  of Theorem \ref{Lr} (which implies the Lamperti property)
is not necessary, so Theorem \ref{ZK} below applies to $T$ of Proposition \ref{gill}. 
The idea of the proof was suggested by Christophe Cuny.

\begin{theo} \label{ZK}
Let $(\Omega,\Sigma,\mu)$ be a $\sigma$-finite measure space and fix $1 < r<\infty$. Let $T$ be
an invertible operator on  (the real or complex) $L^r(\mu)$, such that both $T$ and $T^{-1}$ 
are power-bounded.  Then for every $f \in L^r$ we have norm convergence of 
$\frac1n\sum_{j=1}^n T^{p_j}f$ and of $\frac1N\sum_{j=1}^{\pi(N)}\log p_j T^{p_j}f$.
\end{theo}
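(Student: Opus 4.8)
The plan is to split $T$ into a ``rotation part'' and a ``mixing part'', handle the first by the prime number theorem in arithmetic progressions, and handle the second by transferring the behaviour of prime averages on $\ell^r(\mathbb Z)$ to $T$ by the Coifman--Weiss method. As in Theorem~\ref{realH}, passing to the complexification reduces everything to complex $L^r$. Since $L^r$ is reflexive and each $T^q$ is power-bounded, the mean ergodic theorem gives $L^r=F(T^q)\oplus\overline{(I-T^q)L^r}$; write $P_q$ for the projection onto $F(T^q)$, so that $\|P_q\|\le C:=\sup_n\|T^n\|$. The $P_q$ commute and satisfy $P_qP_{q'}=P_{q'}P_q=P_q$ whenever $q\mid q'$, so $(P_{n!})_n$ is an increasing, uniformly bounded sequence of commuting idempotents, which by reflexivity converges in the strong operator topology to an idempotent $P$. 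Set $G:=PL^r=\overline{\bigcup_{q\ge1}F(T^q)}$, the closed linear span of the eigenvectors of $T$ whose eigenvalue is a root of unity, and $N:=\ker P=\bigcap_{q\ge1}\overline{(I-T^q)L^r}$; then $L^r=G\oplus N$, both summands are invariant under $T$ and $T^{-1}$, and $\sup_n\|\frac1n\sum_{j=1}^n T^{p_j}\|\le C$. It therefore suffices to prove norm convergence of $\frac1n\sum_{j=1}^n T^{p_j}f$ for $f$ in a dense subset of $G$ and for $f$ in a dense subset of $N$.

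On $G$ one argues as in the proof of Theorem~\ref{prime-average}(i). For each $q$, $F(T^q)$ is the finite algebraic direct sum $\bigoplus_{\zeta^q=1}\ker(\zeta I-T)$, the corresponding idempotents being the mean ergodic projections $E_\zeta$ of $\bar\zeta T$ (strong limits of $\frac1M\sum_{k<M}\bar\zeta^kT^k$), with $\|E_\zeta\|\le C$; and on $F(T^q)$ one has $\frac1n\sum_{j=1}^n T^{p_j}=\sum_{\zeta^q=1}(\frac1n\sum_{j=1}^n\zeta^{p_j})E_\zeta$, a finite sum in which each scalar average $\frac1n\sum_{j=1}^n\zeta^{p_j}$ converges to $c(\zeta)$ by the prime number theorem in arithmetic progressions, as shown in~(\ref{coeff}). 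Since $\bigcup_q F(T^q)$ is dense in $G$ and the operators $\frac1n\sum_{j=1}^n T^{p_j}$ are uniformly bounded, $\frac1n\sum_{j=1}^n T^{p_j}f$ converges for every $f\in G$, to $\sum_\zeta c(\zeta)E_\zeta f$.

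It remains to prove that $\frac1n\sum_{j=1}^n T^{p_j}f\to0$ in norm for $f\in N$; this is the heart of the matter. On $N$ the operator $T$ is doubly power-bounded and has no root-of-unity eigenvalue, and $\frac1M\sum_{m=0}^{M-1}(T^q)^mf\to0$ for every $q\ge1$. Since $n\mapsto T^n$ is a uniformly bounded $\mathbb Z$-action on the reflexive space $L^r$, the Coifman--Weiss transference principle yields, for every finitely supported scalar sequence $(c_k)_{k\in\mathbb Z}$, the operator-norm bound $\|\sum_k c_kT^k\|_{L^r\to L^r}\le C^2\,\|\sum_k c_k\tau^k\|_{\ell^r(\mathbb Z)\to\ell^r(\mathbb Z)}$, $\tau$ the shift; this needs no positivity, but for the same reason it gives only operator-norm (not maximal) control, so, unlike in Theorem~\ref{Lr}, one cannot pass through an a.e.\ convergence statement. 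One then feeds in the circle-method structure of the prime-averaging kernels $\mu_n=\frac1n\sum_{j=1}^n\delta_{p_j}$ coming from the work of Vinogradov--Vaughan and \cite{B2},\cite{W}: writing $\mu_n=M_n+R_n$, where $M_n$ is a finite sum of smooth ``bumps'' modulated by roots of unity $e^{2\pi ib/Q}$ (the major arcs, with Ramanujan weights $\mu(Q)/\phi(Q)$) and $\|R_n*\|_{\ell^r(\mathbb Z)\to\ell^r(\mathbb Z)}\to0$, transference gives $\frac1n\sum_{j=1}^n T^{p_j}f=M_n^{T}f+R_n^{T}f$ with $\|R_n^{T}f\|\le C^2\|R_n*\|_{\ell^r}\|f\|\to0$, while $M_n^{T}f$ is a combination of Ces\`aro-type averages of the power-bounded operators $e^{2\pi ib/Q}T$ applied to $f$, each of which converges (by the mean ergodic theorem) to a projection onto some eigenspace $\ker(\lambda I-T)$ with $\lambda$ a root of unity, applied to $f$; since $f\in N$ this is $0$. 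The main obstacle is to make this last step quantitative and uniform in the denominator $Q$, so that the vanishing of all root-of-unity eigenprojections on $N$ genuinely forces $\|M_n^{T}f\|\to0$; this is exactly the point at which the analytic estimates of \cite{B2},\cite{W} (uniform minor-arc bounds and the $\ell^r$-smallness of the tail of the major arcs) must be used directly, without the Lamperti representation~(\ref{powers}) that underlies the purely measure-theoretic transference in the proof of Theorem~\ref{Lr}. (When $N$ happens to be spanned by eigenvectors of $T$ with non-root-of-unity eigenvalues --- as for the operator of Proposition~\ref{gill}, using that the Fourier partial sums are uniformly bounded on $L^r(\mathbb T)$ --- this step is immediate, since there $\frac1n\sum_{j=1}^n T^{p_j}$ acts on $\ker(\lambda I-T)$ as multiplication by $\frac1n\sum_{j=1}^n\lambda^{p_j}\to c(\lambda)=0$.) Once $\frac1n\sum_{j=1}^n T^{p_j}f$ is known to converge for all $f\in L^r$, the norm convergence of $\frac1N\sum_{j=1}^{\pi(N)}\log p_j\,T^{p_j}f$ follows from Proposition~\ref{limits}.
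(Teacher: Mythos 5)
Your treatment of the subspace $N$ — which you yourself call the heart of the matter — is a program, not a proof. The Coifman--Weiss/Calder\'on transference bound $\|\sum_k c_kT^k\|\le C^2\|\sum_k c_k\tau^k\|_{\ell^r(\mathbb Z)\to\ell^r(\mathbb Z)}$ is indeed available without positivity, but the circle-method input you then feed into it does not exist in the clean form you state: there is no decomposition $\mu_n=M_n+R_n$ with $\|R_n*\|_{\ell^r\to\ell^r}\to0$ and $M_n$ a \emph{fixed} finite combination of root-of-unity--modulated Ces\`aro-type averages. The major-arc cutoff must grow with $n$, so the number of modulated terms in $M_n$ grows with $n$, and since the mean ergodic theorem comes with no rate, you cannot deduce $\|M_n^Tf\|\to0$ on $N$ term by term; one needs the $\mu(Q)/\phi(Q)$ decay and the $\ell^r$-smallness of the major-arc tails uniformly in the denominator, which is exactly the quantitative Bourgain--Wierdl analysis. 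You explicitly leave this as ``the main obstacle,'' so the decisive step of the theorem is not established. A secondary gap: the splitting $L^r=G\oplus N$ is asserted via strong convergence of the increasing projections $P_{n!}$, but these are only power-bounded (not contractive) idempotents, and monotone sequences of such projections on a reflexive space do not converge strongly without an argument; this needs to be supplied before $P$ and the decomposition can be used.

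The paper's proof is quite different and avoids both issues. By Proposition \ref{limits} it suffices to prove norm convergence of the $\Lambda$-modulated averages $S_Nf=\frac1N\sum_{j=1}^N\Lambda(j)T^jf$. One then transfers the $q$-variation estimate of Zorin-Kranich \cite[Theorem 1.1]{ZK} for these modulated averages of the shift on $\ell^r(\mathbb Z)$, by precisely the Calder\'on-type pointwise transference you describe (this is where double power-boundedness is used), to bound $\sum_{k=1}^K\|S_{N_{k+1}}f-S_{N_k}f\|_r^r$ along every increasing sequence $(N_k)$: by a constant times $\|f\|_r^r$ when $r>2$, and by $O(K^{(q-r)/q})$ when $1<r<2$ via H\"older, which still forces $(S_Nf)$ to be Cauchy by Lemma \ref{cuny}. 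Thus convergence is obtained directly, with no spectral decomposition, no major/minor-arc analysis, and no need to identify the limit. To complete your route you would essentially have to redo the quantitative estimates of \cite{B2},\cite{W} uniformly in the denominator, or quote a norm-convergence/oscillation result that already encapsulates them — such as the variational estimates of \cite{ZK}, at which point you are back at the paper's argument.
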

\begin{proof}
For the case $r=2$ see Remark 1 to Theorem \ref{prime-average}.

By Proposition \ref{limits} it is enough to prove the 
convergence of $S_nf:=\frac1n\sum_{j=1}^n \Lambda(j)T^jf$ for every $f \in L^r$. We put 
$C:= \sup_{k \in \mathbb Z}\|T^k\|$.

In order to prove that $S_nf$ is Cauchy, it is enough to prove that there is a constant 
$c(f)$ such that for every strictly increasing sequence  $(N_k)$ of positive integers,
\begin{equation} \label{zk}
\sum_{k=1}^\infty \|S_{N_{k+1}}f-S_{N_k}f\|_r^r \le c(f).
\end{equation}
 Fix $(N_k)$; then for $K,M \ge 1$ we have
$$
\sum_{k=1}^K \|S_{N_{k+1}}f-S_{N_k}f\|_r^r \le 
\frac{C^r}M \sum_{m=1}^M \sum_{k=1}^K \|T^m(S_{N_{k+1}}f-S_{N_k}f)\|_r^r =
$$
$$
\frac{C^r}M \sum_{k=1}^K \sum_{m=1}^M 
\int \big| \frac1{N_{k+1}}\sum_{j=1}^{N_{k+1}}\Lambda(j)T^{j+m}f-
\frac1{N_{k}}\sum_{j=1}^{N_{k}}\Lambda(j)T^{j+m}f \big|^r \,d\mu=
$$
$$
\frac{C^r}M \int \sum_{k=1}^K \sum_{m=1}^M 
\big| \frac1{N_{k+1}}\sum_{j=1}^{N_{k+1}}\Lambda(j)T^{j+m}f- 
\frac1{N_{k}}\sum_{j=1}^{N_{k}}\Lambda(j)T^{j+m}f \big|^r \,d\mu  .
$$
We denote the above integrand by
\begin{equation} \label{zk-integrand}
I_{K,M}(\omega):=
\sum_{k=1}^K \sum_{m=1}^M \big| \frac1{N_{k+1}}\sum_{j=1}^{N_{k+1}}\Lambda(j)T^{j+m}f (\omega)- 
\frac1{N_{k}}\sum_{j=1}^{N_{k}}\Lambda(j)T^{j+m}f(\omega) \big|^r  .
\end{equation}

For $\omega \in \Omega$ we define $g:=g_{M,K,\omega}$ on $\mathbb Z$ by $g(m)= T^mf(\omega)$
when $1 \le m \le N_{K+1} +M$ and $g(m) =0$ when $m \le 0$ or $m > N_{K+1}+M$. 
We denote by $\sigma$ the shift on $\mathbb Z$ ($\sigma(n)=n+1$). Then 
$$g(\sigma^j m) = g(m+j)= T^{m+j}f(\omega) \quad \text{for } 1\le m+j \le N_{k+1}+M$$
and $g(\sigma^j m)=0$ otherwise. 

We first prove the theorem when $r>2$.  We estimate the integrand $I_{K,M}(\omega)$ by
$$
I_{K,M}(\omega) =
\sum_{k=1}^K \sum_{m=1}^M \big| \frac1{N_{k+1}}\sum_{j=1}^{N_{k+1}}\Lambda(j)g(\sigma^j m) -
\frac1{N_{k}}\sum_{j=1}^{N_{k}}\Lambda(j)g(\sigma^j m) \big|^r  \le
$$
$$
\sum_{k=1}^K \Big\| \frac1{N_{k+1}}\sum_{j=1}^{N_{k+1}}\Lambda(j) g\circ\sigma^j -
\frac1{N_{k}}\sum_{j=1}^{N_{k}}\Lambda(j)g\circ \sigma^j  \Big\|_{\ell^r(\mathbb Z)}^r .
$$
By the variation estimate of Zorin-Kranich \cite[Theorem 1.1]{ZK} with $q=r > 2$ 
(which holds for complex functions in $\ell^r(\mathbb Z)$), 
there exists a constant $c_r$ such that the last term above is bounded by 
$c_r^r \|g\|_{\ell^r(\mathbb Z)}^r =c_r^r \sum_{m=1}^{N_{K+1}+M} |T^mf(\omega)|^r$. Hence
$$
\sum_{k=1}^K \|S_{N_{k+1}}f-S_{N_k}f\|_r^r \le  
\frac{C^r}M \int c_r^r \sum_{m=1}^{N_{K+1}+M} |T^m f|^rd\mu \le \frac{N_{K+1}+M}MC^{2r}c_r^r\|f\|_r^r.
$$
For $K$ fixed we let $M \to \infty$ and obtain
$\sum_{k=1}^K \|S_{N_{k+1}}f-S_{N_k}f\|_r^r \le  C^{2r}c_r^r \|f\|_r^r$. This proves (\ref{zk})
and yields that $(S_nf)$ is Cauchy.
\medskip

We now prove the theorem when $1<r<2$. For $q>2$ we apply H\"older's inequality  with exponent
$q/r$ to the summation on $k$, and obtain the estimate
$$
I_{K,M}(\omega) =
\sum_{m=1}^M \sum_{k=1}^K \big| \frac1{N_{k+1}}\sum_{j=1}^{N_{k+1}}\Lambda(j)g(\sigma^j m) -
\frac1{N_{k}}\sum_{j=1}^{N_{k}}\Lambda(j)g(\sigma^j m) \big|^r  \le
$$
$$
\sum_{m=1}^M \Big(\sum_{k=1}^K \big| \frac1{N_{k+1}}\sum_{j=1}^{N_{k+1}}\Lambda(j)g(\sigma^j m) -
\frac1{N_{k}}\sum_{j=1}^{N_{k}}\Lambda(j)g(\sigma^j m) \big|^q \Big)^{r/q} K^{(q-r)/q} =
$$
$$
K^{(q-r)/q} \sum_{m=1}^M 
\Big[\Big(\sum_{k=1}^K \big| \frac1{N_{k+1}}\sum_{j=1}^{N_{k+1}}\Lambda(j)g(\sigma^j m) -
\frac1{N_{k}}\sum_{j=1}^{N_{k}}\Lambda(j)g(\sigma^j m) \big|^q \Big)^{1/q} \Big]^r.
$$
The expression in the square brackets is bounded by the $q$-variation norm $\mathcal V_q$
of the sequence $\big(\frac1N\sum_{j=1}^N\Lambda(j) g(\sigma^j(m) \big)_{N>0}$. Hence,
using \cite[Theorem 1.1]{ZK}, we obtain that
$$
I_{K,M}(\omega) \le K^{(q-r)/q} \Big\|\,
\big\|\big(\frac1N\sum_{j=1}^N\Lambda(j) g(\sigma^j(m) \big)_N\big\|_{\mathcal V_q} 
\Big\|^r_{\ell^r(\mathbb Z)} \le 
$$
$$
K^{(q-r)/q} c_{r,q}^r \|g\|_{\ell^r(\mathbb Z)}^r = 
K^{(q-r)/q} c_{r,q}^r\sum_{m=1}^{N_{K+1}+M} |T^mf(\omega)|^r.
$$
Hence
$$
\sum_{k=1}^K \|S_{N_{k+1}}f -S_{N_k}f\|_r^r \le 
\frac{C^r}M \int I_{K,M}(\omega)d\mu \le
\frac{C^r}M c_{r,q}^r K^{(q-r)/q} C^r(N_{K+1}+M)\|f\|_r^r. 
$$
For fixed $K$ we let $M \to \infty$ and obtain 
\begin{equation} \label{cauchy}
\sum_{k=1}^K \|S_{N_{k+1}}f -S_{N_k}f\|_r^r \le {C^{2r}}c_{r,q}^r\|f\|_r^r K^{(q-r)/q}.
\end{equation}
We did not obtain (\ref{zk}), but by the next lemma  $(S_Nf)_N$ is Cauchy.
\end{proof}

\begin{lem}  \label{cuny}
Let $1\le r < \infty$ and let $(f_n)_{n\ge 1}$ be a sequence in a normed space  $X$. 
If there exist $L>0$ and $\delta \in (0,1)$ such that for every increasing subsequence $(n_j)$ 
and every $K \in \mathbb N$ we have
$$
\sum_{j=1}^K\|f_{n_{j+1}}-f_{n_j}\|^r \le L\cdot K^\delta
$$
then $(f_n)$ is Cauchy in $X$.
\end{lem}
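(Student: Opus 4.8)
The plan is to argue by contradiction: assuming $(f_n)$ is not Cauchy, I will extract a \emph{single} strictly increasing subsequence $(n_j)$ along which the partial sums $\sum_{j=1}^{K}\|f_{n_{j+1}}-f_{n_j}\|^r$ grow at least linearly in $K$. Since $\delta<1$, such linear growth eventually exceeds $L K^\delta$, contradicting the hypothesis applied to that very subsequence.

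First I would unwind the negation of the Cauchy condition: there is an $\epsilon>0$ such that for every $N$ there exist indices $p,q\ge N$ with $\|f_p-f_q\|\ge\epsilon$ (in particular $p\ne q$, so after relabelling we may take $p<q$). Applying this repeatedly, each time starting past the previously chosen second index, I would build pairs of positive integers $p_1<q_1<p_2<q_2<\cdots$ with $\|f_{p_i}-f_{q_i}\|\ge\epsilon$ for every $i\ge 1$.

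Next, interleave these into one increasing sequence by setting $n_{2i-1}:=p_i$ and $n_{2i}:=q_i$. Then $f_{n_{2i}}-f_{n_{2i-1}}=f_{q_i}-f_{p_i}$ has norm $\ge\epsilon$, so for every $K\in\mathbb N$,
$$
\sum_{j=1}^{2K}\|f_{n_{j+1}}-f_{n_j}\|^r\ \ge\ \sum_{i=1}^{K}\|f_{q_i}-f_{p_i}\|^r\ \ge\ K\epsilon^r .
$$
On the other hand, the hypothesis applied to the subsequence $(n_j)$ gives
$$
\sum_{j=1}^{2K}\|f_{n_{j+1}}-f_{n_j}\|^r\ \le\ L(2K)^\delta\ =\ 2^\delta L\,K^\delta .
$$
Combining the two estimates yields $K^{\,1-\delta}\le 2^\delta L/\epsilon^r$ for every $K\in\mathbb N$, which is impossible since $1-\delta>0$ forces $K^{1-\delta}\to\infty$. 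Hence $(f_n)$ must be Cauchy.

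The single point that carries the argument — and the only place any thought is needed — is the observation that failure of the Cauchy property does \emph{not} directly hand us a subsequence all of whose consecutive increments are bounded below by $\epsilon$; what it does furnish is a subsequence in which at least \emph{half} of the consecutive increments are large, obtained by interleaving the pairs $p_i<q_i$. The attendant loss of a factor $2$ (hence the constant $2^\delta$) is harmless, since we only need the exponent $1-\delta$ of $K$ to be positive. Everything else is routine index bookkeeping, and the argument is identical for every $r\in[1,\infty)$.
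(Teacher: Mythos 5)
Your proof is correct and follows essentially the same route as the paper: negate the Cauchy condition, interleave the pairs with $\|f_{p_i}-f_{q_i}\|\ge\epsilon$ into a single increasing subsequence, and derive $K\epsilon^r\le L(2K)^\delta$, which contradicts $\delta<1$. The only difference is that you spell out the interleaving and the harmless factor $2^\delta$ explicitly, which the paper leaves implicit.
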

\begin{proof} If $(f_n)$ is not Cauchy, then for some $\epsilon>0$ we can construct an 
increasing sequence $(n_j)$ with $\|f_{n_{2j}} -f_{n_{2j-1}}\| > \epsilon$. Then 
for any $K$ we obtain $K\epsilon^r \le L(2K)^\delta$, which yields a contradiction.
\end{proof}

\noindent
{\bf Remarks.} 1. Lemma \ref{cuny} and its use for proving Theorem \ref{ZK} when $r<2$
are inspired by the approach of Bourgain in \cite[p. 209]{B2} (see also \cite[p. 220]{B2}). 
Bourgain's estimates were used in \cite{JOW}.


2. Guy Cohen noted that the proof of Theorem \ref{ZK} is valid for non-invertible power-bounded
$T$ if for some $C'$ we have $\|f\|_r \le C' \|T^mf\|_r$ for every $f \in L^r$ 
(e.g. $T$ is any isometry), since invertibility is used only in the estimate
$\|S_{N_{k+1}}f-S_{N_k}f \|_r \le C\|T^m(S_{N_{k+1}}f-S_{N_k}f)\|_r$. In fact, the proof is 
valid when $T$ is a power-bounded quasi-isometry, provided we take $M$ from the sequence $(M_n)$
in the definition of quasi-isometry. It follows that the Lamperti condition of Theorem \ref{Lr}(iv) 
is not necessary when $T$ there is power-bounded.

3. Proposition \ref{gill} provides examples to which Theorem \ref{ZK} applies while Theorem 
\ref{Lr} does not.
\bigskip

\section{Problems} 

In this section we list some problems which arise from our results.
\medskip

1. {\it Is there a necessary and sufficient condition on a sequence $(a_n)_{n\in \mathbb N}$
which ensures that $\|\frac1n \sum_{k=1}^n a_kT^k x\| \to 0$ for every weakly almost periodic $T$
and every flight vector of $T$?} In particular, is $(a_n) \in \overline{\mathcal A}^{W_1}$
necessary? The sequences $\Lambda(n))$ and $\Lambda'(n))$ are not in $\overline{\mathcal A}^{W_1}$,
but we do not know whether they modulate all flight vectors of weakly almost periodic operators;
they modulate all almost periodic operators, by \cite[Proposition 1.4]{LOT}.
\smallskip

2. {\it Is Proposition \ref{stable} true for power-bounded operators in $H$?}
The answer is probably "NO", since the Blum-Hanson theorem fails, by M\"uller-Tomilov \cite{MT}.
\smallskip


3. {\it If $\sup_n \frac1n\sum_{k=1}^n |a_k| < \infty$ and $c(1):=\lim \frac1n \sum_{k=1}^n a_k$ 
exists, does $\ba$ modulate all weakly stable operators in $H$ (extending Corollary 
\ref{stable+fix})?} It might be true, with negative answer to the previous problem.
\smallskip

4. {\it Is Theorem \ref{prime-average}(ii) true also for power-bounded operators in $H$?}
In view of the recent positive result of ter Elst and M\"uller \cite{EM},  proving convergence
of the averages along the squares, the cubes  and other polynomials, the answer might be "YES".
\smallskip

5. {\it Is Theorem \ref{Lr} true also for power-bounded positive Lamperti operators?} The proof
of Theorem \ref{Lr} uses the pointwise convergence of averages along the primes, proved in 
\cite{JOW} for operators satisfying (iv) of the theorem. For power-bounded positive Lamperti
operators, without invertibility,  we are looking for the norm convergence only.

\bigskip

\bigskip

\section{APPENDIX A: On the maximal inequality along the primes } 
\smallskip

\centerline{\it by M\'at\'e Wierdl}
\smallskip

The proof of the pointwise convergence of averages along the primes  in \cite{W} is based on
transforming the problem to the equivalent one of convergence of modulated averages with the 
modified von Mangoldt weights $(\Lambda'(k))$. The next lemma is the abstraction of this equivalence.

\begin{lem} \label{w1}
Let $(t_k)_{k>0}$ be a sequence of reals. Then $\frac1n\sum_{j=1}^n t_{p_j}$ converges 
if and only if $\frac1N\sum_{j=1}^{\pi(N)} t_{p_j}\log p_j$ converges.
\end{lem}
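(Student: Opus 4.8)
The plan is to prove this as a purely scalar (Abel/Cesàro summation) statement, mirroring the operator-theoretic argument of Proposition~\ref{limits} and Lemma~\ref{fkh-lemma} but with the orbit $T^k x$ replaced by the bounded scalar sequence $(t_k)$. The key point is that $(t_k)$ need not be assumed bounded a priori, since boundedness of one of the averages forces it; so I will first reduce to a bounded sequence. Indeed, if $\frac1n\sum_{j=1}^n t_{p_j}$ converges, then $\frac{t_{p_n}}{n} = \frac1n\sum_{j=1}^n t_{p_j} - \frac{n-1}{n}\cdot\frac1{n-1}\sum_{j=1}^{n-1} t_{p_j}\to 0$, so $t_{p_n}=o(n)$; by the prime number theorem $p_n\sim n\log n$, hence $t_{p_n}\log p_n = o(n\log n) = o(p_n\cdot\tfrac{\log p_n}{\log n})$, which one checks gives $\frac1N\sum_{j=1}^{\pi(N)} t_{p_j}\log p_j$ a tail-controlled remainder. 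Conversely, if $\frac1N\sum_{j=1}^{\pi(N)} t_{p_j}\log p_j$ converges, a similar one-step difference argument (using $\pi(p_{n+1})-\pi(p_n)=1$ and $\log p_n/p_n\to 0$) gives $t_{p_n}\log p_n = o(p_n)$, i.e. $t_{p_n}=o(p_n/\log p_n)=o(n)$. Either way we may, after subtracting a finite initial segment whose contribution to both averages vanishes in the limit, assume $\|(t_{p_j})_j\|_\infty \le C$ for the asymptotic analysis — or more cleanly, truncate: replace $t_{p_j}$ by $t_{p_j}1_{\{|t_{p_j}|\le \epsilon j\}}$ and absorb the rest.

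Once we are dealing with a bounded sequence (say $|t_{p_j}|\le C$ for all $j$, which is the only case where Lemma~\ref{fkh-lemma} is quoted), the equivalence is immediate: set $b_k := t_k$ for $k$ prime and $b_k:=0$ otherwise, so $\|(b_k)\|_\infty\le C$ and $\Lambda'(k)b_k = \log k\,t_k 1_{\mathbb P}(k)$. Applying Lemma~\ref{fkh-lemma} (formula~(\ref{eq:Lambda-primes})) with this $(b_k)$ gives
$$
\lim_{N\to\infty}\Bigl|\frac1N\sum_{k=1}^N \Lambda'(k)b_k - \frac1{\pi(N)}\sum_{p\le N,\ p\in\mathbb P} b_p\Bigr| = 0,
$$
that is, $\bigl|\frac1N\sum_{j=1}^{\pi(N)} t_{p_j}\log p_j - \frac1{\pi(N)}\sum_{j=1}^{\pi(N)} t_{p_j}\bigr|\to 0$. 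Since $\pi(N)\to\infty$ continuously (it increases by steps of $1$), $\frac1{\pi(N)}\sum_{j=1}^{\pi(N)} t_{p_j}$ converges if and only if $\frac1n\sum_{j=1}^n t_{p_j}$ does, and the displayed difference then transfers convergence back and forth between $\frac1N\sum_{j=1}^{\pi(N)} t_{p_j}\log p_j$ and $\frac1{\pi(N)}\sum_{j=1}^{\pi(N)} t_{p_j}$. Thus all three quantities converge or diverge together, and when they converge the limits coincide.

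The main obstacle is the reduction to the bounded case, since Lemma~\ref{fkh-lemma} genuinely needs a uniform bound on $(b_k)$. The cleanest route is the truncation argument: fix $\epsilon>0$, write $t_{p_j} = t_{p_j}1_{\{|t_{p_j}|\le \epsilon j\}} + t_{p_j}1_{\{|t_{p_j}|> \epsilon j\}}$, and observe that in each of the two convergence hypotheses the relevant partial average of $(t_{p_j})$ has bounded Cesàro means, so by Lemma~\ref{W1}-type reasoning the contribution of the large part is controlled; meanwhile the bounded part has $\|\cdot\|_\infty\le\epsilon N$ on $[1,N]$, and one rescales Lemma~\ref{fkh-lemma} accordingly (the bound $3\epsilon C$ there becomes $3\epsilon\cdot\epsilon N/\text{something}$ — one must track that the $\log p_j$ weights and the $1/N$ normalization interact correctly, which is where the only real care is needed). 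Alternatively, and perhaps more transparently, one first proves $t_{p_n}=o(n)$ from \emph{whichever} hypothesis holds (via the one-step-difference computations above together with $p_n\sim n\log n$), and then notes that $t_{p_n}=o(n)$ already makes the sequence "effectively bounded on average" in the precise sense required to invoke Lemma~\ref{fkh-lemma} after an Abel-summation splitting at level $n$. I would present the $o(n)$ argument first, as it is short and self-contained, and then quote Lemma~\ref{fkh-lemma} on the truncated bounded pieces.
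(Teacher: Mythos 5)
Your argument is correct in the bounded case --- there it is essentially the scalar version of Proposition~\ref{limits}, obtained by feeding $b_k=t_k1_{\P}(k)$ into Lemma~\ref{fkh-lemma}, and the passage between the index sets $n$ and $N$ via $\pi(N)$ is fine. But the lemma is stated (and is needed, e.g.\ for $t_a=f(\tau^a\omega)$ with $f\in L^r$) for arbitrary real sequences, and your reduction to the bounded case is where the proposal breaks down. From either hypothesis you only obtain $t_{p_n}=o(n)$ (your one-step difference computations for this are correct), and $o(n)$ is far from boundedness: subtracting a finite initial segment does not help, and the truncation $t_{p_j}1_{\{|t_{p_j}|\le\epsilon j\}}$ removes, for each fixed $\epsilon$, only finitely many terms, so it changes nothing asymptotically and the truncated sequence is still unbounded (take $t_{p_j}=\sqrt j$). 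Nor can the ``rescaled'' use of Lemma~\ref{fkh-lemma} be repaired by bookkeeping: its proof bounds the discrepancy by $\sup_{k\le N}|b_k|$ times a coefficient error which is only $o(1)$ (of size governed by $1-\frac1N\sum_{j\le\pi(N)}\log p_j$ and similar prime-number-theorem terms), and when $\sup_{j\le\pi(N)}|t_{p_j}|$ grows, say like $\sqrt{\pi(N)}$, the product need not tend to $0$. So as written you prove the lemma only under an extra boundedness hypothesis that is not in the statement and cannot be forced.

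The missing idea is to avoid $\sup_j|t_{p_j}|$ altogether and use instead the boundedness of the convergent averages themselves, via summation by parts. Writing $\sigma_n=\sum_{j\le n}t_{p_j}$, $\tau_n=\sum_{j\le n}t_{p_j}\log p_j$ and $W_n=\sum_{j\le n}\log p_j\sim p_n$, Abel summation exhibits $(\tau_n/W_n)$ as a regular (Toeplitz) transform of $(\sigma_n/n)$: the rows sum to $1$ exactly, the absolute row sums are bounded because $n\log p_n/W_n\to1$, and each column tends to $0$; since the weights $\log p_j$ are increasing, the inverse transform expressing $\sigma_n/n$ through $(\tau_j/W_j)$ is again regular (indeed with nonnegative entries). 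Combined with $p_{n+1}/p_n\to1$, which lets you pass between the parameter $N$ and $n=\pi(N)$, this yields the equivalence, with equal limits, for arbitrary $(t_k)$, convergent averages being bounded. This is exactly what the proof of Lemma~1 of \cite{W} does; the paper's proof of the present lemma consists of observing that that proof applies verbatim with $f(T^ax)$ replaced by $t_a$, so your route genuinely diverges from it at the point where it fails.
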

\begin{proof} Lemma 1 of \cite{W} is valid when then sequence $f(T^ax)$ is replaced by a sequence 
$(t_a)$, with the same proof. Then use the remark following the statement of \cite[Lemma 1]{W}.
\end{proof}

In \cite{W} a maximal inequality  was proved for the $\Lambda'$-modulated averages.
We show below that this inequality yields (is equivalent to) the maximal inequality for 
the averages along the primes.

\begin{lem} \label{w2}
There exists a constant $c_1$ such that for any sequence of reals $(t_k)_{k>0}$ we have
$$
\sup_N \frac1N\sum_{j=1}^{\pi(N)} |t_{p_j}|\log p_j \le
c_1 \sup_n \frac1n\sum_{j=1}^n |t_{p_j}| .
$$
\end{lem}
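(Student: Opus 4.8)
The plan is to avoid summation by parts entirely and use only the crude bound $\log p_j\le\log N$ together with the Chebyshev-type estimate $\pi(N)=O(N/\log N)$; in fact the sharper form $\pi(N)\log N/N\to1$ already appears in the proof of Lemma~\ref{fkh-lemma}, so I may freely invoke it.

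First I would dispose of the trivial case. Writing $s:=\sup_n\frac1n\sum_{j=1}^n|t_{p_j}|$, there is nothing to prove when $s=\infty$, so assume $s<\infty$; then $\sum_{j=1}^n|t_{p_j}|\le sn$ for every $n\ge1$. Next, fix $N\ge2$. Since the primes counted by $\pi(N)$ are precisely $p_1<\cdots<p_{\pi(N)}\le N$, we have $\log p_j\le\log N$ for $1\le j\le\pi(N)$, and therefore
$$\sum_{j=1}^{\pi(N)}|t_{p_j}|\log p_j\le(\log N)\sum_{j=1}^{\pi(N)}|t_{p_j}|\le s\,\pi(N)\log N.$$
Dividing by $N$ gives $\frac1N\sum_{j=1}^{\pi(N)}|t_{p_j}|\log p_j\le s\cdot\frac{\pi(N)\log N}{N}$.

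Finally I would set $c_1:=\sup_{N\ge2}\frac{\pi(N)\log N}{N}$, which is finite because $\frac{\pi(N)\log N}{N}\to1$ by the prime number theorem (equivalently, because of Chebyshev's bound $\pi(N)\le c_1 N/\log N$); the case $N=1$ is trivial, the left-hand side then being the empty sum. This yields $\frac1N\sum_{j=1}^{\pi(N)}|t_{p_j}|\log p_j\le c_1 s$ for every $N$, and taking the supremum over $N$ proves the lemma. I do not expect any genuine obstacle here: the key observation is simply that no Abel summation is needed, and once $\log p_j\le\log N$ is used the inequality reduces to the elementary boundedness of $\pi(N)\log N/N$.
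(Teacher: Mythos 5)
Your proof is correct and is essentially identical to the paper's own argument: both replace $\log p_j$ by $\log N$, bound the resulting average $\frac1{\pi(N)}\sum_{j\le\pi(N)}|t_{p_j}|$ by $\sup_n \frac1n\sum_{j=1}^n |t_{p_j}|$, and take $c_1=\sup_N \frac{\pi(N)\log N}{N}$, finite by the prime number theorem. No further comment is needed.
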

\begin{proof} Since $\frac{\pi(N)\log N}N \to 1$, we put $c_1 := \sup_N \frac{\pi(N)\log N}N < \infty $.
Since $p_{\pi(N)}\le N$, for $(t_k)$ non-negative we have
$$
\frac 1N \sum_{j=1}^{\pi(N)} t_{p_j}\log {p_j} \le \frac 1N \sum_{j=1}^{\pi(N)} t_{p_j}\log N = 
\frac{\pi(N)\log N}N \frac1{\pi(N)}\sum_{j=1}^{\pi(N)} t_{p_j} 
\le c_1 \sup_n \frac1n\sum_{j=1}^n t_{p_j}\ ,
$$
which yields the desired inequality. 
\end{proof}

\begin{lem} \label{w3}
There exist constants $c$ and $c_2$ such that for any sequence
$(t_k)$ such that $t^*:= \sup_n \frac1n \sum_{k=1}^n |t_k| < \infty$ we have 
\begin{equation} \label{sup-estimate}
 \sup_n \frac1n\sum_{j=1}^n |t_{p_j}| \le 
c_2 t^* + c \sup_N \frac1N\sum_{j=1}^{\pi(N)} |t_{p_j}|\log p_j \,.
\end{equation}
\end{lem}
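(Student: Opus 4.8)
The plan is to prove \eqref{sup-estimate} by splitting the primes up to $p_n$ at the threshold $\sqrt{p_n}$: the small primes $p\le\sqrt{p_n}$ will be controlled by $t^*$ alone (they are so few that they contribute only $O(n)$ after summation), while for the large primes $\sqrt{p_n}<p\le p_n$ the weight $\log p$ is comparable to $\log p_n$, so the unweighted sum is comparable to the $\Lambda'$-weighted one. First I would reduce to $t_k\ge 0$ (replacing $t_k$ by $|t_k|$ changes neither side of \eqref{sup-estimate} nor $t^*$) and assume $A:=\sup_N\frac1N\sum_{j=1}^{\pi(N)}t_{p_j}\log p_j<\infty$, since otherwise \eqref{sup-estimate} is trivial; this gives $\sum_{j=1}^{\pi(N)}t_{p_j}\log p_j\le AN$ for every $N$, and in particular, taking $N=p_n$ (so that $\pi(p_n)=n$), $\sum_{j=1}^{n}t_{p_j}\log p_j\le Ap_n$.

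Fix $n$ and write $\sum_{j=1}^n t_{p_j}=\Sigma_1+\Sigma_2$, the two sums running over $\{j:p_j\le\sqrt{p_n}\}$ and $\{j:\sqrt{p_n}<p_j\le p_n\}$ respectively. For $\Sigma_1$ I would use only the definition of $t^*$: $\Sigma_1\le\sum_{k\le\sqrt{p_n}}t_k\le\sqrt{p_n}\,t^*$. By the prime number theorem $\pi(x)\log x/x\to 1$, so taking $x=p_n$ one gets $K:=\sup_{n\ge1}p_n/(n\log p_n)<\infty$; since $\log x\le\sqrt x$ for $x\ge1$, this yields $p_n\le Kn\sqrt{p_n}$, hence $\sqrt{p_n}\le Kn$ and $\Sigma_1\le Kn\,t^*$. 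For $\Sigma_2$, each summand has $\log p_j>\tfrac12\log p_n$, so $t_{p_j}<\tfrac2{\log p_n}\,t_{p_j}\log p_j$; summing over these $j$, then enlarging to all $j\le n$ (the terms being nonnegative), and using $\sum_{j=1}^n t_{p_j}\log p_j\le Ap_n$, we obtain $\Sigma_2\le\tfrac{2Ap_n}{\log p_n}\le 2KAn$. Adding the two bounds and dividing by $n$ gives $\frac1n\sum_{j=1}^n t_{p_j}\le Kt^*+2KA$ for every $n$, which is \eqref{sup-estimate} with $c_2=K$ and $c=2K$.

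There is no genuine obstacle; the one point deserving attention is the choice of splitting point. Splitting at a \emph{fixed} bound would fail, because between that bound and $\sqrt{p_n}$ there remain primes whose logarithm is not comparable to $\log p_n$; splitting at $\sqrt{p_n}$ is exactly what makes $\Sigma_1$ negligible (of size $O(\sqrt{p_n})=O(n)$, via the crude consequence $p_n=O(n^2)$) while keeping the weights in $\Sigma_2$ uniformly comparable. The only other care needed is the uniformity of the two elementary prime-counting estimates $\sup_n p_n/(n\log p_n)<\infty$ and $p_n=O(n^2)$, both immediate from $\pi(x)\log x/x\to 1$, the very fact already invoked in the proof of Lemma~\ref{w2}; the degenerate small cases (e.g.\ $n=1$, where $\Sigma_1$ is empty) are absorbed into the constants.
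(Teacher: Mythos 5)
Your proof is correct and follows essentially the same route as the paper: split the primes up to $p_n$ at $\sqrt{p_n}$, bound the small primes by $t^*$ via the prime number theorem, and use $\log p_j>\tfrac12\log p_n$ to compare the large-prime sum with the $\Lambda'$-weighted averages. The only cosmetic difference is that you pass through $\log x\le\sqrt x$ to get $\sqrt{p_n}\le Kn$, whereas the paper takes the constant $c_2=\sup_N \sqrt N/\pi(N)$ directly.
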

\begin{proof} 
Define $c :=2 \sup_{N\ge 2} \frac N{\pi(N)\log N}$  and $c_2:= \sup \frac{\sqrt N}{\pi(N)}$. 
Denote $\mathbb P_{N}:= \{p\in\mathbb P, p\le N\}$.  We may assume $t_k \ge 0$.  Then 
$$
\frac1{\pi(N)} \sum_{p\in\mathbb P_{\sqrt N}} t_p \le 
\frac1{\pi(N)} \sum_{k=1}^{[\sqrt N]} t_k=
\frac{[\sqrt N]}{\pi(N)}\cdot \frac1{[\sqrt N]} \sum_{k=1}^{[\sqrt N]} t_k 
\le c_2 t^*
$$ 
and for $N >1$ we have 
$$
\frac1{\pi(N)} \sum_{\sqrt N < p\in \mathbb P_{N}} t_p \le
\frac1{\pi(N)} \sum_{ \sqrt N < p\in \mathbb P_{N}}\frac{\log p}{\log \sqrt N }\ t_p =
$$
$$
\frac N{\pi(N)\frac12\log N} \cdot \frac1N \sum_{p\in \mathbb P_{N}} (\log p)t_p \le
c \sup_N \frac1N \sum_{j=1}^{\pi(N)}t_{p_j}\log {p_j} \ .
$$
Putting the above together we obtain (\ref{sup-estimate}).
\end{proof}

\begin{cor}
Let $(\Omega,\Sigma,\mu)$ be a $\sigma$-finite measure space and fix $1<r<\infty$. Let $T$
be a mean-bounded (e.g. power-bounded) operator on $L^r(\mu)$. If 
\begin{equation} \label{max-p}
\big\| \sup_n |\frac1n \sum_{j=1}^n T^{p_j}f| \, \big\|_{L^r} \le C_1 \|f\|_{L^r} 
\quad \forall f\in L^r
\end{equation}
then also
\begin{equation} \label{max-L}
\big\| \sup_n |\frac1N \sum_{j=1}^{\pi(N)} \log p_j T^{p_j}f| \big\|_{L^r} \le 
C_2 \|f\|_{L^r} \quad \forall f\in L^r.
\end{equation}
Conversely, if $T$ satisfies (\ref{max-L}) and 
\begin{equation} \label{max-e}
\big\| \sup_n |\frac1n \sum_{k=1}^n T^kf| \, \big\|_{L^r} \le C_0 \|f\|_{L^r} 
\quad \forall f\in L^r
\end{equation}
then (\ref{max-p}) holds.
\end{cor}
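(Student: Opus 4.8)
The three displayed estimates are $L^r$-maximal inequalities, and the links between them are operator-theoretic versions of the scalar inequalities in Lemmas \ref{w2} and \ref{w3}. The plan is to prove, for each $f\in L^r$ and a.e.\ $\omega$, pointwise comparisons between the maximal functions
\[
\Phi(\omega):=\sup_n\Big|\frac1n\sum_{j=1}^n T^{p_j}f(\omega)\Big|,\quad
\Psi(\omega):=\sup_N\Big|\frac1N\sum_{j=1}^{\pi(N)}\log p_j\,T^{p_j}f(\omega)\Big|,\quad
\Theta(\omega):=\sup_n\Big|\frac1n\sum_{k=1}^n T^kf(\omega)\Big|,
\]
and then pass to $L^r(\Omega)$-norms, using the relevant hypothesis. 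The point where this differs from Lemmas \ref{w2}--\ref{w3} is that those estimates concern the non-negative sequence $(|t_k|)$ and use \emph{termwise} inequalities, whereas here the quantities $T^kf(\omega)$ are signed (or complex); termwise domination must therefore be replaced by summation by parts applied to partial sums, which \emph{are} controlled by the relevant maximal function: $|\sum_{i\le j}T^{p_i}f(\omega)|\le j\,\Phi(\omega)$, $|\sum_{i\le j}\log p_i\,T^{p_i}f(\omega)|\le p_j\,\Psi(\omega)$ (choosing $N=p_j$ in the definition of $\Psi$, so that $\sum_{i\le j}\log p_i\,T^{p_i}f=\sum_{k\le p_j}\Lambda'(k)T^kf$), and $|\sum_{i\le k}T^if(\omega)|\le k\,\Theta(\omega)$.

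For $(\ref{max-p})\Rightarrow(\ref{max-L})$ I would sum by parts against the increasing weights $\log p_j$. Writing $S_j:=\sum_{i=1}^j T^{p_i}f(\omega)$, one obtains
\[
\Big|\frac1N\sum_{j=1}^{\pi(N)}\log p_j\,T^{p_j}f(\omega)\Big|
\le\frac{\log p_{\pi(N)}}{N}\,|S_{\pi(N)}|+\frac1N\sum_{j=1}^{\pi(N)-1}\big(\log p_{j+1}-\log p_j\big)|S_j|
\le\frac{2\pi(N)\log N}{N}\,\Phi(\omega),
\]
using $|S_j|\le j\Phi(\omega)$, $\sum_{j<\pi(N)}(\log p_{j+1}-\log p_j)=\log p_{\pi(N)}-\log p_1\le\log N$, and $p_{\pi(N)}\le N$. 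With $c_1:=\sup_N\frac{\pi(N)\log N}{N}$ as in Lemma \ref{w2} this gives $\Psi\le 2c_1\Phi$ pointwise, hence (\ref{max-L}) with $C_2=2c_1C_1$; this implication uses neither mean-boundedness nor (\ref{max-e}).

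For the converse $(\ref{max-L})+(\ref{max-e})\Rightarrow(\ref{max-p})$ I would imitate the splitting of Lemma \ref{w3}. Fix $n$, put $N:=p_n$, so $\frac1n\sum_{j=1}^nT^{p_j}f(\omega)=\frac1{\pi(N)}\sum_{p\le N}T^pf(\omega)$, and split at $\sqrt N$. In the tail $\sqrt N<p\le N$ write $T^pf=\frac1{\log p}(\log p\,T^pf)$; the coefficients $1/\log p$ decrease and are $\le 2/\log N$ there, so Abel summation together with $|\sum_{i\le j}\log p_i\,T^{p_i}f(\omega)|\le p_j\,\Psi(\omega)\le N\,\Psi(\omega)$ bounds the tail by a constant multiple of $\frac{N}{\pi(N)\log N}\,\Psi(\omega)$, hence by $c'\,\Psi(\omega)$ (finite since $N/(\pi(N)\log N)\to1$). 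In the head rewrite $\sum_{p\le\sqrt N}T^pf(\omega)=\sum_{k=1}^{[\sqrt N]}1_{\mathbb P}(k)T^kf(\omega)$ and sum by parts against the indicator $1_{\mathbb P}$: from $|\sum_{i\le k}T^if(\omega)|\le k\,\Theta(\omega)$ and the bound $\sum_{k<M}|1_{\mathbb P}(k+1)-1_{\mathbb P}(k)|\le 2\pi(M)$ for the total variation of $1_{\mathbb P}$, one gets $|\sum_{k\le M}1_{\mathbb P}(k)T^kf(\omega)|\le 3M\pi(M)\,\Theta(\omega)$; taking $M=[\sqrt N]$ and using Chebyshev's estimates $c\,x/\log x\le\pi(x)\le C\,x/\log x$, which give $M\pi(M)=O(\pi(N))$, the head is $\le c_2'\,\Theta(\omega)$. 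Combining, $\Phi\le c_2'\,\Theta+c'\,\Psi$ pointwise; taking $L^r$-norms and applying (\ref{max-e}) and (\ref{max-L}) yields (\ref{max-p}).

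The one genuinely delicate step is the head estimate in the converse: the naive bound $\frac1{\pi(N)}|\sum_{p\le\sqrt N}T^pf(\omega)|\le\frac{\pi(\sqrt N)}{\pi(N)}\,\Phi(\omega)$ is circular (making it work would require first establishing a.e.\ finiteness of $\Phi$ on a dense class and then an absorption argument exploiting $\pi(\sqrt N)/\pi(N)\to0$). The clean route is therefore the detour through the ordinary Ces\`aro maximal function $\Theta$ via summation by parts against $1_{\mathbb P}$, and the verification that the prime-counting factors $\pi(\sqrt N),\pi(N)$ together with the total variation of $1_{\mathbb P}$ combine into a bounded constant --- exactly where the prime number theorem (or Chebyshev) enters, just as in Lemma \ref{w3}.
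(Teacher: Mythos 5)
Your argument is correct, and it is worth saying how it relates to what the paper (implicitly) does: the corollary is stated with no written proof, the intention being to substitute $t_k:=T^kf(\omega)$ into Lemmas \ref{w2} and \ref{w3} and integrate. That substitution, however, compares the maximal functions with the modulus \emph{inside} the averages (sums of $|T^{p_j}f(\omega)|$), and these dominate but are not dominated by the quantities appearing in (\ref{max-p})--(\ref{max-e}); closing the gap is immediate when the modulus can be moved inside, e.g.\ for positive operators or the Lamperti-type operators and shifts to which the corollary is actually applied (where $|T^kf|\le |T|^k|f|$ with $|T|$ of the same class), but it is not automatic for a general mean-bounded $T$. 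Your variant keeps exactly the skeleton of Lemmas \ref{w2}--\ref{w3} — the same splitting of $\{p\le N\}$ at $\sqrt N$ and the same prime-counting inputs $\sup_N \pi(N)\log N/N<\infty$, $\sup_{N\ge2} N/(\pi(N)\log N)<\infty$, $[\sqrt N]\,\pi([\sqrt N])=O(\pi(N))$ — but replaces the termwise domination (valid only for nonnegative terms) by Abel summation against the partial sums $S_j$, $U_j$, $V_k$, each of which is controlled by the relevant maximal function; this yields the pointwise bounds $\Psi\le 2c_1\Phi$ and $\Phi\le c_2'\Theta+c'\Psi$ directly for the modulus-outside maximal functions, hence the corollary for arbitrary mean-bounded $T$, at the cost only of slightly larger constants, and (consistently with the statement) your first implication uses neither mean-boundedness nor (\ref{max-e}). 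Two small points to make explicit when writing it up: in the tail estimate, besides the boundary terms one needs $|U_{j_0-1}|\le N\Psi$ and the telescoping bound $\sum_{j}\bigl(1/\log p_j-1/\log p_{j+1}\bigr)\le 2/\log N$, which you indeed have since every prime in the range exceeds $\sqrt N$; and in the head estimate the Chebyshev comparison $M\pi(M)=O(\pi(N))$ is needed only for $M=[\sqrt N]\ge 2$, the head being empty for $N<4$.
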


The above corollary applies to $T$ induced by a measure preserving 
transformation, in particular to $T$ induced on $\ell^r(\mathbb Z)$ or $\ell^r(\mathbb Z^+)$ 
by the shift. It applies also to Lamperti quasi-isometries on $L^r$ (see Proposition \ref{kan}).
\medskip

Recently, Pavel Zorin-Kranich \cite{ZK} proved an estimate for the $q$-variation of the 
modulated averages of the shift in $\ell^r(\mathbb Z)$ along the primes, with  modulation
by the von Mangoldt function $\Lambda(k)$.  Similar estimates when the modulation is by 
$\Lambda'(k)$ are given by Mirek, Trojan and Zorin-Kranich \cite{MTZ}. These inequalities
are equivalent to an inequality for the $q$-variation of the averages along the primes 
without modulation. The proof will be published elsewhere.
\bigskip

\section{APPENDIX B: On certain multiplier operators in $L^p$, $p \ne 2$}
\smallskip
\centerline{\it by Christophe Cuny}
\medskip

Let $\mu$ be Lebesgue's measure on $[0,1)$, and identify $\mathbb T$ with $[0,1)$ by $t \to \e^{2\pi it}$.
For $n\in \Z$, set $e_n(t)={\rm e}^{2i\pi nt}$, $t\in [0,1)$. 

Fix $2 \ne p \in (1,\infty)$ and let $(a_n)_{n\in \Z}$ be a sequence of unimodular complex numbers 
which is a {\it multiplier} in $L^p$,
i.e. the operator 
\begin{equation} \label{multip}
T \big(\sum_{n\in \Z}c_n(f) e_n\big) =\sum_{n\in \Z}a_n c_n(f)e_n
\end{equation}
defines an invertible doubly power-bounded operator on $L^p$.

\begin{prop}
If $2\ne p \in(1,\infty)$ and the $a_n$ are all different, then the multiplier operator $T$  defined
on $L^p$ by (\ref{multip}) is not similar to a multiplication operator $Sf=h\cdot f$.
\end{prop}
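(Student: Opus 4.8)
The plan is to argue by contradiction: suppose $T=V^{-1}SV$ for some invertible $V$ and some multiplication operator $Sf=h\cdot f$ (on some $L^p$ space). First I would reduce to the statement that $T$ is a \emph{scalar-type spectral operator with a uniformly bounded resolution of the identity}. Since $T$ is doubly power-bounded, so is $S$; as $\|S^n\|=\|h\|_\infty^n$ and $\|S^{-n}\|=\|1/h\|_\infty^n$, double power-boundedness forces $|h|=1$ a.e. Then $S$ is multiplication by a unimodular function, so $S=\int_{\mathbb T}\lambda\,E_S(d\lambda)$ with $E_S(B)f=1_{h^{-1}(B)}\cdot f$ a resolution of the identity satisfying $\|E_S(B)\|\le 1$ for every Borel $B$. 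Conjugating by $V$ and using $\sigma(T)=\sigma(S)$ (Lemma \ref{spectra}), $T$ is a scalar-type spectral operator with resolution $E(B):=V^{-1}E_S(B)V$ and $\sup_B\|E(B)\|\le \|V\|\,\|V^{-1}\|=:M$.

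Next I would identify the spectral projections of $T$ on the Fourier side. Since $Te_n=a_ne_n$ and the $a_n$ are \emph{all distinct}, the equation $Tf=a_nf$ forces $c_m(f)=0$ for $m\ne n$, so $\ker(T-a_nI)=\mathbb{C}e_n$. By the Dunford--Schwartz structure theory (a scalar-type spectral operator has no generalized eigenvectors, so $E(\{\lambda\})X=\ker(T-\lambda I)$), $E(\{a_n\})$ is the bounded idempotent onto $\mathbb{C}e_n$; the $E(\{a_n\})$ are mutually orthogonal with $E(\{a_n\})e_m=\delta_{nm}e_m$. Crucially, since $\{a_n:n\in\mathbb Z\}$ is countable, \emph{every} set $\{a_n:n\in A\}$ with $A\subseteq\mathbb Z$ is Borel, and $\sigma$-additivity of $E$ gives $E(\{a_n:n\in A\})=\sum_{n\in A}E(\{a_n\})$ in the strong operator topology. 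Evaluated at a trigonometric polynomial $f=\sum c_n(f)e_n$ this is a finite sum equal to the frequency restriction $P_Af:=\sum_{n\in A}c_n(f)e_n$. Hence $\|P_Af\|_p\le M\|f\|_p$ for every trigonometric polynomial $f$ and every $A\subseteq\mathbb Z$, with $M$ \emph{independent of $A$}.

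The remaining step is to contradict this uniform bound via the classical failure of uniform $L^p$-boundedness of rough frequency restrictions when $p\ne 2$. By duality (the transpose of $P_A$ on $L^{p'}$ is the frequency restriction $P_{-A}$) we may assume $1<p<2$. Given a trigonometric polynomial $f$, pick a finite $F\subseteq\mathbb Z$ containing its frequencies; for signs $(\varepsilon_n)_{n\in F}\in\{-1,1\}^F$ one has $\sum_{n\in F}\varepsilon_n c_n(f)e_n=(P_{F^+}-P_{F^-})f$ with $F^\pm:=\{n\in F:\varepsilon_n=\pm1\}$, so its $L^p$-norm is at most $2M\|f\|_p$. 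Averaging over the signs and combining Khintchine's inequality with Fubini gives
$$\|f\|_2=\Big(\sum_n|c_n(f)|^2\Big)^{1/2}\;\le\;C_p\,\Big(\E_\varepsilon\big\|\sum_{n}\varepsilon_n c_n(f)e_n\big\|_p^p\Big)^{1/p}\;\le\;2C_pM\,\|f\|_p$$
for all trigonometric polynomials $f$. By density this would yield a bounded embedding $L^p(\mathbb T)\hookrightarrow L^2(\mathbb T)$, which is impossible for $p<2$ on the probability space $\mathbb T$. This contradiction shows $T$ is not similar to a multiplication operator.

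I expect the crux to be the middle step: recognizing that similarity to a multiplication operator forces \emph{all} the rough frequency-restriction multipliers $P_A$ to be uniformly bounded on $L^p(\mathbb T)$. Once that is seen, the contradiction is the standard Littlewood--Paley fact and is routine. The points requiring care are the invocation of the spectral-operator structure theory (to know $E(\{a_n\})$ is genuinely rank one onto $\mathbb{C}e_n$, with no nilpotent part) and the small but essential observation that subsets of a countable subset of $\mathbb{C}$ are Borel, so that $E$ can be evaluated on all of them.
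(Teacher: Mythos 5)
Your proposal is correct, but it takes a genuinely different --- and considerably heavier --- route than the paper's. The proof in Appendix B is elementary: assuming $S=VTV^{-1}$ with $Sf=h\cdot f$, it sets $f_n:=Ve_n$ and notes $Sf_n=a_nf_n=hf_n$, so $h\equiv a_n$ a.e.\ on the support of $f_n$; since the $a_n$ are distinct, the supports are pairwise disjoint, whence $\|f_1+\dots+f_n\|_p^p=\sum_{i\le n}\|f_i\|_p^p\asymp n$, while similarity forces $\|f_1+\dots+f_n\|_p^p\asymp\|e_1+\dots+e_n\|_p^p$, which is $\ge n^{p/2}$ for $p>2$ and $\le n^{p/2}$ for $p<2$ --- contradicting linear growth in either case. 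It uses no spectral theory, no multiplier theory, and not even the double power-boundedness of $T$. You instead transport the uniformly bounded, SOT countably additive resolution of the identity of the multiplication operator through $V$, identify $E(\{a_n\})$ as the rank-one projection onto $\C e_n$ (here distinctness gives $\ker(T-a_nI)=\C e_n$, and the identity $E(\{\lambda\})X=\ker(T-\lambda I)$ for scalar-type spectral operators is standard), conclude that all rough frequency restrictions $P_A$, $A\subseteq\Z$, are uniformly bounded on $L^p(\T)$, and refute this by Khintchine plus duality; the final density step (e.g.\ via Fej\'er means and weak compactness in $L^2$) is routine, and your detour through $|h|=1$ is actually unnecessary, since any bounded multiplication operator is already scalar-type spectral with resolution bounded by $1$. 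What your approach buys is a formally stronger statement --- $T$ of \eqref{multip} is not similar to any operator carrying a uniformly bounded projection-valued measure separating the $a_n$, i.e.\ not similar to any scalar-type spectral operator --- and it parallels the spectral-operator argument used in the main text for Proposition \ref{gill}. What it costs is precisely the Dunford spectral-operator machinery (and the classical failure of uniform boundedness of idempotent Fourier multipliers on $L^p$, $p\ne2$) that the appendix was written to avoid; see the first remark following Proposition \ref{gill}, which is the raison d'\^etre of Cuny's short argument.
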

\begin{proof}
By contradiction, assume that there exist invertible operators $S$ and $V$ such that $Sf=hf$ and 
$S=VTV^{-1}$. 
\smallskip

For $n\in \Z$, set $f_n:=Ve_n$ and put $A_n:= support (f_n)$. Then
$$
Sf_n=VTe_n=a_n f_n=hf_n , \quad n \in \N .
$$
Hence $h \equiv a_n$ on $A_n$ ($\mu$-almost everywhere).  Since $a_n\neq a_m$ for $m\neq n\in \Z$  
by assumption, $\mu(A_n\cap A_m)=0$. The sets $(A_n)_{n\in \Z}$ are therefore disjoint (modulo $\mu$). 
It follows that 
\begin{gather*}
\|f_1+\ldots +f_n\|_p^p =\sum_{i=1}^n \|f_i\|_p^p\, .
\end{gather*}
For $C=\max\{\|V\|^p,\|V^{-1}\|^p \}$ we have 
\begin{gather*}
\|e_1+\ldots +e_n\|_p^p/C\le \|f_1+\ldots +f_n\|_p^p \le C \|e_1+\ldots +e_n\|_p^p\, ,
\end{gather*}
and
\begin{gather*}
n/C= \sum_{i=1}^n \|e_i\|_p^p/C\le \sum_{i=1}^n \|f_i\|_p^p\le C\sum_{i=1}^n \|e_i\|_p^p=Cn\, .
\end{gather*}
We obtain  from the above that
$  n/C^2 \le \|e_1+\ldots +e_n\|_p^p\le C^2n$, which yields a contradiction, since
$\|e_1+\ldots +e_n\|_p^p\ge \|e_1+\ldots +e_n\|_2^p=n^{p/2}$ when $p>2$, and 
\newline\noindent
$\|e_1+\ldots +e_n\|_p^p\le \|e_1+\ldots +e_n\|_2^p=n^{p/2}$  when $1<p<2$. 
\end{proof}

\bigskip

{\bf Acknowlegements.} The authors are grateful to Guy Cohen, Christophe Cuny,
 M\'at\'e Wierdl and Manfred Wolff for several helpful discussions.
\bigskip

\end{document}